        \crefname{subsection}{Subsection}{Subsections}
        \crefname{subsection}{Subsection}{Subsections}
        \tikzset{every picture/.style = {line width=1pt}}
        \tikzset{vertex/.style = {shape=circle,draw,fill=white,inner sep=1pt, outer sep=1pt}}
        \tikzset{state/.style = {shape=ellipse,draw,inner sep=1pt, outer sep=1pt}}
        \tikzset{edge/.style = {->,> = latex'}}
    \theoremstyle{plain}
        \newtheorem{theorem}{Theorem}[section]
        \newtheorem*{witzel}{Witzel's Theorem}
        \newtheorem*{main}{Main Theorem}
        \newtheorem{corollary}[theorem]{Corollary}
        \newtheorem{lemma}[theorem]{Lemma}
        \newtheorem{proposition}[theorem]{Proposition}
    \theoremstyle{definition}
        \newtheorem{definition}[theorem]{Definition}
        \newtheorem{example}[theorem]{Example}
        \newtheorem{assumption}[theorem]{Assumption}
    \theoremstyle{remark}
        \newtheorem{remark}[theorem]{Remark}
        \newtheorem{question}[theorem]{Question}
    \newtheoremstyle{named}{}{}{\itshape}{}{\bfseries}{.}{.5em}{\thmnote{#3's }#1} 
        \theoremstyle{named}
    \title{Eventually Self-Similar Groups acting on Fractals}
    \author{Davide Perego \and Matteo Tarocchi}
    \date{}
    \thanks{Both authors are members of the Gruppo Nazionale per le Strutture Algebriche, Geometriche e le loro Applicazioni (GNSAGA) of the Istituto Nazionale di Alta Matematica (INdAM).\\
    The first author acknowledges support from the Grant QUALIFICA by Junta de Andalucía grant number QUAL21 005 USE, from the research grant PID2022-138719NA-I00 (Proyectos de Generación de Conocimiento 2022) financed by the Spanish Ministry of Science and Innovation, from the Swiss Government Excellence Scholarship and from Swiss NSF grant 200020-200400.}
    \address{Section de mathématiques, Université de Genève, rue du Conseil-Général 7-9, 1205 Genève, Switzerland}
    \email{\href{mailto:dperego9@gmail.com}{dperego9@gmail.com}}
    \address{Dipartimento di Matematica e Applicazioni, Universit\`a degli Studi di Milano-Bicocca, Ed. U5, Via R.Cozzi 55, 20125 Milano, Italy, EU}
    \email{\href{mailto:matteo.tarocchi.math@gmail.com}{matteo.tarocchi.math@gmail.com}}
    \newcommand{\f}{\mathfrak{f}}
    \newcommand{\F}{\mathfrak{F}}
    \newcommand{\g}{\mathfrak{g}}
    \newcommand{\G}{\mathfrak{G}}
    \newcommand{\R}{\mathcal{R}}
    \newcommand{\C}{\mathcal{C}}
    \newcommand{\E}{\mathcal{E}}
    \newcommand{\dpt}{\mathrm{D}}
    \newcommand{\Aut}{\mathrm{Aut}}
    \newcommand{\Iso}{\mathrm{Iso}}
    \newcommand{\id}{\mathrm{id}}
\begin{document}

\begin{abstract}
Generalizing work by Belk and Forrest, we develop almost expanding hyperedge replacement systems that build fractal topological spaces as quotients of edge shifts under certain ``gluing'' equivalent relations.
We define ESS groups, which are groups of homeomorphisms of these spaces that act as a finitary asynchronous transformations followed by self-similar ones, akin to the action of Scott-R\"{o}ver-Nekrashevych groups on the Cantor space.
We provide sufficient conditions for finiteness properties of such groups, which allow us to show that the airplane and dendrite rearrangement groups have type $F_\infty$, that a group combining dendrite rearrangements and the Grigorchuk group is finitely generated, and that certain ESS groups of edge shifts have type $F_\infty$ (partially addressing a question of Deaconu), in addition to providing new proofs of previously known results about several Thompson-like groups.
\end{abstract}

\maketitle


\section*{Introduction}

Scott-R\"{o}ver-Nekrashevych groups (developed in different times by multiple authors \cite{Scott,Rover,NekrashevychGroups}) are groups of certain homeomorphisms of the Cantor space that have a ``Thompson-like'' finitary asynchronous action followed by a self-similar action.
These are fascinating and exotic groups, for example due to their notable application \cite{SWZ19} as a source of simple groups with arbitrary finiteness properties.
Finiteness properties represent a current challenge in the field, as shown by the presence of papers devoted to the study of single such groups, such as \cite{RoverFinfty}, and by questions such as one in the Introduction of \cite{ExpandingMaps}, where Nekrashevych asks whether Scott-R\"{o}ver-Nekrashevych groups associated to iterated monodromy groups (introduced in \cite{IMG}, see also \cite[Chapter 5]{NekSSG}) have type $F_\infty$.

The family of \textit{rearrangement groups of fractals}, introduced in \cite{BF19}, is another family of ``Thompson-like" groups which recently gained interest (see e.g. \cite{QS_BF,BasilicaDense,IG,graphdiagramgroups,TBnotfinpres}) and provides a wide framework for understanding Thompson-like homeomorphisms of a variety of spaces.
Theorem 4.1 of \cite{BF19} provides sufficient conditions for the finiteness property $F_\infty$ of such groups, though many well-known groups of type $F_\infty$ belonging to this class do not satisfy it.

We generalize the construction of \textit{limit spaces} of edge replacement systems from \cite{BF19} in two distinct directions:
by considering hypergraphs instead of graphs and by relaxing the so-called ``expanding'' condition on the replacement system.
The former (already suggested in \cite{QS_BF}) allows us to build spaces whose self-similar pieces intersect at more than two points, such as the Sierpinski triangle and the Apollonian gasket (\cref{fig.sierpinski.triangle.and.apollonian.gasket}), while the latter allows the inclusion of isolated points, which is useful to realize natural spaces on which groups such as $QF$, $QT$, $QV$ and the Houghton groups act. The resulting spaces are \textit{finitely ramified fractals} in the sense of \cite{QS_BF}.
In \cref{sec.replacement.systems,,sec.topology} we rework some proofs about replacement systems in this larger setting of almost expanding hyperedge replacement systems.
Some results easily adapt, such as the rationality of the gluing relation proved in \cite{rationalgluing}, as we explain in \cref{sub.rational.gluing}.

Then, in \cref{sec.ESS}, we define eventually self-similar groups of homeomorphisms of these spaces (\textit{ESS groups}, in short), akin to the Scott-R\"{o}ver-Nekrashevych groups: they have a ``Thompson-like'' finitary asynchronous action followed by a self-similar action determined by a \textit{self-similar tuple}.
This is a tuple of groups, one for each color (type), such that the restriction of any element belongs to one of the groups, naturally generalizing self-similar groups to the setting of edge shifts.
A similar construction was described in \cite{SelfSimilarGroupoid}, with a significant difference: we never allow isomorphisms between cones of different colors (types).
The self-similar groupoid from \cite{SelfSimilarGroupoid} becomes a self-similar tuple when there are no homeomorphisms between cones of different colors (types), in which case the groups introduced in \cite{Deaconu} coincide with our eventually self-similar groups on edge shifts.

The family of ESS groups include rearrangement groups, for which the self-similar action is trivial, together with whole groups of homeomorphisms of certain fractals, such as the groups of homeomorphisms of the Sierpinski triangle and of the Apollonian gasket (see \cref{sub.triangle.gasket.homeo.groups}), and many new groups, such as an analogous of R\"{o}ver group acting on dendrites, where the self-similar action descends from Grigorchuk group (see \cref{sub.dendrites}).
Moreover, the family of ESS groups include the Scott-R\"{o}ver-Nekrashevych groups \cite{Scott,Rover,NekrashevychGroups} (and similar construction on edge shifts \cite{Deaconu}), the Houghton groups \cite{Houghton1978TheFC} and the Thompson-like groups $QF$, $QT$ and $QV$ \cite{QV}.
We suspect that, with a suitable metric, for connected limit spaces, ESS groups are groups of quasisymmetries (see \cite{QS_BF}).

We bring together strategies from \cite{SWZ19} and \cite{BF19} and we apply a general machinery developed in \cite{Witzel} to determine a general sufficient condition for finiteness properties of ESS groups, proving the following result.

\begin{main}
For nice enough replacement systems, ESS groups inherit finiteness properties from their building blocks.
More precisely, for an almost expanding replacement system $\R$ and a compatible self-similar tuple $\mathbb{G}$, assume that for every $m \in \mathbb{N}$ all but finitely many hypergraph expansions of $\R$ admit at least $m$ parallel simple $\pi$-contractions.
If the groups of the self-similar tuple have type $F_m$, then the ESS group $E_\R\mathbb{G}$ has type $F_k$, where $k$ depends on $m$ and $\R$.
In particular, if the replacement system is $\infty$-contractive and the groups of $\mathbb{G}$ have type $F_\infty$, then $E_\R\mathbb{G}$ has type $F_\infty$.
\end{main}

Here, a $\pi$-contraction is the inverse of a hyperedge expansion that may change the orientation of hyperedges, depending on the action of the self-similar tuple.

With $\pi$-contractions we will be able to circumvent a problem of \cite[Theorem 4.1]{BF19} described in \cite[Remark 4.6]{BF19}.
Thus, our theorem applies to certain rearrangement groups that do not satisfy the hypotheses of \cite[Theorem 4.1]{BF19}.
Moreover, our theorem applies to many new ESS groups of homeomorphisms of dendrites, as described in \cref{sub.dendrites}:
for every $n \geq 3$, one can build such a group $E_{\mathcal{D}_n}G$ acting on the Wa\.zewski dendrite $D_n$ for any self-similar group $G$ on the $(n-1)$-rooted tree. Such groups are still largely mysterious and it is the authors' opinion that they may provide interesting exotic examples.
We also show that many ESS groups on edge shifts inherit their finiteness properties from the groups of the self-similar tuple, which provides a partial answer to questions posed in \cite{Deaconu} after Corollary 5.5.

We observe two limits of these techniques.
On one hand, stating a general theorem about such a large family of groups requires sacrificing precision on evaluating the connectivity of the complexes, as showed by the Houghton groups in \cref{cor.example.known.results}(D).
On the other hand, there are Scott-R\"{o}ver-Nekrashevych groups with strong finiteness properties associated to self-similar groups with weak finiteness properties.
For example, the R\"{o}ver group from \cite{Rover} is the ESS group on the full shift $\{0,1\}^\omega$ with the Grigorchuk group as self-similar group.
It has type $F_\infty$ as shown in \cite{RoverFinfty}, even if the Grigorchuk group is not finitely presented.
For these reasons, we believe that a more detailed investigation of specific examples (such as the ones described in \cref{sub.dendrites}) could lead to a deeper understanding of interesting groups and a more general strategy for the study of finiteness properties.


\section{Background on Directed Hypergraphs and Edge Shifts}

Let us first introduce what we need about hypergraphs and edge shifts.

\subsection{Directed Hypergraphs and \texorpdfstring{$\pi$}{pi}-isomorphisms}

In this subsection we introduce the generalized notion of graph that we are going to use.

\subsubsection{Basic Definitions and Assumptions}

\begin{definition}
A \textbf{hypergraph} is a quintuple $(V,E,\lambda,C,c)$ where
\begin{itemize}
    \item $V$ is a set whose elements are called \textbf{vertices};
    \item $E$ is a set whose elements are called \textbf{hyperedges}, each with a given \textbf{order}, which is $\mathrm{ord}(e) \in \mathbb{N}_{\geq 2}$;
    \item $\lambda$ associates to each hyperedge $e$ a map $\lambda_e \colon \{1, \dots, \mathrm{ord}(e)\} \to V$, each of whose images is called a \textbf{boundary vertex} of $e$;
    \item $c \colon E \to C$ is an assignment of a \textbf{color} $c(e) \in C$ to each hyperedge.
\end{itemize}
If $\mathrm{ord}(e) = d$, we say that $e$ is a \textbf{$\boldsymbol{d}$-hyperedge}.
A hypergraph whose hyperedges all have order $2$ is called a \textbf{graph}.
\end{definition}

For example, \cref{fig.apollonian.gasket.expansion} portrays a hypergraph with four $3$-edges.
Note that, unlike graphs, picture alone fail to clarify the actual orientation of hyperedges.

If $c(e) = k \in C$ we say that $e$ is \textbf{$\boldsymbol{k}$-colored} or that it is \textbf{colored by $\boldsymbol{k}$}.
When $c(E)$ is a singleton, we can safely omit colors from the notation.

In general, for a hypergraph $\Gamma$ we denote by $V_\Gamma$, $E_\Gamma$ and $\lambda^\Gamma$ the sets of vertices and hyperedges and the boundary map of $\Gamma$, respectively.

Moreover, $\Gamma_2$ is a \textbf{subhypergraph} of $\Gamma_1$ if $V_{\Gamma_2} \subseteq V_{\Gamma_1}$, $E_{\Gamma_2} \subseteq E_{\Gamma_1}$ and the boundary and coloring maps of $\Gamma_2$ are the restrictions to $E_{\Gamma_2}$ of those of $\Gamma_1$.

\begin{assumption}
\label{ass.hypergraphs}
Throughout this work, we assume the following facts.
\begin{enumerate}
    \item Hypergraphs are \textbf{finite}, i.e., their sets of vertices and hyperedges are finite.
    \item Hypergraphs do not have \textbf{isolated vertices}, which are vertices that are not the boundary of any hyperedge.
\end{enumerate}
\end{assumption}

\begin{definition}
\label{def.incident.adjacent.isolated}
We say that an hyperedge $e$ is
\begin{itemize}
    \item \textbf{incident} on a vertex $v$ is $v$ is a boundary vertex of $e$;
    \item \textbf{adjacent} to an hyperedge $f$ if $e$ and $f$ share some boundary vertex;
    \item \textbf{isolated} if $e$ is not adjacent to any other vertex.
\end{itemize}
\end{definition}

\subsubsection{Hypergraph \texorpdfstring{$\pi$}{pi}-isomorphisms}

The notion of isomorphisms in the context of directed hypergraphs is the following.

\begin{definition}
Given two hypergraphs $A$ and $B$, a \textbf{hypergraph isomorphism} $\f \colon A \to B$ is a collection $(\f_V,\f_E)$ of two bijections $ \f_V \colon V_A \to V_B \text{ and } \f_E \colon E_A \to E_B $ that enjoy the following properties:
\begin{enumerate}
    \item the maps $\mathrm{ord}$ and $\f_E$ commute (i.e., $\f_E$ preserves the order of hyperedges);
    \item $\f_V (\lambda_e(i)) = \lambda_{\f_E(e)}(i)$ for all $i = 1, \ldots, \mathrm{ord}(e)$, for each $e \in E_A$ (i.e., the maps $\f_E$ and $\f_V$ preserve adjacency).
\end{enumerate}
\end{definition}

Note that, thanks to the second point of \cref{ass.hypergraphs}, each hypergraph isomorphism is entirely determined solely by the map $\f_E$.

Certain maps that we consider are almost hypergraph isomorphisms:
they are bijection of hyperedges that preserve adjacency up to some permutation of the boundary vertices at each hyperedge.
This is formalized by the following definition.

\begin{definition}
\label{def.pi.isomorphism}
Given two hypergraphs $A$ and $B$, a \textbf{$\boldsymbol{\pi}$-hypergraph isomorphism} (or simply \textbf{$\boldsymbol{\pi}$-isomorphism}) $\f \colon A \to B$ is a collection $(\f_V,\f_E,\pi)$ of two bijections
$ \f_V \colon V_A \to V_B \text{ and } \f_E \colon E_A \to E_B $
together with a map
\[ \pi \colon E_A \to \bigcup_{n \leq 1} \mathrm{Sym}(n),\, e \mapsto \pi_e \in \mathrm{Sym}(\mathrm{ord}(e)) \]
that enjoy the following properties:
\begin{enumerate}
    \item the maps $\mathrm{ord}$ and $\f_E$ commute (i.e., $\f_E$ preserves the order of hyperedges);
    \item $\f_V (\lambda^A_e(i)) = \lambda^B_{\f_E(e)}({\pi_e(i)})$ for all $i = 1, \ldots, \mathrm{ord}(e)$, for all $e \in E_A$ (i.e., the maps $\f_E$ and $\f_V$ preserve adjacency up to the permutations of the boundary vertices determined by $\pi$).
\end{enumerate}
\end{definition}

As examples, \cref{fig.dendrite.pi.isomorphism} depicts a $\pi$-graph isomorphism whose action simply switches the orientation of the edge named $d$ and \cref{fig.triangle.pi.isomorphism} depicts a $\pi$-hypergraph automorphism that essentially reflects around the vertical axis.

\begin{figure}
\centering
\begin{tikzpicture}
    \begin{scope}[xshift=-2.75cm]
    \node[vertex] (ll) at (-1.8,0) {};
    \node[vertex] (lc) at (-.6,0) {};
    \node[vertex] (lt) at (-.6,1.2) {};
    \node[vertex] (lb) at (-.6,-1.2) {};
    \node[vertex] (rc) at (.6,0) {};
    \node[vertex] (rt) at (.6,1.2) {};
    \node[vertex] (rb) at (.6,-1.2) {};
    \node[vertex] (rr) at (1.8,0) {};
    \draw[edge] (lc) to node[above]{$a$} (ll);
    \draw[edge] (lc) to node[above left]{$b$} (lt);
    \draw[edge] (lc) to node[left]{$c$} (lb);
    \draw[edge] (lc) to node[above]{$d$} (rc);
    \draw[edge] (rc) to node[above right]{$e$} (rt);
    \draw[edge] (rc) to node[right]{$f$} (rb);
    \draw[edge] (rc) to node[above]{$g$} (rr);
    \end{scope}
    \draw[-to] (-.15,0) -- node[above]{$\f$} (.15,0);
    \begin{scope}[xshift=2.75cm]
    \node[vertex] (ll) at (-1.8,0) {};
    \node[vertex] (lc) at (-.6,0) {};
    \node[vertex] (lt) at (-.6,1.2) {};
    \node[vertex] (lb) at (-.6,-1.2) {};
    \node[vertex] (rc) at (.6,0) {};
    \node[vertex] (rt) at (.6,1.2) {};
    \node[vertex] (rb) at (.6,-1.2) {};
    \node[vertex] (rr) at (1.8,0) {};
    \draw[edge] (lc) to node[above]{$\f(a)$} (ll);
    \draw[edge] (lc) to node[above left]{$\f(b)$} (lt);
    \draw[edge] (lc) to node[left]{$\f(c)$} (lb);
    \draw[edge] (rc) to node[above]{$\f(d)$} (lc);
    \draw[edge] (rc) to node[above right]{$\f(e)$} (rt);
    \draw[edge] (rc) to node[right]{$\f(f)$} (rb);
    \draw[edge] (rc) to node[above]{$\f(g)$} (rr);
    \end{scope}
    \node[align=center] at (6.3,0) {$\pi_d = (1\,2)$\\$\pi_i = \mathrm{id},\, \forall i \neq d$};
\end{tikzpicture}
\caption{A $\pi$-graph isomorphism.}
\label{fig.dendrite.pi.isomorphism}
\end{figure}
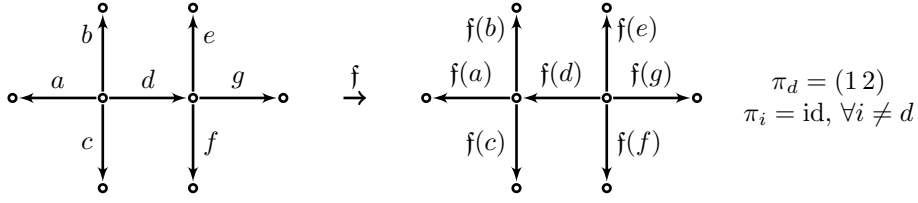

\begin{figure}
\centering
\begin{tikzpicture}
    \begin{scope}[xshift=-2.5cm]
    \node[vertex] (B3) at (90:1.8) {}; \draw (B3) node[left]{$t$};
    \node[vertex] (B1) at (210:1.8) {}; \draw (B1) node[below]{$l$};
    \node[vertex] (B2) at (330:1.8) {}; \draw (B2) node[below]{$r$};
    \node[vertex] (y) at ($(B3)!.5!(B1)$) {}; \draw (y) node[left]{$y$};
    \node[vertex] (z) at ($(B2)!.5!(B1)$) {}; \draw (z) node[below]{$z$};
    \node[vertex] (x) at ($(B2)!.5!(B3)$) {}; \draw (x) node[right]{$x$};
    \fill[black,opacity=.25] (B1.center) -- (z.center) -- (y.center) -- cycle; \node at (210:.9) {$L$};
    \draw (B1) to (z);
    \draw (z) to (y);
    \draw (y) to (B1);
    \fill[black,opacity=.25] (z.center) -- (B2.center) -- (x.center) -- cycle; \node at (330:.9) {$R$};
    \draw (z) to (B2);
    \draw (B2) to (x);
    \draw (x) to (z);
    \fill[black,opacity=.25] (y.center) -- (x.center) -- (B3.center) -- cycle; \node at (90:.9) {$T$};
    \draw (y) to (x);
    \draw (x) to (B3);
    \draw (B3) to (y);
    \end{scope}
    \draw[-to] (-.15,.45) -- node[above]{$\g$} (.15,.45);
    \begin{scope}[xshift=2.5cm]
    \node[vertex] (B3) at (90:1.8) {}; \draw (B3) node[left]{$\g(t)$};
    \node[vertex] (B1) at (210:1.8) {}; \draw (B1) node[below]{$\g(r)$};
    \node[vertex] (B2) at (330:1.8) {}; \draw (B2) node[below]{$\g(l)$};
    \node[vertex] (y) at ($(B3)!.5!(B1)$) {}; \draw (y) node[left]{$\g(x)$};
    \node[vertex] (z) at ($(B2)!.5!(B1)$) {}; \draw (z) node[below]{$\g(z)$};
    \node[vertex] (x) at ($(B2)!.5!(B3)$) {}; \draw (x) node[right]{$\g(y)$};
    \fill[black,opacity=.25] (B1.center) -- (z.center) -- (y.center) -- cycle; \node at (210:.9) {$\g(R)$};
    \draw (B1) to (z);
    \draw (z) to (y);
    \draw (y) to (B1);
    \fill[black,opacity=.25] (z.center) -- (B2.center) -- (x.center) -- cycle; \node at (330:.9) {$\g(L)$};
    \draw (z) to (B2);
    \draw (B2) to (x);
    \draw (x) to (z);
    \fill[black,opacity=.25] (y.center) -- (x.center) -- (B3.center) -- cycle; \node at (90:.9) {$\g(T)$};
    \draw (y) to (x);
    \draw (x) to (B3);
    \draw (B3) to (y);
    \end{scope}
    \node[align=center] at (6,.45) {$\pi_L=(1\,2)$\\$\pi_R=(1\,2)$\\$\pi_T=(1\,2)$};
    \node[align=center] at (-2.5,-2.25) {$L=(l,z,y)$\\$R=(z,r,x)$\\$T=(y,x,t)$};
    \node[align=center] at (2.5,-2.25) {$\g(L)=(\g(l),\g(z),\g(y))$\\$\g(R)=(\g(z),\g(r),\g(x))$\\$\g(T)=(\g(y),\g(x),\g(t))$};
\end{tikzpicture}
\caption{A $\pi$-hypergraph automorphism.}
\label{fig.triangle.pi.isomorphism}
\end{figure}
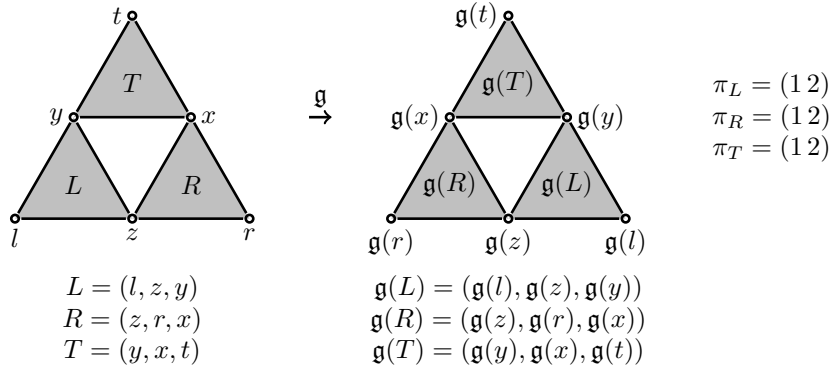

Note that, when $\pi_e$ is trivial for all $e$, a $\pi$-hypergraph isomorphism is a hypergraph isomorphism.
The converse almost holds, but not entirely:
in the presence of hyperedges $e$ with repeated boundary vertices (such as loops in graphs), any $\pi_e$ that permutes the repeated vertices and is otherwise trivial corresponds to the same hypergraph isomorphism.

\begin{proposition}
\label{prop.pi.isomorphism.composition}
Given a $\sigma$-hypergraph isomorphism $f \colon B \to C$ and a $\tau$-hypergraph isomorphism $g \colon A \to B$, their composition $f \circ g$ is a $\pi$-hypergraph isomorphism $A \to C$ with $\pi_e \coloneq \sigma_{g(e)} \circ \tau_e$.
\end{proposition}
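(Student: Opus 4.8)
The plan is to verify directly that the triple $(f_V \circ g_V,\, f_E \circ g_E,\, \pi)$ with $\pi_e \coloneq \sigma_{g_E(e)} \circ \tau_e$ satisfies the two defining properties of a $\pi$-hypergraph isomorphism from \cref{def.pi.isomorphism}, where I write $g = (g_V, g_E, \tau)$ and $f = (f_V, f_E, \sigma)$ for the components of the two given maps. First I would observe that $f_V \circ g_V \colon V_A \to V_C$ and $f_E \circ g_E \colon E_A \to E_C$ are bijections, being composites of bijections, so the data are at least of the right shape.

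Next I would check that $\pi$ is well-defined and that property (1) holds. Since $g_E$ preserves the order of hyperedges, $\mathrm{ord}(g_E(e)) = \mathrm{ord}(e)$, so both $\sigma_{g_E(e)}$ and $\tau_e$ lie in $\Sym(\mathrm{ord}(e))$ and their composite $\pi_e$ is again an element of $\Sym(\mathrm{ord}(e))$, as required by the definition. Property (1) for $f_E \circ g_E$ is then immediate, since each of $f_E$ and $g_E$ preserves order.

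The core of the argument is property (2), which I would establish by a single chain of equalities. For $e \in E_A$ and $i \in \{1, \dots, \mathrm{ord}(e)\}$, applying property (2) for $g$ and then property (2) for $f$ (the latter at the hyperedge $g_E(e)$ and index $\tau_e(i)$) yields
\[ (f_V \circ g_V)(\lambda^A_e(i)) = f_V\big(\lambda^B_{g_E(e)}(\tau_e(i))\big) = \lambda^C_{f_E(g_E(e))}\big(\sigma_{g_E(e)}(\tau_e(i))\big) = \lambda^C_{(f_E \circ g_E)(e)}(\pi_e(i)), \]
which is exactly the identity demanded of $f \circ g$ with the claimed permutation $\pi_e$.

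The only point requiring care—what I would flag as the main, if minor, obstacle—is the index bookkeeping: property (2) for $f$ must be invoked at the \emph{shifted} index $\tau_e(i)$ rather than at $i$, and it is precisely the order-preservation of $g_E$ established in the second step that guarantees $\sigma_{g_E(e)}$ and $\tau_e$ act on the same set $\{1, \dots, \mathrm{ord}(e)\}$ and can therefore be composed at all. Everything else in the verification is formal.
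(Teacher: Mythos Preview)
Your proof is correct and follows essentially the same approach as the paper: both argue that the component maps are bijections, that order is preserved, and then verify property (2) by the chain $(f_V \circ g_V)(\lambda^A_e(i)) = f_V(\lambda^B_{g_E(e)}(\tau_e(i))) = \lambda^C_{f_E(g_E(e))}(\sigma_{g_E(e)}(\tau_e(i)))$. If anything, your version is slightly more explicit in checking that $\pi_e$ lands in the correct symmetric group.
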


\begin{proof}
Clearly $f_V \circ g_V \colon V_A \to V_C$ and $f_E \circ g_E \colon E_A \to E_C$ are bijections, as they are compositions of bijections.
Since $f_E$ and $g_E$ commute with $\mathrm{ord}$, their composition does too.

Let $e \in E_A$.
Using the point (2) of \cref{def.pi.isomorphism}, we have that
\[ f_V \circ g_V \circ \lambda^A_e(i) = f_V \circ \lambda^B_{g_E(e)} \circ \tau_e(i) = \lambda^C_{f_E \circ g_E(e)} \circ \sigma_{g_E (e)} \circ \tau_e(i), \]
which is precisely what we needed in order to prove that $f \circ g$ is a $\pi$-hypergraph isomorphism for the map $\pi_e = \sigma_{g_E(e)} \circ \tau_e$.
\end{proof}

\subsection{Edge Shifts}
\label{sub.edge.shifts}

Here we briefly recall the basics of (one-sided) \textit{edge shifts}, which are spaces made of certain infinite sequences of symbols, arising from symbolic dynamics.

\begin{definition}
\label{def.edge.shift}
Let $\Gamma = (V,E,\lambda)$ be a finite graph.
\begin{itemize}
    \item The \textbf{edge shift} $\boldsymbol{\Omega(\Gamma)}$ is the set of all infinite walks on $\Gamma$, i.e.,
    \[ \Omega(\Gamma) \coloneq \{ e_1 e_2 \dots \mid \lambda_{e_n}(2) = \lambda_{e_{n+1}}(1) \}. \]
    \item For a vertex $s \in V$, the \textbf{initial edge shift} $\boldsymbol{\Omega_s(\Gamma)}$ is the set of infinite walks on $\Gamma$ starting at $s$, i.e.,
    \[ \Omega_s(\Gamma) \coloneq \{ e_1 e_2 \dots \mid \lambda_{e_1}(1) = s \text{ and } \lambda_{e_n}(2) = \lambda_{e_{n+1}}(1) \}. \]
\end{itemize}
We refer to the set $E$ of edges of $\Gamma$ as the \textbf{alphabet} of $\Omega_s(\Gamma)$ and $\Omega(\Gamma)$ and we will often denote it by $\mathbb{A}$.
The \textbf{language} of $\Omega(\Gamma)$, usually denoted by $\boldsymbol{\mathbb{L}}$, is the set of finite walks on $\Gamma$ and the \textbf{language} $\boldsymbol{\mathbb{L}_s}$ of $\Omega_s(\Gamma)$ is the set of those finite walks on $\Gamma$ that start at $s$.
\end{definition}

\begin{example}
\label{ex.full.shift}
Given $n \geq 2$, consider a graph $\Gamma=(V,E,\lambda)$ with $V=\{s\}$ and $E=\{e_1 \dots e_n\}$ (thus $\lambda_e(1)=\lambda_e(2)=s$ for all $e \in E$).
The \textbf{full shift} on $n$ elements is the edge shift on $\Gamma$ starting at $s$, and we denote it by $\boldsymbol{E^\omega}$.
Observe that it simply corresponds to the set
\[ E^\omega = \{ e_1 e_2 \dots \mid e_i \in E \} \]
of all infinite words in the alphabet $E$.
\end{example}

\begin{example}
\label{ex.bad.shift}
Consider the graph $\Gamma$ depicted in \cref{fig.bad.shift.graph}.
The edge shift $\Omega(\Gamma)$ is the set of all infinite words in the alphabet $\{a,b,c,d\}$ such that each $a$, $b$ and $d$ is followed by either $c$ or $d$ and each $c$ is followed by either $a$ or $b$.
The initial shift $\Omega_{\text{\textcolor{red}{red}}}(\Gamma)$ is the subset of $\Omega(\Gamma)$ consisting of those words that start either $c$ or $d$.
\end{example}

\begin{figure}
\centering
\begin{tikzpicture}
    \node[vertex] (b) at (-1.333,0) {\textcolor{blue}{blue}};
    \node[vertex] (r) at (1.333,0) {\textcolor{red}{red}};
    \draw[edge] (b) to[out=45,in=135,looseness=.9] node[above]{$a$} (r);
    \draw[edge] (b) to[out=0,in=180] node[above]{$b$} (r);
    \draw[edge] (r) to[out=225,in=-45,looseness=.9] node[above]{$c$} (b);
    \draw[edge] (r) to[out=30,in=330,looseness=7.5] node[right]{$d$} (r);
\end{tikzpicture}
\caption{A graph for the edge shift described in \cref{ex.bad.shift}.}
\label{fig.bad.shift.graph}
\end{figure}
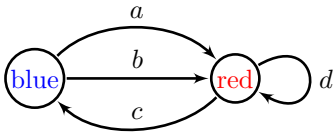

$E^\omega$ is the product of countably many copies of $E$.
When endowed with the product topology, the full shift $E^\omega$ is famously a \textbf{Cantor space} (i.e., a perfect, compact, totally disconnected, and metrizable space).
In general, edge shifts are compact, metrizable and totally disconnected, but they may not be perfect.

\begin{definition}
\label{def.cones}
Fix an edge shift.
Given an element $x$ of its language, its \textbf{cone} is the subspace $T_x$ of those sequences of the edge shift that have $x$ as a prefix.
\end{definition}

The set of cones of an edge shift is a basis of clopen sets for its topology.
When an edge shift is a Cantor space (i.e., when it does not feature isolated points), cones are Cantor spaces themselves.
Otherwise there are cones which consist of a sole isolated point.

\begin{definition}
\label{def.cone.colors}
If $w = x_1 \dots x_n$ belongs to the language of an edge shift $\Omega(\Gamma)$, we say that its \textbf{color} is $\boldsymbol{c(w)} = \lambda_{x_n}(2)$.
Moreover, the homeomorphism
\[ \boldsymbol{C_w} \colon T_w \to \Omega_{c(w)}(\Gamma),\, x\alpha \mapsto \alpha \]
is called the \textbf{color map} of $w$.
\end{definition}
The presumed ambiguity between this notion of color and the one about hypergraphs will be clear later on.
In practice, for a shift $\Omega_s(\Gamma)$, the possible colors of an element of $\mathbb{L}_s$ correspond to the vertices of $\Gamma$ that are reachable from the vertex $s$.
The color map $C_w$ determines a bijection between the set of infinite sequences starting with $w$ and the space of suffixes that may follow $w$.

\begin{definition}
\label{def.canonical.homeomorphism}
Assume that two elements $x$ and $y$ of the language of $\Omega_s(\Gamma)$ have the same color.
Then the homeomorphism
\[ \chi_{x,y} = C_y^{-1} \circ C_x \colon T_x \to T_y,\, x\alpha \mapsto y\alpha \]
is referred to as the \textbf{canonical homeomorphism} between $T_x$ and $T_y$.
\end{definition}


\section{Limit Spaces of Hyperedge Replacement Systems}
\label{sec.replacement.systems}

In this section we define our class of rewriting systems, the \textit{hyperedge replacement systems} (\cref{sub.replacement.systems}).
Every such object determines an edge shift called \textit{symbol space} (\cref{sub.symbol.space}) and a \textit{gluing relation} (\cref{sub.gluing.relation}) on it.
The gluing relation is an equivalence relation when the replacement system satisfies certain natural conditions, in which case we say that it is \textit{expanding} and the quotient of the symbol space on the gluing relation is called \textit{limit space} (\cref{sub.limit.space}).

\subsection{Hyperedge Replacement systems}
\label{sub.replacement.systems}

\begin{definition}
A \textbf{replacement rule of order $\boldsymbol{d}$ and color $\boldsymbol{k}$} is a rewriting rule $e \xrightarrow{k,d} R$, where $e$ is a $k$-colored $d$-edge with pairwise distinct boundary vertices and $R$ is a hypergraph equipped with $d$ distinct vertices $\lambda_R(1), \dots, \lambda_R(d)$, which are called \textbf{boundary vertices} of $R$.
The hypergraph $R$ is called a \textbf{$\boldsymbol{k}$-colored replacement hypergraph of order $\boldsymbol{d}$}.
\end{definition}

Given a replacement rule $e \xrightarrow{k,d} R$ of order $d$ and color $k$, any $k$-colored $d$-edge $e$ of a hypergraph $\Gamma$ can be \textbf{expanded} by replacing it with the hypergraph $R$, identifying the boundary vertices $\lambda_e(i)$ of $e$ with the boundary vertices $\lambda_R(i)$ of $R$.
Performing such a replacement rule on $\Gamma$ is called an \textbf{hyperedge expansion} of $\Gamma$ and we denote the resulting hypergraph by $\Gamma \triangleleft e$.

\begin{definition}
\label{def.consistent.replacement.rules}
Given a fixed set of colors $C$, a set $\R$ of replacement rules is \textbf{consistent} if the following conditions are satisfied.
\begin{enumerate}
    \item For every $k \in C$, among all of the replacement hypergraphs $R$ of the replacement rules of $\R$, each $k$-colored hyperedge has the same order $d_k$.
    \item For every $k \in C$, there is exactly one replacement rule $e_k \xrightarrow{k,d_k} R_k$, where $d_k$ is as above.
\end{enumerate}
\end{definition}

In practice, a consistent set of replacement rules allows to replace any $k$-colored $d_k$-edge in a unique way.
This prompts the following definition.

\begin{definition}
\label{def.replacement.system}
Given a fixed set of colors $C$, a \textbf{replacement system} is a consistent set of replacement rules $\R$ together with a hypergraph $\Gamma_0$, called the \textbf{base hypergraph}, that has the same set of colors $C$ and such that every $k$-colored hyperedge of $\Gamma$ has order $d_k$, where $d_k$ is from condition (1) of \cref{def.consistent.replacement.rules}.
\end{definition}

\begin{example}
\label{ex.replacement.systems}
We present four main examples that will be useful throughout this section to highlight a few different properties of replacement systems.

\cref{fig.airplane.replacement.system} depicts the airplane replacement system $\mathcal{A}$, which features two colors and only employs graphs instead of more general hypergraphs.

\cref{fig.houghton.replacement.system} depicts the Houghton replacement systems $\mathcal{H}_n$,
which will be example of almost expanding but not expanding replacement systems.

\cref{fig.sierpinski.triangle.replacement.system} depicts the Sierpinski triangle replacement system $\mathcal{ST}$, which employs hypergraphs instead.

\cref{fig.apollonian.gasket.replacement.system} dpictes the Apollonian gasket replacement system $\mathcal{AG}$, which has the same replacement hypergraph as $\mathcal{ST}$ but yields very different groups.
\end{example}

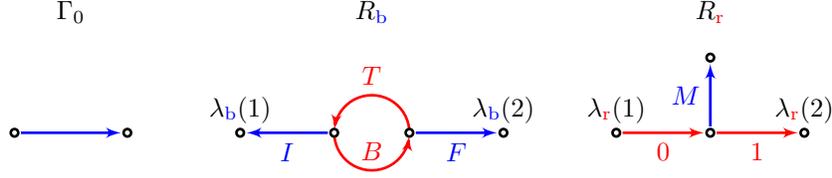
\begin{figure}
\centering
\begin{tikzpicture}
    \node at (0,1.6) {$\Gamma_0$};
    \node[vertex] (l) at (-.75,0) {};
    \node[vertex] (r) at (.75,0) {};
    \draw[edge,blue] (l) to node[above]{} (r);
    \begin{scope}[xshift=4cm]
    \draw[edge,red,domain=5:175] plot ({.5*cos(\x)}, {.5*sin(\x)});
    \draw (90:.5) node[above,red] {$T$};
    \draw[edge,red,domain=185:355] plot ({.5*cos(\x)}, {.5*sin(\x)});
    \draw (270:.5) node[above,red] {$B$};
    \node at (0,1.6) {$R_{\text{\textcolor{blue}{b}}}$};
    \node[vertex] (l) at (-1.75,0) {}; \draw (-1.75,0) node[above]{$\lambda_{\text{\textcolor{blue}{b}}}(1)$};
    \node[vertex] (cl) at (-.5,0) {};
    \node[vertex] (cr) at (.5,0) {};
    \node[vertex] (r) at (1.75,0) {}; \draw (1.75,0) node[above]{$\lambda_{\text{\textcolor{blue}{b}}}(2)$};
    \draw[edge,blue] (cl) to node[below]{$I$} (l);
    \draw[edge,blue] (cr) to node[below]{$F$} (r);
    \end{scope}
    \begin{scope}[xshift=8.5cm]
    \node at (0,1.6) {$R_{\text{\textcolor{red}{r}}}$};
    \node[vertex] (l) at (-1.25,0) {}; \draw (-1.25,0) node[above]{$\lambda_{\text{\textcolor{red}{r}}}(1)$};
    \node[vertex] (r) at (1.25,0) {}; \draw (1.25,0) node[above]{$\lambda_{\text{\textcolor{red}{r}}}(2)$};
    \node[vertex] (c) at (0,0) {};
    \node[vertex] (ct) at (0,1) {};
    \draw[edge,red] (l) to node[below]{$0$} (c);
    \draw[edge,red] (c) to node[below]{$1$} (r);
    \draw[edge,blue] (c) to node[left]{$M$} (ct);
    \end{scope}
\end{tikzpicture}
\caption{The airplane replacement system $\mathcal{A}$.}
\label{fig.airplane.replacement.system}
\end{figure}

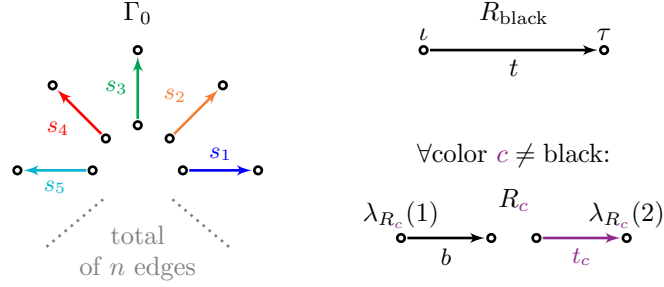
\begin{figure}
\centering
\begin{tikzpicture}
    \node at (0,2.1) {$\Gamma_0$};
    \node[vertex] (1i) at (0:.6) {};
    \node[vertex] (1o) at (0:1.6) {};
    \node[vertex] (2i) at (45:.6) {};
    \node[vertex] (2o) at (45:1.6) {};
    \node[vertex] (3i) at (90:.6) {};
    \node[vertex] (3o) at (90:1.6) {};
    \node[vertex] (4i) at (135:.6) {};
    \node[vertex] (4o) at (135:1.6) {};
    \node[vertex] (5i) at (180:.6) {};
    \node[vertex] (5o) at (180:1.6) {};
    \draw[edge,blue] (1i) to node[above]{\small$s_1$} (1o);
    \draw[edge,Orange] (2i) to node[above left]{\small$s_2$} (2o);
    \draw[edge,Green] (3i) to node[left]{\small$s_3$} (3o);
    \draw[edge,red] (4i) to node[below left]{\small$s_4$} (4o);
    \draw[edge,Turquoise] (5i) to node[below]{\small$s_5$} (5o);
    \draw[dotted,gray] (-40:.6) to (-40:1.6);
    \draw[dotted,gray] (220:.6) to (220:1.6);
    \node[gray,align=center] at (270:1.1) {total\\of $n$ edges};
    \begin{scope}[xshift=5cm,yshift=1.6cm]
    \node at (0,.5) {$R_{\text{black}}$};
    \node[vertex] (bi) at (0-1.2,0) {};
    \node[vertex] (bt) at (1.2,0) {};
    \draw (bi) node[above]{$\iota$};
    \draw (bt) node[above]{$\tau$};
    \draw[edge,black] (bi) to node[below]{$t$} (bt);
    \end{scope}
    \begin{scope}[xshift=5cm,yshift=-.9cm]
    \node at (0,1.1) {$\forall \text{color } \textcolor{Plum}{c} \neq \text{black}$:};
    \node at (0,.5) {$R_{\textcolor{Plum}{c}}$};
    \node[vertex] (ci) at (-1.5,0) {};
    \node[vertex] (c1) at (-.3,0) {};
    \node[vertex] (c2) at (.3,0) {};
    \node[vertex] (ct) at (1.5,0) {};
    \draw (ci) node[above]{$\lambda_{R_{\textcolor{Plum}{c}}}(1)$};
    \draw (ct) node[above]{$\lambda_{R_{\textcolor{Plum}{c}}}(2)$};
    \draw[edge,black] (ci) to node[below]{\small$b$} (c1);
    \draw[edge,Plum] (c2) to node[below]{\small{$t_c$}} (ct);
    \end{scope}
\end{tikzpicture}
\caption{The Houghton replacement systems $\mathcal{H}_n$.}
\label{fig.houghton.replacement.system}
\end{figure}

\begin{figure}
\centering
\begin{subfigure}[t]{.45\textwidth}\centering
$\Gamma_0$\\
\begin{tikzpicture}
    \useasboundingbox (-2,-1.3) rectangle (2,2.1);
    %
    %
    \node[vertex] (t) at (90:1.8) {}; \draw (t) node[left]{$t$};
    \node[vertex] (l) at (210:1.8) {}; \draw (l) node[left]{$l$};
    \node[vertex] (r) at (330:1.8) {}; \draw (r) node[right]{$r$};
    \fill[black,opacity=.25] (t.center) -- (l.center) -- (r.center) -- cycle; \node at (0,0) {$X$};
    \draw (t) to (l);
    \draw (l) to (r);
    \draw (r) to (t);
\end{tikzpicture}\\
$X = (l,r,t)$\\
\vspace{2\baselineskip}
\end{subfigure}
\begin{subfigure}[t]{.45\textwidth}\centering
$R$\\
\begin{tikzpicture}
    \useasboundingbox (-2,-1.3) rectangle (2,2.1);
    %
    %
    \node[vertex] (B3) at (90:1.8) {}; \draw (B3) node[left]{$\lambda_R(3)$};
    \node[vertex] (B1) at (210:1.8) {}; \draw (B1) node[left]{$\lambda_R(1)$};
    \node[vertex] (B2) at (330:1.8) {}; \draw (B2) node[right]{$\lambda_R(2)$};
    \node[vertex] (y) at ($(B3)!.5!(B1)$) {}; \draw (y) node[left]{$y$};
    \node[vertex] (z) at ($(B2)!.5!(B1)$) {}; \draw (z) node[below]{$z$};
    \node[vertex] (x) at ($(B2)!.5!(B3)$) {}; \draw (x) node[right]{$x$};
    \fill[black,opacity=.25] (B1.center) -- (z.center) -- (y.center) -- cycle; \node at (210:.9) {$1$};
    \draw (B1) to (z);
    \draw (z) to (y);
    \draw (y) to (B1);
    \fill[black,opacity=.25] (z.center) -- (B2.center) -- (x.center) -- cycle; \node at (330:.9) {$2$};
    \draw (z) to (B2);
    \draw (B2) to (x);
    \draw (x) to (z);
    \fill[black,opacity=.25] (y.center) -- (x.center) -- (B3.center) -- cycle; \node at (90:.9) {$3$};
    \draw (y) to (x);
    \draw (x) to (B3);
    \draw (B3) to (y);
\end{tikzpicture}\\
$1 = (\lambda_R(1),z,y)$\\
$2 = (z,\lambda_R(2),x)$\\
$3 = (y,x,\lambda_R(3))$
\end{subfigure}
\caption{The Sierpinski triangle replacement system $\mathcal{ST}$.}
\label{fig.sierpinski.triangle.replacement.system}
\end{figure}
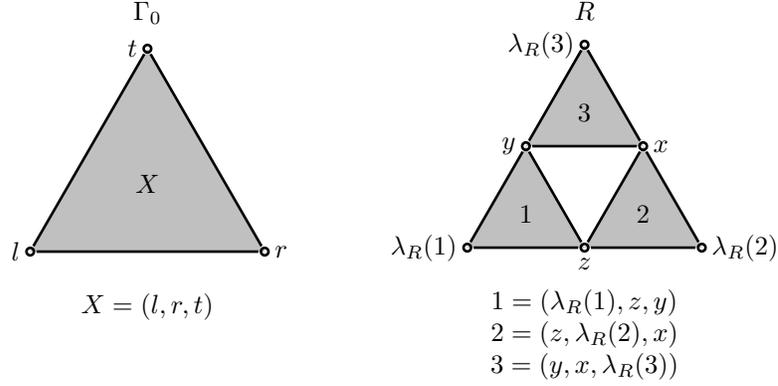

\begin{figure}
\centering
\begin{subfigure}[t]{.45\textwidth}\centering
$\Gamma_0$\\
\vspace{.3\baselineskip}
\begin{tikzpicture}
    %
    \coordinate (Ct) at (0,1.5) {};
    \coordinate (Cc) at (0,0) {};
    \coordinate (Cb) at (0,-1.5) {};
    \draw[fill=black,fill opacity=.25] (Cc) circle (1.5);
    \draw[fill=white] ($(Ct)!.5!(Cc)$) circle (1.5/2);
    \draw[fill=white] ($(Cc)!.5!(Cb)$) circle (1.5/2);
    \node at (180:1) {$L$};
    \node at (0:1) {$R$};
    \node[vertex] (Vt) at (Ct) {};
    \node[below] at (Ct) {$t$};
    \node[vertex] (Vc) at (Cc) {};
    \node[below] at (Cc) {$c$};
    \node[vertex] (Vb) at (Cb) {};
    \node[above] at (Cb) {$b$};
\end{tikzpicture}\\
$L = (t,b,c)$\\
$R = (t,c,b)$\\
\end{subfigure}
\begin{subfigure}[t]{.45\textwidth}\centering
$R$\\
\begin{tikzpicture}
    %
    \node[vertex] (B3) at (90:1.8) {}; \draw (B3) node[left]{$\lambda_R(3)$};
    \node[vertex] (B1) at (210:1.8) {}; \draw (B1) node[left]{$\lambda_R(1)$};
    \node[vertex] (B2) at (330:1.8) {}; \draw (B2) node[right]{$\lambda_R(2)$};
    \node[vertex] (y) at ($(B3)!.5!(B1)$) {}; \draw (y) node[left]{$y$};
    \node[vertex] (z) at ($(B2)!.5!(B1)$) {}; \draw (z) node[below]{$z$};
    \node[vertex] (x) at ($(B2)!.5!(B3)$) {}; \draw (x) node[right]{$x$};
    \fill[black,opacity=.25] (B1.center) -- (z.center) -- (y.center) -- cycle; \node at (210:.9) {$1$};
    \draw (B1) to (z);
    \draw (z) to (y);
    \draw (y) to (B1);
    \fill[black,opacity=.25] (z.center) -- (B2.center) -- (x.center) -- cycle; \node at (330:.9) {$2$};
    \draw (z) to (B2);
    \draw (B2) to (x);
    \draw (x) to (z);
    \fill[black,opacity=.25] (y.center) -- (x.center) -- (B3.center) -- cycle; \node at (90:.9) {$3$};
    \draw (y) to (x);
    \draw (x) to (B3);
    \draw (B3) to (y);
\end{tikzpicture}\\
$1 = (\lambda_R(1),z,y)$\\
$2 = (z,\lambda_R(2),x)$\\
$3 = (y,x,\lambda_R(3))$
\end{subfigure}
\caption{The Apollonian gasket replacement system $\mathcal{AG}$.}
\label{fig.apollonian.gasket.replacement.system}
\end{figure}
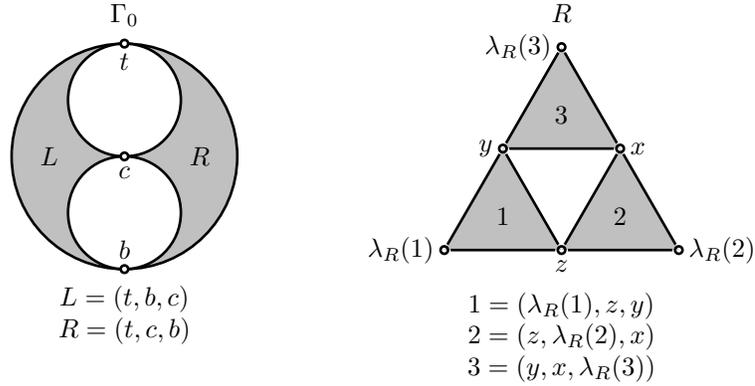

\subsection{The Symbol Space}
\label{sub.symbol.space}

Henceforth, fix a replacement system $\R = (\{ R_k \}_{k \in C}, \Gamma_0)$.

\begin{definition}
\label{def.hypergraph.expansion}
A \textbf{hypergraph expansion} of $(\R,\Gamma_0)$ is a hypergraph $\Gamma$ that is obtained via a sequence of hyperedge expansions of the base hypergraph $\Gamma_0$, i.e.,
\[ \Gamma = \Gamma_0 \triangleleft e_0 \cdots \triangleleft e_m. \]
\end{definition}

As examples, \cref{fig.airplane.expansion,,fig.apollonian.gasket.expansion} show (hyper)graph expansions of the airplane and Apollonian gasket replacement systems $\mathcal{A}$ and $\mathcal{AG}$, respectively.

\begin{figure}
\centering
\begin{tikzpicture}[scale=1.6667,font=\small]
    \draw[edge,red,domain=5:85] plot ({.5*cos(\x)}, {.5*sin(\x)});
    \draw[edge,red,domain=95:175] plot ({.5*cos(\x)}, {.5*sin(\x)});
    \draw[edge,red,domain=185:355] plot ({.5*cos(\x)}, {.5*sin(\x)});
    \draw[edge,red,domain=5:175] plot ({.225*cos(\x)-1.125}, {.225*sin(\x)});
    \draw[edge,red,domain=185:265] plot ({.225*cos(\x)-1.125}, {.225*sin(\x)});
    \draw[edge,red,domain=275:355] plot ({.225*cos(\x)-1.125}, {.225*sin(\x)});
    \draw (45:.5) node[above right,red] {$T0$};
    \draw (135:.5) node[above left,red] {$T1$};
    \draw (270:.5) node[above,red] {$B$};
    \draw (-1.125,.225) node[above,red] {$IB$};
    \draw (-1.5,-.125) node[below,red] {$IT0$};
    \draw (-.75,-.125) node[below,red] {$IT1$};
    \node[vertex] (l) at (-1.75,0) {};
    \node[vertex] (ll) at (-1.35,0) {};
    \node[vertex] (lc) at (-1.125,-.225) {};
    \node[vertex] (lb) at (-1.125,-.55) {};
    \node[vertex] (lr) at (-.875,0) {};
    \node[vertex] (cl) at (-.5,0) {};
    \node[vertex] (cc) at (0,.5) {};
    \node[vertex] (ct) at (0,1.25) {};
    \node[vertex] (cr) at (.5,0) {};
    \node[vertex] (r) at (1.75,0) {};
    \draw[edge,blue] (cr) to node[below]{$F$} (r);
    \draw[edge,blue] (cc) to node[above right]{$TM$} (ct);
    \draw[edge,blue] (ll) to node[above]{$IF$} (l);
    \draw[edge,blue] (lr) to node[above]{$II$} (cl);
    \draw[edge,blue] (lc) to node[below left]{$ITM$} (lb);
\end{tikzpicture}
\caption{Graph expansion of $\mathcal{A}$ corresponding to $\Gamma_0 \triangleleft I \triangleleft B$.}
\label{fig.airplane.expansion}
\end{figure}
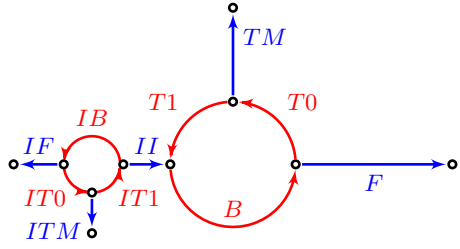

\begin{figure}
\centering
\begin{minipage}[c]{.625\textwidth}
\centering
\begin{tikzpicture}[scale=4/3]
    \coordinate (Ct) at (0,1.5) {};
    \coordinate (Cc) at (0,0) {};
    \coordinate (Cb) at (0,-1.5) {};
    \coordinate (XL) at (-1,0) {};
    \coordinate (Cx) at ($(XL)+(-45:.5)$) {};
    \coordinate (Cy) at ($(XL)+(45:.5)$) {};
    \coordinate (Cz) at ($(XL)+(180:.5)$) {};
    \draw[fill=black,fill opacity=.25] (Cc) circle (1.5);
    \draw[fill=white] ($(Ct)!.5!(Cc)$) circle (1.5/2);
    \draw[fill=white] ($(Cc)!.5!(Cb)$) circle (1.5/2);
    \draw[fill=white] (XL) circle (1.5/3);
    \node at (140:1.8) {$L1$};
    \node at (220:1.8) {$L2$};
    \node (missingcell) at (170:2.5) {$L3$};
    \draw[-stealth,dotted] (missingcell) to[out=0,in=180] (180:.5);
    \node at (0:1) {$R$};
    \node[vertex] (Vt) at (Ct) {};
    \node[above left] at (Ct) {$t$};
    \node[vertex] (Vc) at (Cc) {};
    \node[below] at (Cc) {$c$};
    \node[vertex] (Vb) at (Cb) {};
    \node[below] at (Cb) {$b$};
    \node[vertex] (Vx) at (Cx) {};
    \node[below right] at (Cx) {$x$};
    \node[vertex] (Vy) at (Cy) {};
    \node[above right] at (Cy) {$y$};
    \node[vertex] (Vz) at (Cz) {};
    \node[left] at (Cz) {$z$};
\end{tikzpicture}
\end{minipage}
\begin{minipage}[c]{.325\textwidth}
$L1 = (t,z,y)$\\
$L2 = (z,b,x)$\\
$L3 = (y,x,c)$\\
$R = (t,c,b)$
\end{minipage}
\caption{Hypergraph expansion of $\mathcal{AG}$ corresponding to $\Gamma_0 \triangleleft L$.}
\label{fig.apollonian.gasket.expansion}
\end{figure}

Observe that hyperedges of a hypergraph expansion $\Gamma$ correspond to sequences of elements of $E_{\Gamma_0}$ and $E_{R_k}$ ($k \in C$).
This correspondence is built inductively:
if $e$ is a $k$-colored hyperedge of $\Gamma$, the hyperedges of $\Gamma \triangleleft e$ correspond to
\[ \{ x \in E_\Gamma \mid x \neq e \} \cup \{ e y \mid y \in E_{R_k} \}. \]
In practice, we can only append to $e$ a letter $x$ that belongs $E_{c(e)}$.

Moreover, note that certain hyperedges of distinct hypergraph expansions are naturally identified:
if $x$ and $e$ are distinct hyperedges of $\Gamma$, then $x$ is an hyperedge of both $\Gamma$ and $\Gamma \triangleleft e$.

All of this is formalized by the notion of edge shift that we described in \cref{sub.edge.shifts}:
the set of all hyperedges that appear among all hypergraph expansions $(\R,\Gamma_0)$ is the language of the edge shift $\Omega_{\mathrm{s}}(\Gamma_C)$, where $\Gamma_C$ is defined as follows:
\begin{itemize}
    \item the set of vertices is the set $C$ of colors, with an additional symbol $\mathrm{s}$;
    \item given $k \in C$, there is a $2$-edge from $\mathrm{s}$ to $k$ for each $k$-colored hyperedge of the base hypergraph $\Gamma_0$;
    \item given $k_1, k_2 \in C$, there is a $2$-edge from $k_1$ to $k_2$ for each $k_2$-colored hyperedge of the replacement hypergraph $R_{k_1}$.
\end{itemize}

\begin{example}
The graph $\Gamma_C$ for the airplane replacement system (\cref{fig.airplane.replacement.system}) is depicted in \cref{fig.airplane.shift.graph}.
The word $IBM$ belongs to the language of the shift on $\Gamma_C$, and indeed it is an edge of multiple graph expansions of $\mathcal{A}$ (for example, the one depicted in \cref{fig.airplane.expansion}).
The word $IBF$ instead does not belong to the shift, and indeed there can be no edge associated to such code in any graph expansion of $\mathcal{A}$.
\end{example}

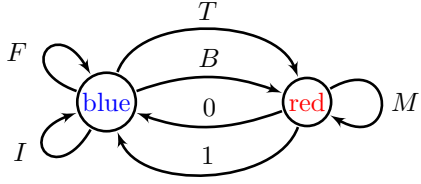
\begin{figure}
\centering
\begin{tikzpicture}
    \node[vertex] (b) at (-1.333,0) {\textcolor{blue}{blue}};
    \node[vertex] (r) at (1.333,0) {\textcolor{red}{red}};
    \draw[edge] (b) to[out=70,in=110,looseness=.9] node[above]{$T$} (r);
    \draw[edge] (b) to[out=20,in=160,looseness=1] node[above]{$B$} (r);
    \draw[edge] (r) to[out=200,in=-20,looseness=1] node[above]{$0$} (b);
    \draw[edge] (r) to[out=250,in=-70,looseness=.9] node[above]{$1$} (b);
    \draw[edge] (b) to[out=240,in=200,looseness=8] node[left=.125cm]{$I$} (b);
    \draw[edge] (b) to[out=160,in=120,looseness=8] node[left=.125cm]{$F$} (b);
    \draw[edge] (r) to[out=30,in=330,looseness=7.5] node[right]{$M$} (r);
\end{tikzpicture}
\caption{The graph $\Gamma_C$ for the airplane replacement system $\mathcal{A}$.}
\label{fig.airplane.shift.graph}
\end{figure}

With \cref{def.edge.shift} in mind, this construction prompts the following definition.

\begin{definition}
\label{def.alphabet.language.symbol.space}
Fix a replacement system $(\R,\Gamma_0)$.
\begin{itemize}
    \item Its \textbf{alphabet} is the set $\boldsymbol{\mathbb{A}_{(\R,\Gamma_0)}}$ of hyperedges of the base and replacement hypergraphs $\Gamma_0$ and $R_k$ ($k \in C$).
    \item Its \textbf{language} is the set $\boldsymbol{\mathbb{L}_{(\R,\Gamma_0)}}$ of all hyperedges that appear among all hypergraph expansions of $(\R,\Gamma_0)$.
    It corresponds to a set of finite words that can be read as finite walks on $\Gamma_C$.
    \item Its \textbf{symbol space} is the set $\boldsymbol{\Omega_{(\R,\Gamma_0)}}$ of those infinite sequences in the alphabet $\mathbb{A}_{(\R,\Gamma_0)}$ whose prefixes belong to $\mathbb{L}_{(\R,\Gamma_0)}$.
    It corresponds to the edge shift $\Omega_{\mathrm{s}}(\Gamma_C)$.
\end{itemize}
We denote them simply by $\mathbb{A}$, $\mathbb{L}$ and $\Omega$ when the replacement system is understood.
If $e = x_1 \dots x_d \in \mathbb{L}_{(\R,\Gamma_0)}$ then $d$ is called the \textbf{depth} of $e$ and is denoted by $\dpt(e)$.
\end{definition}

\subsection{The Gluing Relation}
\label{sub.gluing.relation}

\begin{definition}
\label{def.gluing.relation}
The \textbf{gluing relation} of the replacement system $(\R,\Gamma_0)$ is the binary relation on the symbol space $\Omega$ defined by setting $\alpha \sim \beta$ when every two prefixes of $\alpha$ and $\beta$ of equal length represent hyperedges that share at least one boundary vertex.
\end{definition}

By definition, the gluing relation is always reflexive and symmetric, but may not be transitive in general.
One could consider the transitive closure, but many of the nice properties that we expect (which are described later) are not guaranteed when doing so (see for example \cite[Remark 1.23]{BF19}).
It is instead convenient to focus our attentions on those replacement systems that satisfy the natural condition that we expose below in \cref{def.expanding}, which generalizes \cite[Definition 1.8]{BF19}.

\begin{definition}
\label{def.isolated.colors}
A color $k$ is \textbf{isolated} if all $k$-colored hyperedges in $\mathbb{L}$ (i.e., all hyperedges among all hypergraph expansions) are isolated (\cref{def.incident.adjacent.isolated}).
\end{definition}

For example, all of the colors in the Houghton replacement systems $\mathcal{H}_n$ are isolated.
In most examples, either each hypergraph expansion is connected or all colors are isolated, two exceptions being the replacement systems for the Thompson-like groups $QF$ and $QT$ from \cite{QV} (see \cite{RearrConj} for a construction of these groups with replacement systems).

\begin{definition}
\label{def.trivial.replacement.hypergraph}
A replacement rule $e \xrightarrow{k,d} R$ is \textbf{trivial} if the hypergraph $R$ consists solely of a $k$-colored $d$-edge (which must then have the $d$ boundary vertices of $R$ as its boundary vertices).
We then simply say that the color $k$ itself is \textbf{trivial}.
\end{definition}

The reason why we say that these replacement rules are trivial comes from the fact that performing an expansion based on such a rule on a hypergraph can at most change the order of the boundary vertices of one of its hyperedges and nothing else.
In particular, if the $k$-colored replacement rule of a replacement system is trivial then each $k$-colored hyperedge, when regarded as a finite sequence of symbols in the alphabet $\mathbb{A}_{(\R,\Gamma_0)}$, is the prefix of a unique element of the symbol space.
Thus, words starting with a $k$-colored prefix are forced to a unique infinite sequence.

\begin{definition}
\label{def.expanding}
A replacement system $(\R,\Gamma_0)$ is \textbf{almost expanding} if
\begin{itemize}
    \item for each of its non-isolated colors, in its replacement hypergraph no two boundary vertices are adjacent;
    \item for each of its isolated colors, either it is trivial or in its replacement hypergraph no two boundary vertices are adjacent.
\end{itemize}
An almost expanding replacement system is \textbf{expanding} if no two boundary vertices are adjacent in any replacement hypergraph.
\end{definition}

From our \cref{ex.replacement.systems}, the replacement systems $\mathcal{A}$, $\mathcal{ST}$ and $\mathcal{AG}$ are expanding, whereas each $\mathcal{H}_n$ is only almost expanding.

We will soon show that, when a replacement system is almost expanding, the gluing relation is an equivalence relation, which will allow us to take the quotient $\Omega / \sim$.
In order to see this, we need to understand the behaviour of vertices.

Precisely as we did in \cref{sub.symbol.space} for hyperedges, vertices that appear among the hypergraph expansions of a replacement system $(\R,\Gamma_0)$ also correspond to certain finite words.
Indeed, if we let $\mathbb{V}_{(\R,\Gamma_0)}$ (or simply $\mathbb{V}$) denote the set of vertices of the base and replacement hypergraphs, then each vertex corresponds uniquely to a word of the form $l v$ where $l \in \mathbb{L} \cup \{ \varepsilon \}$ (i.e., $l$ is some hyperedge of a hypergraph expansion or the empty word) and $v$ is a vertex of the $c(l)$-colored replacement hypergraph $R_{c(l)}$ or, when $l = \varepsilon$ is the empty word $\varepsilon$, a vertex of the base hypergraph $\Gamma_0$.

We say that a sequence $\alpha \in \Omega$ \textbf{represents a vertex} $v \in \mathbb{V}$ if all but finitely many prefixes of $\alpha$ are incident on $v$.

\begin{remark}[Proposition 1.22(1) of \cite{BF19}]
Each vertex has at least one representative.
Indeed, recall that by \cref{ass.hypergraphs} our graphs do not have isolated vertices, so there exists some hyperedge $l \in \mathbb{L}$ that is adjacent to $v$.
Moreover, since the boundary vertices of the replacement hypergraphs are not isolated, there are arbitrarily long words starting with $l$ that are adjacent to $v$, as desired.
\end{remark}

It is useful to distinguish a special type of sequences that can occur in almost expanding but not expanding replacement systems.

\begin{definition}
For an almost expanding replacement system, a sequence of $\Omega$ is \textbf{isolated} if it has a prefix whose color is trivial.
\end{definition}

\begin{remark}
\label{rmk.isolated.sequences}
In an almost expanding replacement system (\cref{def.expanding}), a sequence $\alpha = x_1 x_2 \dots \in \Omega$ is isolated if and only if there exists some $m \in \mathbb{N}$ such that each element of $\mathbb{L}$ starting with $x_1 \dots x_m$ is an isolated hyperedge with the same boundary vertices.
In this case, $\alpha$ is the sole sequence starting with $x_1 \dots x_m$.
As for non-isolated sequences, none of its prefixes is colored by a trivial color.

Moreover, note that expanding replacement systems never feature trivial colors, so they cannot have isolated sequences.
\end{remark}

The next two propositions, which generalize \cite[Proposition 1.22]{BF19}, show how the gluing relation can only be non-trivial on points that represent vertices.

\begin{proposition}
\label{prop.gluing.vertices}
Suppose that the replacement system is almost expanding.
\begin{enumerate}
    \item Each non-isolated sequence of $\Omega$ represents at most one vertex.
    \item Each isolated sequence of $\Omega$ represents vertices that are only adjacent between themselves.
    \item Every isolated sequence is not related to any other sequence under the gluing relation.
    \item Two distinct non-isolated sequences $\alpha, \beta \in \Omega$ are related under the gluing relation if and only if they represent the same vertex.
\end{enumerate}
\end{proposition}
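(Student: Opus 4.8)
The plan is to first isolate the combinatorial heart of the statement: a lemma describing how the boundary vertices of a prefix evolve under expansion. Fix a sequence $\alpha = x_1 x_2 \dots \in \Omega$ and write $a_n = x_1 \dots x_n$ for its length-$n$ prefix. When $c(a_n)$ is non-trivial, the almost expanding hypothesis (\cref{def.expanding}) forces that no two boundary vertices of $R_{c(a_n)}$ are adjacent, so each hyperedge of $R_{c(a_n)}$ --- in particular the one producing $a_{n+1}$ --- is incident on at most one boundary vertex of $R_{c(a_n)}$. Translating back, $a_{n+1}$ inherits at most one boundary vertex of $a_n$, every other boundary vertex of $a_{n+1}$ being a fresh vertex created inside $R_{c(a_n)}$. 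I would record this together with two monotonicity observations: a fresh vertex created when expanding $a_n$ can only be a boundary vertex of extensions of $a_n$, and an inherited vertex, once dropped, is never incident on a deeper prefix again. These facts drive the whole argument.

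Part (1) is then immediate: if a non-isolated $\alpha$ represented distinct $u \neq v$, then some deep prefix $a_n$ would be incident on both; since neither $u$ nor $v$ is fresh at level $n+1$, the hyperedge $a_{n+1}$ would have to inherit both from $a_n$, contradicting the lemma (non-triviality of every prefix, guaranteed by $\alpha$ being non-isolated via \cref{rmk.isolated.sequences}, is exactly what lets me apply it at every level). For part (2), I would invoke \cref{rmk.isolated.sequences} to produce $m$ such that every element of $\mathbb{L}$ extending $w \coloneq x_1 \dots x_m$ is an isolated hyperedge with one fixed boundary set $B$. As the prefixes of depth $\ge m$ are incident exactly on $B$, the vertices represented by $\alpha$ are precisely the elements of $B$; they are pairwise adjacent through $w$, and because each extension of $w$ is isolated (\cref{def.incident.adjacent.isolated}), in any hypergraph expansion containing such a prefix no vertex of $B$ is a boundary vertex of any other hyperedge. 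This is the precise sense in which the represented vertices are ``only adjacent between themselves''.

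Part (3) builds on (2). Suppose $\alpha$ is isolated and $\alpha \sim \beta$; since $\alpha$ is the unique sequence with prefix $w$, it suffices to show $\beta$ extends $w$. The key sublemma, extracted from (2), is that any hyperedge of $\mathbb{L}$ of depth $\ge m$ incident on a vertex of $B$ must extend $w$: a fresh vertex of $B$ is private to the region of $w$, while for an inherited vertex of $B$ one compares an extension of $w$ with the putative hyperedge inside a common expansion and uses that $w$'s extensions are isolated. The gluing condition (\cref{def.gluing.relation}) forces, for every $n \ge m$, the length-$n$ prefix of $\beta$ to share a boundary vertex with the length-$n$ prefix of $\alpha$, whose boundary is $B$; by the sublemma that prefix of $\beta$ extends $w$, hence $\beta$ extends $w$ and equals $\alpha$.

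Finally, part (4). For the backward direction, if two non-isolated sequences both represent $v$, I would first argue they share the prefix $\ell$ determined by writing $v = \ell v'$ with $v'$ fresh (or $\ell = \varepsilon$ if $v \in \Gamma_0$), since only hyperedges extending $\ell$ can touch $v$; from depth $|\ell|+1$ on every prefix of each sequence is incident on $v$, while below that depth the two prefixes coincide, so equal-length prefixes always share a vertex and $\alpha \sim \beta$. For the forward direction, let $\ell$ be the longest common prefix of the distinct related sequences, so they diverge into distinct hyperedges $x \neq y$ of $R_{c(\ell)}$ (with $c(\ell)$ non-trivial). Any vertex shared by corresponding deep prefixes must lie in the finite set of common boundary vertices of $x$ and $y$, since the fresh vertices below $x$ and below $y$ are private to their respective regions; a pigeonhole argument on this finite set, with the ``once dropped, never regained'' monotonicity and part (1), shows that a single such vertex is incident on all deep prefixes of both sequences, i.e.\ it is represented by both. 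The main obstacle I anticipate is exactly this bookkeeping distinguishing fresh from inherited boundary vertices --- confining every shared vertex to the common boundary of the diverging hyperedges in (4), and justifying the ``inside a common expansion'' comparisons in (2) and (3), which rely on incomparable words of $\mathbb{L}$ being simultaneously realizable as hyperedges of one hypergraph expansion.
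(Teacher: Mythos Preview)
Your proposal is correct and follows essentially the same approach as the paper's proof: both hinge on the observation that, under a non-trivial expansion, a hyperedge inherits at most one boundary vertex from its parent, and both use this (together with the isolated-hyperedge property in parts (2) and (3)) to force the sequence of shared vertices to stabilize. The paper's argument is organized slightly more tersely---it does not isolate the inheritance lemma up front, and in (4) it argues stabilization directly rather than passing to the longest common prefix and invoking pigeonhole---but the logical content is the same.
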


\begin{proof}
For (1), suppose that a non-isolated sequence $\alpha$ represents the vertices $v_1$ and $v_2$.
This means that every prefix of $\alpha$ is incident on both $v_1$ and $v_2$.
As noted in \cref{rmk.isolated.sequences}, no prefix of $\alpha$ is colored by a trivial color.
Then, by the definition of almost expanding (\cref{def.expanding}), an expansion of a prefix of $\alpha$ that is incident on both $v_1$ and $v_2$ cannot produce hyperedges that are incident on both vertices, so we must have that $v_1 = v_2$.

For (2), we noted in \cref{rmk.isolated.sequences} that, eventually, prefixes of isolated sequences are hyperedges with the same boundary vertices.
Since such prefixes are colored by an isolated color (by \cref{def.expanding}), such vertices cannot be adjacent to any other vertex.

For (3), assume that $\alpha \sim \beta$ and that $\alpha = x_1 x_2 \dots$ is an isolated sequence.
Then, as noted in \cref{rmk.isolated.sequences}, $\alpha$ has a prefix $x_1 \dots x_m$ colored by a trivial color and every subsequent prefix $x_1 \dots x_{m+i}$ (for every $i \geq 0$) is an isolated hyperedge with the same boundary vertices as $x_1 \dots x_m$.
Hence, since $\alpha \sim \beta$, every prefix $y_1 \dots y_{m+i}$ of $\beta$ shares at least one boundary vertex with $x_1 \dots x_{m+i}$.
But $x_1 \dots x_{m+i}$ is an isolated hyperedge, so it must be the same as $y_1 \dots y_{m+i}$ for all $i \geq 0$.
Thus, we must have $\alpha = \beta$, as needed.

Finally, for (4), let $\alpha$ and $\beta$ be distinct non-isolated sequences.
If $\alpha \sim \beta$, by \cref{def.gluing.relation} we have that each pair of prefixes $x_1 \dots x_m$ and $y_1 \dots y_m$ share at least one boundary vertex $v_m$ for all $m \in \mathbb{N}$.
For the proof in this direction, we only need to show that the sequence $v_m$ eventually stabilizes.
Since $\alpha \neq \beta$, there exists $m \in \mathbb{N}$ such that $x_1 \dots x_m \neq y_1 \dots y_m$.
By \cref{def.expanding}, hyperedges of replacement hypergraphs are adjacent to at most one boundary vertex.
Thus, if $x_1 \dots x_m$ and $y_1 \dots y_m$ share multiple boundary vertices, their successors $x_1 \dots x_{m+1}$ and $y_1 \dots y_{m+1}$ only share one, so we can assume that $x_1 \dots x_m$ and $y_1 \dots y_m$ share a unique boundary vertex.
Then, if $x_1 \dots x_m$ and $y_1 \dots y_m$ share the boundary vertex $v_m$, then their successors $x_1 \dots x_{m+1}$ and $y_1 \dots y_{m+1}$ cannot share any boundary vertex that is not $v_m$ itself, so we are done.

Conversely, assume that two distinct non-isolated sequences $\alpha = x_1 x_2 \dots$ and $\beta = y_1 y_2 \dots$ represent some common vertex $v$.
Assume that $v$ has depth $l$, i.e., $v$ first appears after performing $l$ expansions of the base hypergraph (possibly zero).
Then $x_1 \dots x_l = y_1 \dots y_l$.
We claim that, since $\alpha$ and $\beta$ both represent $v$, further prefixes $x_1 \dots x_m$ and $y_1 \dots y_m$ (where $m > l$) must be adjacent, which implies $\alpha \sim \beta$.
The claim is indeed true since, in general, if $x_1 \dots x_m$ and $y_1 \dots y_m$ are not adjacent, then $x_1 \dots x_{m+1}$ and $y_1 \dots y_{m+1}$ cannot be adjacent either.
\end{proof}

As an easy consequence of \cref{prop.gluing.vertices}, we have that the gluing relations of almost expanding replacement systems are transitive.
Indeed, assume that $\alpha \sim \beta$ and that $\beta \sim \gamma$.
Thanks to point (3) of \cref{prop.gluing.vertices}, there is nothing to prove whenever one of the three sequences is isolated.
When the sequences are non-isolated, point (4) immediately concludes the proof.
We have thus proved the following essential fact.

\begin{proposition}
The gluing relation of an almost expanding replacement system is an equivalence relation.
\end{proposition}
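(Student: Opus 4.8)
The plan is to verify the three defining properties of an equivalence relation, with essentially all of the work already packaged into \cref{prop.gluing.vertices}. Reflexivity and symmetry need nothing beyond \cref{def.gluing.relation}: for any $\alpha \in \Omega$, every prefix trivially shares all of its boundary vertices with itself, so $\alpha \sim \alpha$; and the defining condition of $\sim$, namely that equal-length prefixes of the two sequences share a boundary vertex, is manifestly symmetric in its two arguments, so $\alpha \sim \beta$ implies $\beta \sim \alpha$. These two facts were already recorded immediately after \cref{def.gluing.relation}.

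The substance therefore lies in transitivity, and here I would lean entirely on \cref{prop.gluing.vertices}, which is precisely the tool reducing the gluing relation to a statement about represented vertices. Suppose $\alpha \sim \beta$ and $\beta \sim \gamma$. First I would dispatch the case in which one of the three sequences is isolated: by point (3) of \cref{prop.gluing.vertices}, an isolated sequence is related under $\sim$ only to itself, so if any of $\alpha, \beta, \gamma$ is isolated, the two hypotheses (together with symmetry) force all three to coincide, and then $\alpha \sim \gamma$ is just reflexivity.

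It remains to treat the case in which $\alpha, \beta, \gamma$ are all non-isolated. If $\alpha = \beta$ or $\beta = \gamma$ the conclusion is immediate, so I may assume $\alpha \neq \beta$ and $\beta \neq \gamma$. By point (4), the distinct non-isolated pair $\alpha, \beta$ represents a common vertex and the distinct non-isolated pair $\beta, \gamma$ represents a common vertex; by point (1), the non-isolated sequence $\beta$ represents at most one vertex, so these two common vertices are one and the same vertex $v$. Hence $\alpha$ and $\gamma$ both represent $v$, and applying the converse direction of point (4) (or reflexivity, should $\alpha = \gamma$) yields $\alpha \sim \gamma$.

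The only genuine obstacle in this argument is the bookkeeping of the isolated-versus-non-isolated dichotomy: one must be careful that the isolated case collapses everything to equality and that the non-isolated case routes through the uniqueness in point (1) to merge the two common vertices. All of the real content — that $\sim$ is controlled by represented vertices, and that a non-isolated sequence represents at most one — is already established in \cref{prop.gluing.vertices}, so transitivity reduces to this case analysis.
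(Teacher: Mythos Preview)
Your proposal is correct and follows essentially the same approach as the paper: reflexivity and symmetry are noted as immediate from \cref{def.gluing.relation}, and transitivity is handled via \cref{prop.gluing.vertices}, using point (3) to collapse the isolated case and point (4) for the non-isolated case. You are slightly more explicit than the paper in invoking point (1) to merge the two common vertices through $\beta$, which the paper leaves implicit in its appeal to point (4).
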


\subsection{The Limit Space}
\label{sub.limit.space}

From here on, we endow the symbol space $\Omega$ with the subspace topology inherited from that of the full shift $\mathbb{A}^\omega$.

\begin{definition}
\label{def.limit.space}
Given an almost expanding replacement system, its \textbf{limit space} is defined as the quotient space $\Omega / \sim$.
For each sequence $\alpha \in \Omega$, the corresponding point of the limit space is denoted by $\boldsymbol{\llbracket \alpha \rrbracket}$.
A representative of a point $p$ of the limit space is called an \textbf{address} for $p$.
\end{definition}

For example, the limit spaces of the airplane, Sierpinski triangle and Apollonian gasket replacement systems (\cref{fig.airplane.replacement.system,fig.sierpinski.triangle.replacement.system,fig.apollonian.gasket.replacement.system}) are depicted in \cref{fig.airplane.limit.space,fig.sierpinski.triangle.and.apollonian.gasket}.
The limit space of the $n$-th Houghton replacement system $\mathcal{H}_n$ (\cref{fig.houghton.replacement.system}) is the disjoint union of $n$ copies of $\{ 1/k \mid k \in \mathbb{N} \} \cup \{0\}$.

\begin{figure}
\centering
\copyrightbox[b]{\includegraphics[width=.425\textwidth]{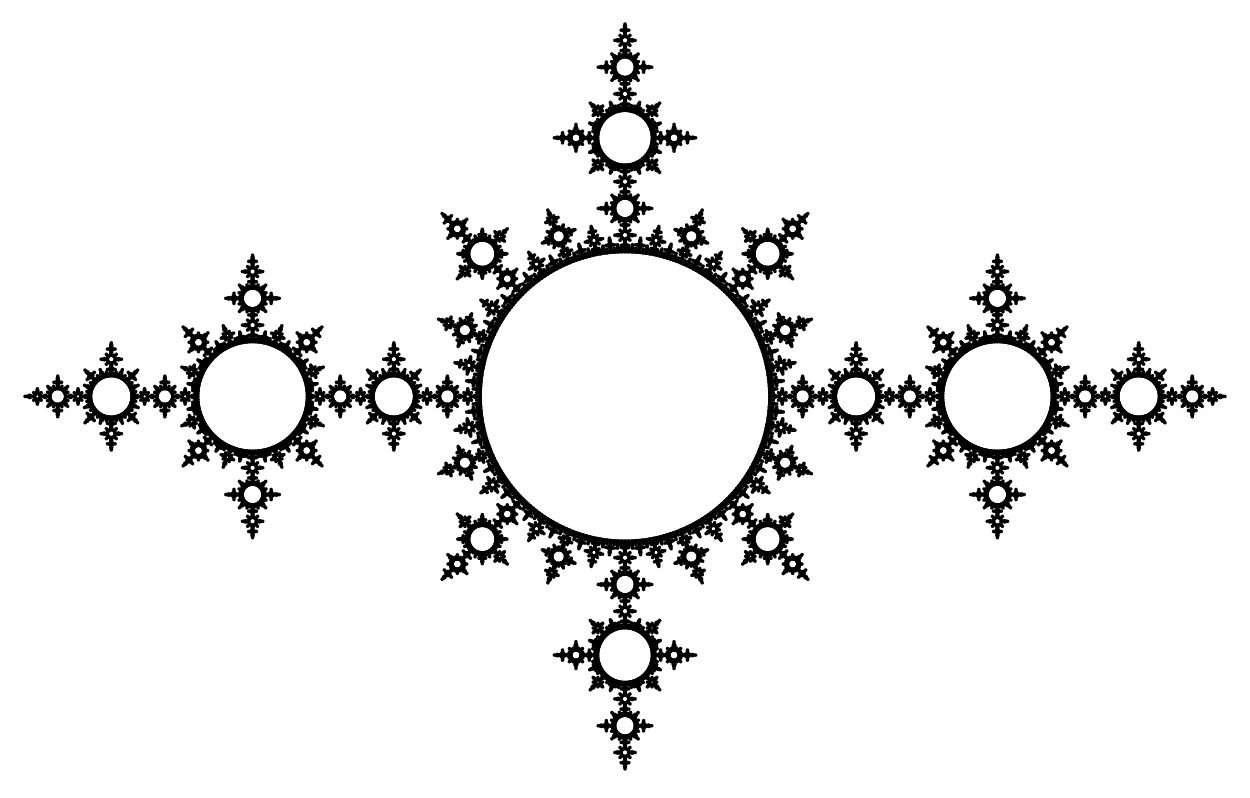}}{Image by James Belk and Bradley Forrest, from \cite{BF19}}
\caption{The limit space of the replacement system $\mathcal{A}$.}
\label{fig.airplane.limit.space}
\end{figure}

\begin{figure}
\centering
\copyrightbox[b]{\includegraphics[width=.3\textwidth]{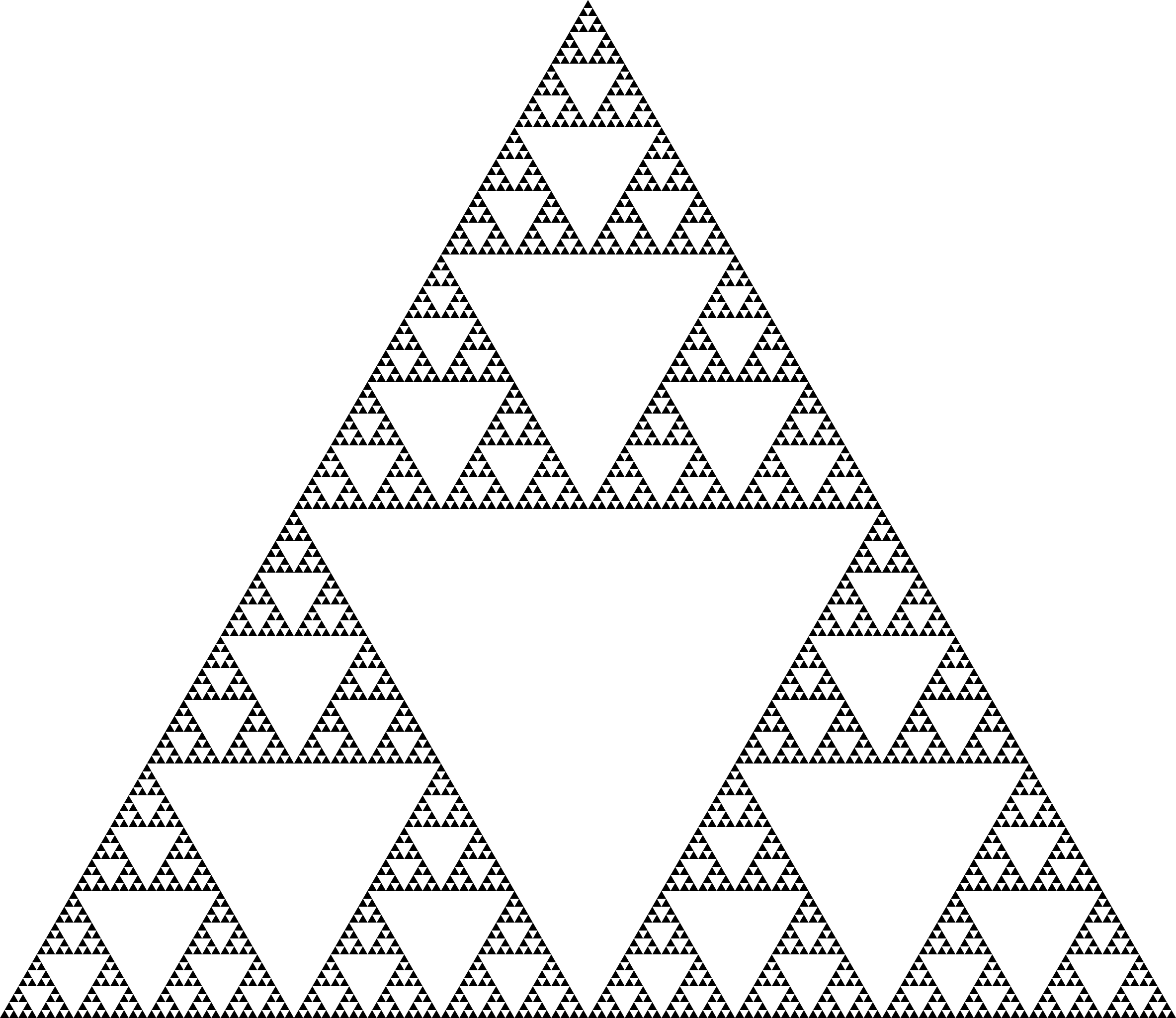}}{Image in public domain, available at \href{https://commons.wikimedia.org/wiki/File:SierpinskiTriangle.svg}{this link}}
\hspace{1cm}
\copyrightbox[b]{\includegraphics[width=.3\textwidth]{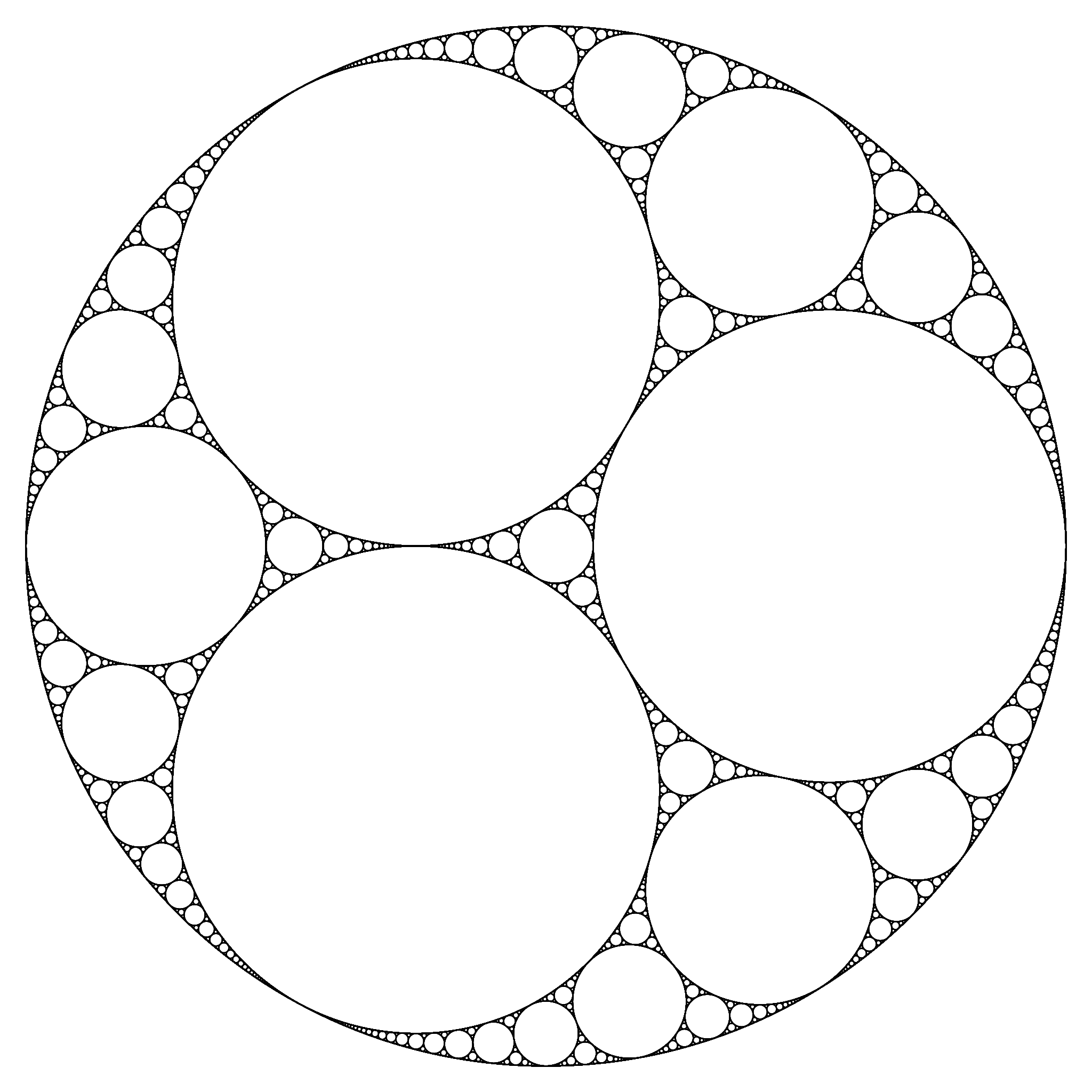}}{Image by Todd Stedl, \href{https://creativecommons.org/licenses/by-sa/4.0}{CC BY-SA 4.0}, available at \href{https://commons.wikimedia.org/wiki/File:ApollonianGasket-15_32_32_33.svg}{this link}}
\caption{The limit spaces of the Sierpinski triangle replacement system $\mathcal{ST}$ and of the Apollonian gasket replacement system $\mathcal{AG}$.}
\label{fig.sierpinski.triangle.and.apollonian.gasket}
\end{figure}

Being the quotient of a compact space, the limit space is compact.
Then, in order to show that the limit space of an almost expanding replacement system is metrizable, it suffices to prove that it is Hausdorff (for example by \cite[Corollary 23.2]{Willard}).
The proof of this is almost identical to that of \cite[Theorem 1.25]{BF19}, which works for limit spaces of expanding $2$-edge replacement systems, so we will not include it here.
The only additional point of the proof concerns isolated points, whose treatment is trivial since they are themselves open.

In order to make sure that the proof of \cite[Theorem 1.25]{BF19} works in our setting, however, we need to show that singletons and cells are closed and that cell interiors are open, which we will do in \cref{lem.topology.limit.space}.
Once these facts are proven, we will have shown the following result.

\begin{theorem}
\label{thm.limit.space.is.compact.metrizable}
The limit space of an almost expanding replacement system is a compact and metrizable topological space.
\end{theorem}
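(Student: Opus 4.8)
The plan is to prove the three properties separately: compactness directly, and metrizability by reducing it to the Hausdorff property, which carries the bulk of the work.

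First I would establish compactness. The symbol space $\Omega = \Omega_{\mathrm{s}}(\Gamma_C)$ is cut out of the full shift $\mathbb{A}^\omega$ by the requirement that every prefix lie in the language $\mathbb{L}$; since this is a condition on finite prefixes, $\Omega$ is closed in $\mathbb{A}^\omega$. As the alphabet $\mathbb{A}$ is finite by \cref{ass.hypergraphs}, $\mathbb{A}^\omega$ is compact by Tychonoff, so $\Omega$ is compact, and therefore so is its continuous image $\Omega/{\sim}$ under the quotient map $q$. Next I would reduce metrizability to the Hausdorff property. The cones $T_x$ (one for each $x \in \mathbb{L}$) form a countable basis of $\Omega$, and I would check that the interiors of their images $q(T_x)$, shown to be open in \cref{lem.topology.limit.space}, yield a countable basis of $\Omega/{\sim}$. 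Granting the Hausdorff property, the limit space is then compact, Hausdorff and second countable, hence metrizable by the Urysohn metrization theorem in the form of \cite[Corollary 23.2]{Willard}; this is exactly why it suffices to treat the Hausdorff property.

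For the Hausdorff property I would follow the argument of \cite[Theorem 1.25]{BF19}. Given $\llbracket\alpha\rrbracket \neq \llbracket\beta\rrbracket$, i.e.\ $\alpha \not\sim \beta$, I would distinguish cases using \cref{prop.gluing.vertices}. If either address is isolated, then by \cref{rmk.isolated.sequences} it is the unique sequence extending some prefix, so its class is an isolated open point and separation is immediate. Otherwise both addresses are non-isolated, and since $\alpha \not\sim \beta$ they represent no common vertex; hence there is a depth $m$ at which the prefixes $x_1 \cdots x_m$ and $y_1 \cdots y_m$ share no boundary vertex. The sequences glued into the cone $T_{x_1 \cdots x_m}$ are precisely those meeting it along a shared boundary vertex, so, after possibly passing to a deeper expansion, taking the finitely many cells adjacent to each of the two prefix-cells and passing to cell interiors produces disjoint saturated open neighborhoods of the two classes.

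The main obstacle, and the part I would develop with care, is \cref{lem.topology.limit.space}: that singletons and cells are closed and that cell interiors are open. I would prove closedness of a cell $q(T_x)$ by showing that its $\sim$-saturation in $\Omega$ is closed — this follows from the gluing analysis of \cref{prop.gluing.vertices}, since the saturation of $T_x$ is $T_x$ together with the finitely many adjacent cells meeting it at a boundary vertex, rather than from compact-to-Hausdorff closedness, so as to avoid circularity with the Hausdorff argument. The genuinely new subtlety compared to \cite{BF19} is the presence of isolated colors and isolated sequences permitted by the almost expanding hypothesis (\cref{def.expanding}); I would handle these strata separately, observing that isolated points are open and that, by \cref{prop.gluing.vertices}(2)--(3), isolated sequences neither glue to other sequences nor interfere with the cell structure, so they contribute only clopen singletons that are automatically separated from everything else.
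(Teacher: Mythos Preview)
Your overall approach matches the paper's: compactness from the quotient of the compact symbol space, metrizability reduced to the Hausdorff property via the cited Willard corollary, the Hausdorff argument imported from \cite[Theorem~1.25]{BF19} with the isolated case handled separately, and the technical core identified as \cref{lem.topology.limit.space}.

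One step fails as written. You claim that the topological interiors of cells form a countable basis of $\Omega/{\sim}$, but they do not: if $v$ is a vertex of the base hypergraph $\Gamma_0$ of degree at least~$2$, the corresponding singular point $p$ has addresses beginning with distinct hyperedges of $\Gamma_0$, so by \cref{rmk.cell.and.topological.interior} it lies in no $\llparenthesis x \rrparenthesis$ at all, and the same happens for cell interiors $\|x\|$. The basis the paper actually uses (\cref{rmk.stars.are.basis}) consists of the $d$-stars $S_d(p) = \bigcup\{\|e\| : \dpt(e)=d,\ p \in \llbracket e \rrbracket\} \cup \{p\}$, which are precisely the saturated open neighbourhoods produced in the Hausdorff separation. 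Alternatively you can drop the explicit second-countability check: the Willard corollary in question is that a Hausdorff continuous image of a compact metric space is metrizable, so compactness and metrizability of $\Omega$ together with Hausdorffness of the quotient already suffice. A smaller imprecision: the $\sim$-saturation of $T_x$ is not $T_x$ together with adjacent \emph{cones}, but $T_x$ together with the individual outside addresses of its finitely many boundary points; the paper writes this preimage as a nested intersection of finite unions of cones in the proof of \cref{lem.topology.limit.space}.
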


\section{The topology of the limit space}
\label{sec.topology}

Throughout this section, we will assume that the replacement system is almost expanding (\cref{def.expanding}).

\subsection{Cells of the Limit Space}

Each point of the limit space corresponds to some (possibly more than one) sequence $\alpha$, in which we indicate the point by $\llbracket \alpha \rrbracket$.
With a slight abuse of notation, we use the same notation for hyperedges $e \in \mathbb{L}$, as explained right below.

\begin{definition}
\label{def.cell}
A \textbf{cell} of the limit space is a subset of the form
\[ \boldsymbol{\llbracket e \rrbracket} = \{ \llbracket \alpha \rrbracket \mid e \text{ is a prefix of } \alpha \} = \{ \llbracket \alpha \rrbracket \mid \alpha \in T_e \} \]
for some hyperedge $e \in \mathbb{L}$.
\end{definition}

\cref{fig.cells.airplane.sierpinski.triangle.and.apollonian.gasket} portrays examples of cells for replacement systems from \cref{ex.replacement.systems}.

\begin{figure}
\centering
\copyrightbox[b]{\includegraphics[width=.32\textwidth]{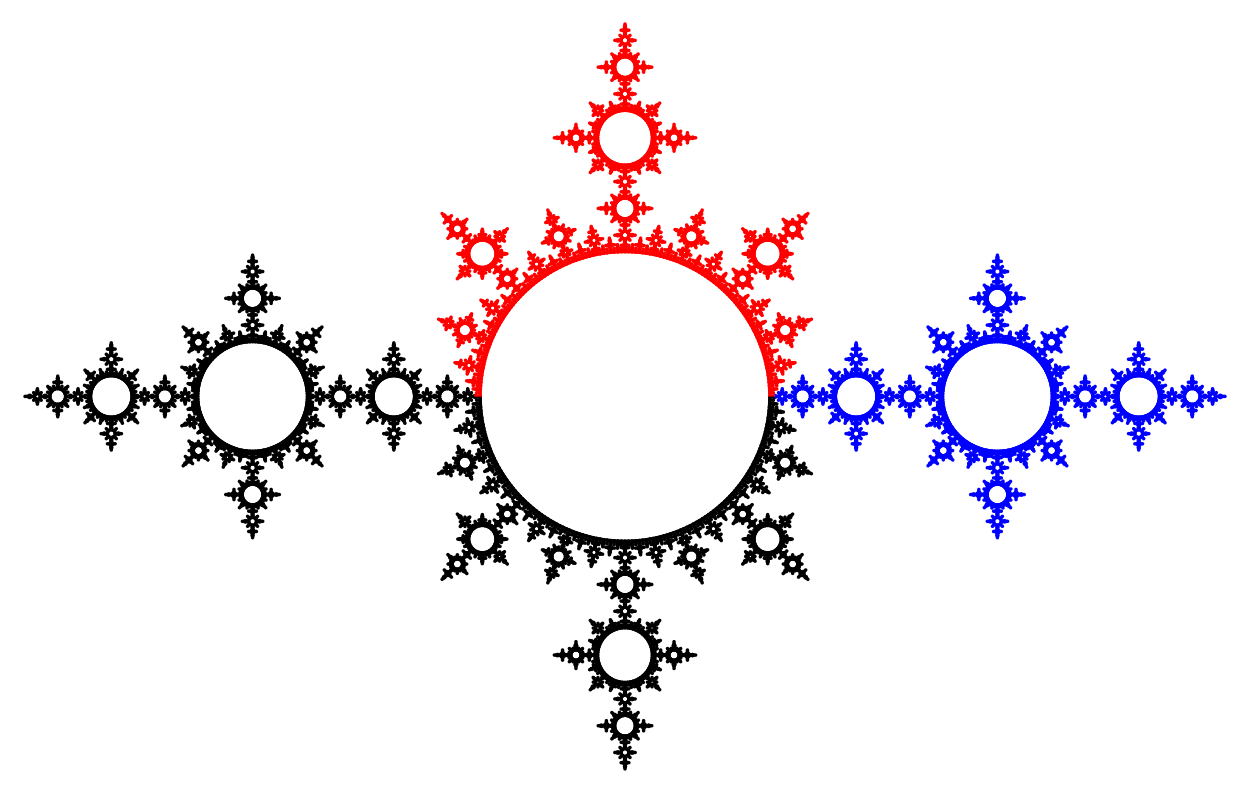}}{Image by James Belk and Bradley Forrest, from \cite{BF19}, modified by the authors}
\hfill
\copyrightbox[b]{\includegraphics[width=.28\textwidth]{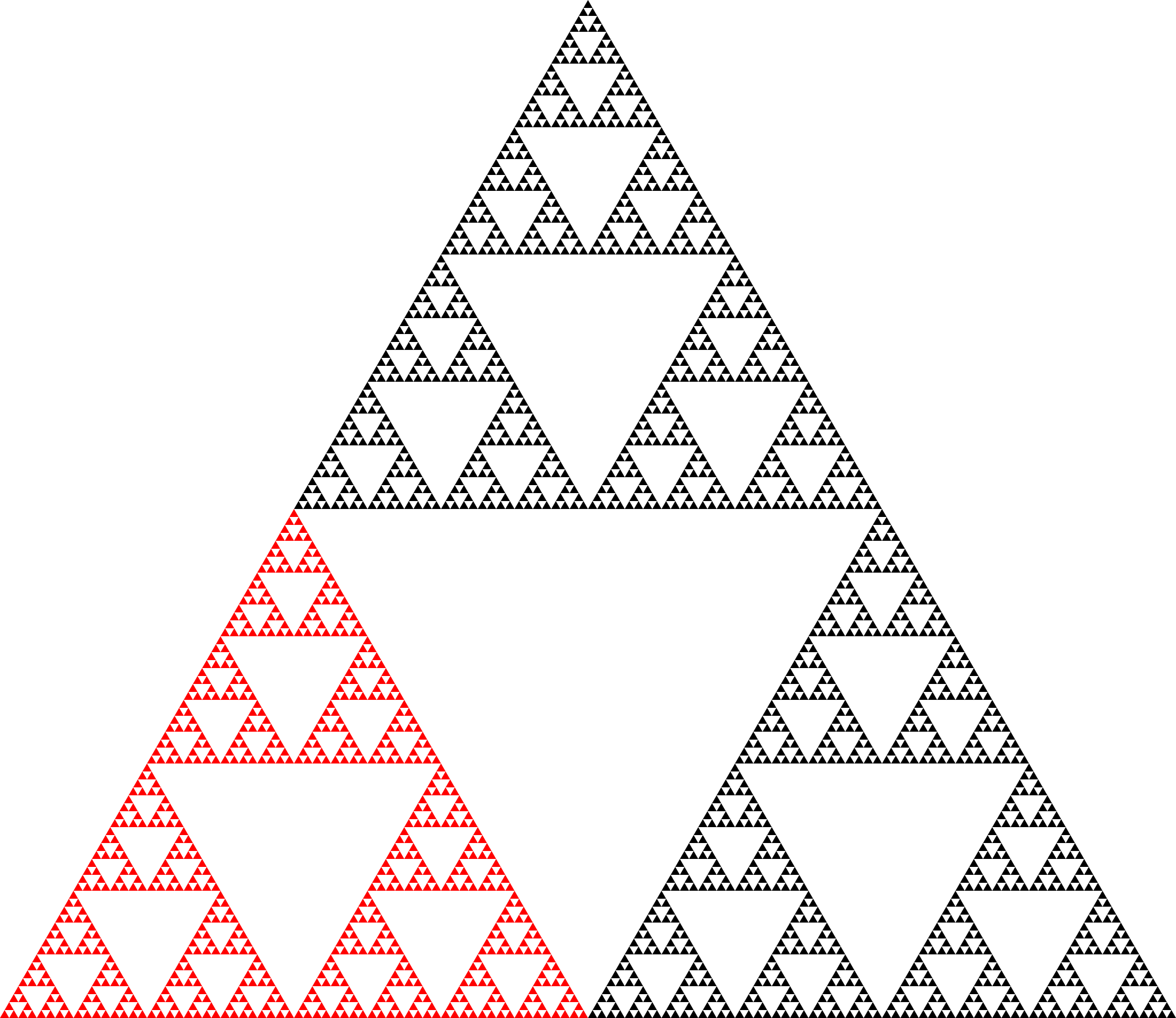}}{Image in public domain, available at \href{https://commons.wikimedia.org/wiki/File:SierpinskiTriangle.svg}{this link}, modified by the authors}
\hfill
\copyrightbox[b]{\includegraphics[width=.28\textwidth]{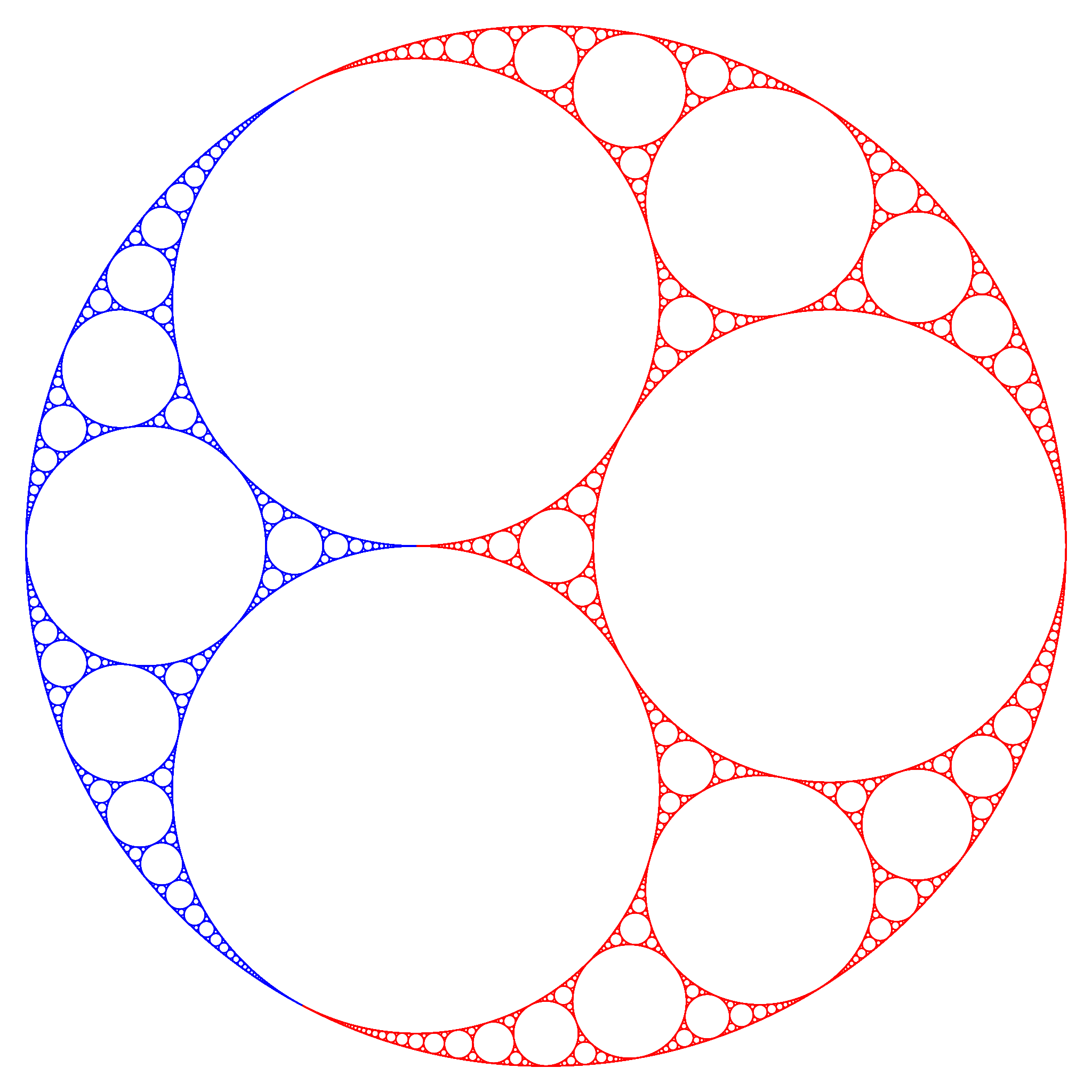}}{Image by Todd Stedl, \href{https://creativecommons.org/licenses/by-sa/4.0}{CC BY-SA 4.0}, available at \href{https://commons.wikimedia.org/wiki/File:ApollonianGasket-15_32_32_33.svg}{this link}, modified by the authors}
\caption{Cells for the replacement systems $\mathcal{A}$, $\mathcal{ST}$ and $\mathcal{AG}$.}
\label{fig.cells.airplane.sierpinski.triangle.and.apollonian.gasket}
\end{figure}

Clearly, if $e$ is a prefix of $f$, then $\llbracket e \rrbracket \supseteq \llbracket f \rrbracket$.
In fact, more can be shown:

\begin{proposition}
\label{prop.cells.contain.points}
Consider hyperedges $e, f \in \mathbb{L}$ such that none of the two is a prefix of the other (equivalently, $e$ and $f$ appear in some common hypergraph expansion).
Then the cells $\llbracket e \rrbracket$ and $\llbracket f \rrbracket$ both include some point that is not included in the other.
\end{proposition}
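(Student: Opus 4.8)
The plan is to find, for each of the two cells, a single point that provably lies outside the other; by symmetry it suffices to produce a point of $\llbracket e \rrbracket \setminus \llbracket f \rrbracket$. First I would note that, since neither $e$ nor $f$ is a prefix of the other, their cones are disjoint, $T_e \cap T_f = \emptyset$, because the prefixes of a single sequence are nested and hence comparable. The key reduction is that it suffices to exhibit a sequence $\alpha \in T_e$ whose $\sim$-class $[\alpha]$ meets $T_e$ but misses $T_f$: then $\llbracket \alpha \rrbracket \in \llbracket e \rrbracket$, while $\llbracket \alpha \rrbracket \in \llbracket f \rrbracket$ would require some $\beta \in T_f$ with $\beta \sim \alpha$, contradicting $[\alpha] \cap T_f = \emptyset$.

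To build such an $\alpha$, I would first record that $e$ and $f$, appearing in a common hypergraph expansion, share only finitely many boundary vertices; call this finite set $S \subseteq \mathbb{V}$. I claim there is an extension $e'' = e x_1 \cdots x_j \in \mathbb{L}$ of $e$ none of whose boundary vertices lies in $S$ (equivalently, whose cell shares no vertex with $f$, since any boundary vertex of $e''$ lying on $f$ is either interior to the cell of $e$, hence cannot lie on $f$, or is a genuine boundary vertex of $e$, hence in $S$). I would obtain $e''$ by expanding $e$ at most twice and invoking the (almost) expanding hypothesis (\cref{def.expanding}): the first expansion produces sub-hyperedges each incident on at most one boundary vertex of $e$ — the consequence of ``no two boundary vertices are adjacent'' already used in the proof of \cref{prop.gluing.vertices} — so any chosen child $e x_1$ meets $S$ in at most one vertex $w$; the second expansion, again using that the boundary vertices of the relevant replacement hypergraph are pairwise non-adjacent, lets me pick a grandchild $e x_1 x_2$ not incident on the corner $w$, whose remaining boundary vertices are then interior vertices of the cell of $e$ (created by its expansions) and therefore not boundary vertices of $e$ at all. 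Any trivial color encountered along the way is handled directly: a prefix of trivial color makes the extending sequence \emph{isolated}, and by \cref{prop.gluing.vertices}(3) its class is a singleton contained in $T_e \setminus T_f$, which already finishes the argument.

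With $e''$ in hand, I would take any $\alpha \in T_{e''} \subseteq T_e$ and verify $[\alpha] \cap T_f = \emptyset$. Suppose instead $\beta \in T_f$ satisfies $\beta \sim \alpha$; since $T_{e''} \cap T_f = \emptyset$ we have $\beta \neq \alpha$, so by \cref{prop.gluing.vertices}(3) and (4) both sequences are non-isolated and represent a common vertex $v$ of some depth $l$. The crux is a propagation principle: once $v$ has been created at depth $l$, a prefix not incident on $v$ has no descendant incident on $v$, so every prefix of a representative of $v$ of depth at least $l$ is incident on $v$. Applied to $\alpha$ and $\beta$, this forces $v$ to be a boundary vertex of both $e''$ and $f$ when $l \le \min(\dpt(e''), \dpt(f))$, contradicting the defining property of $e''$; in the remaining cases ($l$ exceeding one of the two depths) it forces $e''$ and $f$ to both be prefixes of a single representative, contradicting that neither is a prefix of the other. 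Hence no such $\beta$ exists and $\llbracket \alpha \rrbracket \in \llbracket e \rrbracket \setminus \llbracket f \rrbracket$.

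The main obstacle I expect is the construction of $e''$, not the final verification: one cannot in general avoid all shared corners with a single expansion — for instance, in $\mathcal{ST}$ every sub-triangle touches a corner — so the two-step argument and the careful use of the ``at most one boundary vertex per hyperedge'' consequence of \cref{def.expanding} are essential, as is the separate, easy treatment of trivial colors via isolated sequences. The depth-propagation bookkeeping in the last paragraph mirrors the reasoning already carried out in the proof of \cref{prop.gluing.vertices}(4), so I would keep it brief and refer back to it.
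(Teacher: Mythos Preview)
Your proof is correct and follows essentially the same approach as the paper: use the almost expanding hypothesis to descend two levels from $e$ to a hyperedge $e''$ sharing no boundary vertex with $f$, treat trivial colors via isolated sequences, and then invoke \cref{prop.gluing.vertices} to conclude. The only cosmetic difference is that the paper extends both $e$ and $f$ simultaneously to obtain non-adjacent $ex_1x_2$ and $fy_1y_2$, whereas you extend only $e$ and appeal to symmetry; your version is in fact more carefully argued, since you make explicit the depth-propagation step (``if $v$ has depth $l$ then the first $l$ symbols of any two representatives agree'') that the paper's terse ``are not adjacent, so such sequences correspond to distinct points'' leaves to the reader.
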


\begin{proof}
There exist $\alpha \in T_e$ and $\beta \in T_f$, where $T_e$ and $T_f$ are the cones of $e$ and $f$, respectively (recall \cref{def.cones}).
If one of them, say $\alpha$, is isolated, then by \cref{def.trivial.replacement.hypergraph}(3) $\llbracket \alpha \rrbracket$ has $\alpha$ as its unique address, so it cannot belong to $\llbracket f \rrbracket$.
Assume instead $T_e$ and $T_f$ do not feature isolated sequences (i.e., no element of $\mathbb{L}$ starting with $e$ or with $f$ is colored by a trivial color).
Then it is easy to see that, since the replacement system is almost expanding (\cref{def.expanding}), there exist $x_1, x_2, y_1, y_2 \in \mathbb{A}$ for which $e x_1 x_2, f y_1 y_2 \in \mathbb{L}$ are not adjacent.
In particular, sequences starting with $ex$ or $fy$ cannot represent the same vertex, so by \cref{def.trivial.replacement.hypergraph}(4) such sequences correspond to distinct points in the limit space.
\end{proof}

\begin{lemma}
\label{lem.trivial.cells}
A cell $\llbracket e \rrbracket$ is a singleton if and only if $e$ is colored by a trivial color (\cref{def.trivial.replacement.hypergraph}).
In this situation, we say that the cell is \textbf{trivial}.
\end{lemma}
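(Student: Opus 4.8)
The plan is to prove the two implications separately, with the forward direction (trivial color $\Rightarrow$ singleton) being immediate and the converse requiring one structural observation about replacement hypergraphs. For the easy direction, suppose $e$ is colored by a trivial color $k$. As observed in the discussion following \cref{def.trivial.replacement.hypergraph}, expanding a $k$-colored hyperedge via the trivial rule merely reproduces a single $k$-colored hyperedge with the same boundary vertices, so $e$ is the prefix of a unique element $\alpha$ of the symbol space. Hence the cone is $T_e = \{\alpha\}$ and therefore $\llbracket e \rrbracket = \{\llbracket \alpha \rrbracket\}$ is a singleton.

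For the converse I would argue by contraposition: assuming $k \coloneq c(e)$ is not trivial, I will exhibit two distinct points of $\llbracket e \rrbracket$. The crux of the argument, and the main place where the almost expanding hypothesis is used, is to show that the replacement hypergraph $R_k$ must contain at least two hyperedges. First, $R_k$ has at least one hyperedge, since by \cref{ass.hypergraphs} its $d_k \geq 2$ boundary vertices are not isolated, so each is incident on some hyperedge. If $R_k$ had exactly one hyperedge $y$, then (again having no isolated vertices) all $d_k \geq 2$ distinct boundary vertices of $R_k$ would be incident on $y$, so two boundary vertices of $R_k$ would be adjacent. By \cref{def.expanding} this can only happen when $k$ is an isolated color whose rule is trivial, contradicting our assumption. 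Hence $R_k$ contains two distinct hyperedges $y_1 \neq y_2$.

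With the branching secured, the conclusion follows by bookkeeping. Expanding $e$ shows that $e y_1, e y_2 \in \mathbb{L}$, and since these words have equal length with $y_1 \neq y_2$, neither is a prefix of the other. Applying \cref{prop.cells.contain.points} to the pair $e y_1, e y_2$ yields a point $p \in \llbracket e y_1 \rrbracket \setminus \llbracket e y_2 \rrbracket$ and a point $q \in \llbracket e y_2 \rrbracket \setminus \llbracket e y_1 \rrbracket$; these are distinct, and since $e$ is a prefix of both $e y_1$ and $e y_2$ they both lie in $\llbracket e \rrbracket \supseteq \llbracket e y_1 \rrbracket \cup \llbracket e y_2 \rrbracket$, so $\llbracket e \rrbracket$ is not a singleton. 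I expect the only delicate step to be ruling out single‑hyperedge replacement hypergraphs for non‑trivial colors; everything else is routine use of the cone/cell correspondence together with the citation of \cref{prop.cells.contain.points}.
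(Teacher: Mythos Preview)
Your proof is correct and follows essentially the same route as the paper's. Both directions match: the forward implication uses that a trivial color forces a unique infinite sequence with prefix $e$, and the converse establishes that $R_k$ has at least two hyperedges (you argue by contradiction from a single hyperedge hitting all boundary vertices; the paper argues directly that no hyperedge of $R_k$ meets two boundary vertices, so the $\geq 2$ boundary vertices force $\geq 2$ hyperedges), then both invoke \cref{prop.cells.contain.points} on the two resulting subcells.
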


\begin{proof}
If $e$ is colored by a trivial color then there is a unique sequence in $\Omega$ that starts with $e$, so $\llbracket e \rrbracket$ is a singleton.
Since the replacement system is almost expanding (\cref{def.expanding}), the color of $e$ (and thus of each hyperedge that has $e$ as a prefix) is isolated.
Hence, the unique sequence in $\Omega$ starting with $e$ is the sole address of the singleton $\llbracket e \rrbracket$.

Conversely, assume that the color $k$ of the hyperedge $e$ is not trivial.
Then, since the replacement system is almost expanding (\cref{def.expanding}), the replacement hypergraph $R_k$ does not feature hyperedges that are incident on multiple boundary vertices.
In particular, since there are at least two boundary vertices, $R_k$ features at least two distinct hyperedges $x, y \in \mathbb{A}$.
By \cref{prop.cells.contain.points}, $\llbracket ex \rrbracket$ and $\llbracket ey \rrbracket$ feature distinct points, so $\llbracket e \rrbracket$ is not a singleton.
\end{proof}

Each cell has one or more \textbf{boundary points}, which are those points that represent the boundary vertices of $e$.
The removal of the boundary points from a cell $\llbracket e \rrbracket$ produces the \textbf{cell interior}, which we denote by $\boldsymbol{\| e \|}$.
Note that $\| e \|$ is empty if and only if $\llbracket e \rrbracket$ is a trivial cell (in the sense of \cref{lem.trivial.cells}).

In general, the cell interior $\|e\|$ is not the same as the topological interior of the cell $\llbracket e \rrbracket$, which we instead denote by $\boldsymbol{\llparenthesis e \rrparenthesis}$.

\begin{remark}
\label{rmk.cell.and.topological.interior}
For each hyperedge $e \in \mathbb{L}$, the following inclusions always hold
\[ \| e \| \subseteq \llparenthesis e \rrparenthesis \subseteq \llbracket e \rrbracket \]
and $\llparenthesis e \rrparenthesis$ features some (possibly all, possibly none) of the boundary points of $\llbracket e \rrbracket$.

More precisely, by definition the boundary points of a cell $\llbracket e \rrbracket$ never belong to the cell interior $\| e \|$.
Instead, a boundary point $p$ belongs to the topological interior $\llparenthesis e \rrparenthesis$ of the cell $\llbracket e \rrbracket$ if and only if each address of $p$ has $e$ as a prefix, i.e., if $p$ is not ``glued'' to any cell contained in the complement of $\|e\|$.
For example, consider the cell $\llbracket X 1 \rrbracket$ of the Sierpinski triangle, which is depicted in \cref{fig.cells.airplane.sierpinski.triangle.and.apollonian.gasket}:
its topological interior $\llparenthesis X 1 \rrparenthesis$ includes the leftmost vertex of the triangle, while the cell interior $\| X 1 \|$ does not.
This can be easily proved using the upcoming \cref{rmk.stars.are.basis}.
\end{remark}

\subsection{Points of the Limit Space}

In this subsection, we classify points of limit spaces into three types. Let us start with isolated points.

\begin{lemma}
\label{lem.isolated.points}
A point of the limit space is isolated if and only if it has an address that is an isolated sequence, which is then unique by \cref{prop.gluing.vertices}(3).
\end{lemma}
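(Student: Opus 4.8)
The plan is to work directly with the quotient map $q \colon \Omega \to \Omega / {\sim}$ and with the characterization of isolated points as those $p$ for which $\{p\}$ is open, equivalently for which the full preimage $q^{-1}(\{p\})$---which is exactly the $\sim$-class of any address of $p$---is open in $\Omega$. Throughout I will use that the cones $T_w$ ($w \in \mathbb{L}$) form a basis of clopen sets for $\Omega$ (recall \cref{def.cones}), together with the fact that a cone $T_w$ is a singleton precisely when $w$ is colored by a trivial color, in which case its unique sequence is isolated, as recorded in \cref{rmk.isolated.sequences}.

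For the ``if'' direction, suppose some address $\alpha$ of $p$ is an isolated sequence. By \cref{rmk.isolated.sequences} it has a prefix $w$ of trivial color and is the sole sequence starting with $w$, so $T_w = \{\alpha\}$ is open. By \cref{prop.gluing.vertices}(3) the sequence $\alpha$ is glued to nothing else, hence its class is $\{\alpha\} = T_w$; thus $q^{-1}(\{p\}) = T_w$ is open and $p$ is isolated, the uniqueness of the address being \cref{prop.gluing.vertices}(3) once more.

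For the ``only if'' direction I would argue the contrapositive: assuming no address of $p$ is isolated, I fix a (non-isolated) address $\alpha$ and show its class $[\alpha] = q^{-1}(\{p\})$ is not open, by exhibiting inside each basic cone $T_w$ containing $\alpha$ a sequence $\gamma \notin [\alpha]$. Since $\alpha$ is non-isolated, no prefix of it has trivial color, so by \cref{lem.trivial.cells} and its proof every such cone contains at least two points; this already settles the case where $\alpha$ represents no vertex, since then \cref{prop.gluing.vertices}(3)--(4) force $[\alpha] = \{\alpha\}$. When $\alpha$ represents a vertex $v$, for each prefix $w = x_1 \dots x_m$ incident on $v$ I would locate a hyperedge $wf$ not incident on $v$: the replacement hypergraph $R_{c(w)}$ has at least two boundary vertices, and a boundary vertex distinct from the one identified with $v$ is non-isolated by \cref{ass.hypergraphs}, hence incident on some hyperedge $f$; the almost expanding hypothesis (\cref{def.expanding}) forbids $f$ from touching two boundary vertices, so $f$ is not incident on $v$. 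Extending $wf$ to any infinite walk $\gamma$ (which exists since every vertex of $\Gamma_C$ emits an edge) produces a sequence none of whose prefixes beyond $wf$ is incident on $v$, by the incidence-monotonicity already used in the proof of \cref{prop.gluing.vertices}: once a hyperedge misses $v$, so do all of its expansions. Hence $\gamma$ does not represent $v$, so by \cref{prop.gluing.vertices}(3)--(4) we have $\gamma \not\sim \alpha$, i.e. $\gamma \notin [\alpha]$. As the cones $T_{x_1 \dots x_m}$ with large $m$ are cofinal among basic neighborhoods of $\alpha$, this shows $[\alpha]$ contains no neighborhood of $\alpha$ and is therefore not open.

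I expect the main obstacle to be precisely this last step: one must simultaneously produce a nearby escaping address and certify that it leaves $v$ forever, which is where the almost expanding condition and the incidence-monotonicity of expansions do the essential work. The other direction and the vertex-free case are short topological observations built on \cref{prop.gluing.vertices} and \cref{rmk.isolated.sequences}.
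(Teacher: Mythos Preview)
Your ``if'' direction and overall strategy for the converse are correct; the paper takes a slightly different route for the converse, exhibiting a sequence of singular points $p_m \to \llbracket \alpha \rrbracket$ built from the internal (non-boundary) vertices that the almost expanding hypothesis forces into each $R_{c(x_1\cdots x_m)}$, rather than arguing directly that $q^{-1}(\{p\})$ fails to be open.

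There is a small gap in your vertex case. You assume you can pick ``a boundary vertex distinct from the one identified with $v$'', but $\lambda_w$ need not be injective: if every boundary slot of $w$ equals $v$ (e.g.\ $w$ is a loop at $v$), then \emph{all} boundary vertices of $R_{c(w)}$ are identified with $v$ and no such choice exists; your one-step escape $wf$ may then still be incident on $v$. The repair is immediate and already implicit in your regular-point case: since $c(w)$ is non-trivial, \cref{lem.trivial.cells} gives that $\llbracket w \rrbracket$ has at least two points, so $T_w$ meets at least two $\sim$-classes and hence is not contained in $[\alpha]$. Invoking this uniformly eliminates both the case split and the gap.
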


\begin{proof}
Suppose that a sequence $\alpha$ is isolated (\cref{rmk.isolated.sequences}).
Then, by \cref{def.expanding}, $\alpha$ admits a prefix $x_1 \dots x_m$ that is an isolated hyperedge whose color is trivial.
Then the cell $\llbracket x_1 \dots x_m \rrbracket$ is the singleton $\llbracket \alpha \rrbracket$ by \cref{lem.trivial.cells}.
The preimage of $\llbracket \alpha \rrbracket$ is the singleton $\{ \alpha \}$, which is an open subset of the symbol space, so the singleton $\llbracket \alpha \rrbracket$ is isolated.

Conversely, if $\alpha = x_1 x_2 \dots$ is not an isolated sequence, then by \cref{def.expanding} for each $m \in \mathbb{N}$ the color of each prefix $x_1 \dots x_m$ admits at least a vertex $v_m$ that is not a boundary vertex.
Such vertices correspond to distinct points $p_m$ of the limit space that converge to $\llbracket \alpha \rrbracket$, which is thus not an isolated point.
\end{proof}

Thanks to this Lemma and in light of \cref{prop.gluing.vertices}, it makes sense to distinguish the points of a limit space in the three following types.

\begin{definition}
\label{def.singular.isolated.regular.points}
There are three distinct types of points in a limit space:
\begin{itemize}
    \item \textbf{singular point} are points whose addresses represent one vertex;
    \item \textbf{isolated points} are points whose addresses are isolated sequences;
    \item \textbf{regular points} are points whose addresses do not represent any vertex.
\end{itemize}
\end{definition}

\subsection{Topological Properties of the Limit Space}

The following result generalizes Lemma 1.26 of \cite{BF19} to our setting and it is all we need to show \cref{thm.limit.space.is.compact.metrizable}, as explained in \cref{sub.limit.space}.

\begin{lemma}
\label{lem.topology.limit.space}
The limit space of an almost expanding replacement system enjoys the following properties.
\begin{enumerate}
    \item Each singleton is closed.
    \item Each cell is closed.
    \item Each cell interior is open.
\end{enumerate}
\end{lemma}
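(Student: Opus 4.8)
The plan is to work entirely with the quotient map $q \colon \Omega \to \Omega/\!\sim$ and to exploit the defining property of the quotient topology: a subset of the limit space is closed (resp.\ open) exactly when its $q$-preimage is closed (resp.\ open) in the symbol space $\Omega$. Recall that $\Omega$ is compact and metrizable and that the cones $T_e$ ($e \in \mathbb{L}$) form a basis of clopen sets. For a vertex $v \in \mathbb{V}$, write $\widehat{v} \subseteq \Omega$ for the set of addresses that represent $v$. The whole argument rests on a single claim, which I expect to be the main obstacle:

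\emph{Key claim: $\widehat{v}$ is closed in $\Omega$ for every $v \in \mathbb{V}$.}

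To prove the claim I would show that $\Omega \setminus \widehat{v}$ is open. Fix $\beta = y_1 y_2 \dots \in \Omega$ that does not represent $v$, so that infinitely many prefixes $w_m = y_1 \dots y_m$ fail to be incident on $v$. Since $v$ corresponds to a \emph{finite} word in $\mathbb{L} \cup \{\varepsilon\}$, for all sufficiently large $m$ the vertex $v$ does not arise inside the expansion of $w_m$ (equivalently, $v$ does not lie inside the cell of $w_m$). Choose $M$ large enough that $w_M$ is not incident on $v$ and $v$ does not lie inside $w_M$. Then every boundary vertex of any hyperedge $w_M z_1 \dots z_k$ is either a boundary vertex of $w_M$ (hence not $v$) or a vertex created strictly inside $w_M$ (hence not $v$); consequently no prefix of length $\geq M$ of any $\gamma \in T_{w_M}$ is incident on $v$, so no such $\gamma$ represents $v$. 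Thus the cone $T_{w_M}$ is a neighborhood of $\beta$ disjoint from $\widehat{v}$. The delicate point here — and the reason the naive ``once non-incident, always non-incident'' fails — is the presence of internal vertices, which is precisely why one must descend below the depth of the finite word representing $v$ before choosing $M$.

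With the claim in hand, statement (1) follows since $q^{-1}(\{\llbracket\alpha\rrbracket\})$ equals the gluing class $[\alpha]$ of $\alpha$, and by \cref{prop.gluing.vertices} this class is $\{\alpha\}$ when $\alpha$ is regular or isolated and equals $\widehat{v}$ when $\alpha$ represents a vertex $v$ (a non-isolated representative of $v$ forbids any isolated one, so the two cases do not overlap); hence it is closed in every case. For (2), I would compute the saturation $q^{-1}(\llbracket e\rrbracket)=\bigcup_{\alpha\in T_e}[\alpha]$: for $\alpha \in T_e$ the class $[\alpha]$ lies inside $T_e$ unless $\alpha$ represents a boundary vertex $v$ of $e$, because an internal vertex of $e$ has only addresses starting with $e$ (same nesting argument as above). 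Since each boundary vertex of $e$ does have a representative inside $T_e$, this yields
\[ q^{-1}(\llbracket e \rrbracket) = T_e \cup \bigcup_{v} \widehat{v}, \]
the union ranging over the finitely many boundary vertices $v$ of $e$; this is a finite union of closed sets, hence closed, proving that $\llbracket e \rrbracket$ is closed.

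Finally, for (3), the boundary points of $\llbracket e\rrbracket$ are exactly the points representing the boundary vertices of $e$, so their preimage is $\bigcup_{v}\widehat{v}$, and therefore
\[ q^{-1}(\|e\|) = q^{-1}(\llbracket e\rrbracket) \setminus \bigcup_{v} \widehat{v} = T_e \setminus \bigcup_{v} \widehat{v}, \]
which is open as the intersection of the open set $T_e$ with the complement of a finite union of closed sets. (When $e$ has trivial color all three assertions degenerate to statements about a single isolated point and hold trivially.) In summary, every topological assertion reduces to the closedness of the address sets $\widehat{v}$ together with routine preimage bookkeeping, with the closedness of $\widehat{v}$ being the one genuinely substantive step.
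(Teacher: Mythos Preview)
Your proof is correct and follows essentially the same route as the paper's: both arguments reduce everything to the closedness of the address set $\widehat{v}$ of a vertex, then compute $q^{-1}(\llbracket e\rrbracket)=T_e\cup\bigcup_v\widehat{v}$ and $q^{-1}(\|e\|)=T_e\setminus\bigcup_v\widehat{v}$ exactly as you do. The only cosmetic difference is in the key step: the paper exhibits $\widehat{v}$ directly as the intersection $\bigcap_{i>m}\bigl(\bigcup\{T_e:\dpt(e)=i,\ e\text{ incident on }v\}\bigr)$ of finite unions of clopen cones (with $m$ the depth of $v$), whereas you prove the complement is open by finding a cone neighbourhood --- these are dual formulations of the same observation, and your explicit discussion of why one must descend below $\dpt(v)$ before choosing the cone is in fact a point the paper leaves implicit.
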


\begin{proof}
Each singular point represents exactly one vertex and each isolated point represents a finite amount of vertices (the boundary vertices of the trivially-colored isolated hyperedge that represent the point by \cref{lem.isolated.points}).
Exactly as observed in \cite[Lemma 1.26]{BF19}, note that a point $p$ that represents a vertex $v$ has the following preimage under the quotient map:
\[ \bigcap_{i > m} \left( \bigcup \left\{ \llbracket e \rrbracket \mid \dpt(e)=i \text{ and } e \text{ is incident on } v \right\} \right) \subseteq \Omega. \]
Since each cell is closed and each union of the previous equation is finite, this set is closed.
Then, for a singular and isolated point $p$, the singleton $\{p\}$ is the continuous preimage of a closed set, so it is closed.
The preimage of a regular point $p$ is a single element of $\Omega$, so the singleton $\{p\}$ is closed too.

Statement (2) is proved simply by noting that the preimage of $\llbracket e \rrbracket$ under the quotient map is the union of the cone $T_e$ (\cref{def.cones}) with the preimages under the quotient map of the (finitely many) points representing the boundary vertices of $e$.
This is closed because it is the union of finitely many closed set.

A similar reasoning proves statement (3).
The preimage under the quotient map of a cell interior $\| e \|$ is the cone $T_e$ minus the preimages of the points representing the boundary vertices of $e$.
Since cones are open and preimages of singletons are closed, the preimage of $\|e\|$ is open, thus $\|e\|$ itself is open.
\end{proof}

It is easy to check the following useful facts about limit spaces.
\begin{enumerate}
    \item If $e$ is a prefix of $f$ then $\llbracket e \rrbracket \supseteq \llbracket f \rrbracket$ and the converse holds as soon as $\llbracket e \rrbracket$ is not a trivial cell (in the sense of \cref{lem.trivial.cells}).
    \item If neither $e$ nor $f$ is a prefix of the other, then the topological interiors $\llparenthesis e \rrparenthesis$ and $\llparenthesis f \rrparenthesis$ are disjoint.
\end{enumerate}

\begin{remark}
The connectedness of the limit space is inherited by that of the base and replacement hypergraphs.
More precisely, we have the following facts, the first two of which are precisely the same of \cite[Remark 1.27]{BF19}.
\begin{enumerate}
    \item If each replacement hypergraph is connected, then every cell is connected.
    In particular, in this case the limit space has finitely many connected components, the amount of which is precisely the number of connected components of the base hypergraph.
    \item The limit space is totally disconnected if and only if, for every color, its replacement hypergraph $R$ admits an expansion in which the boundary vertices of $R$ all lie in distinct connected components.
    \item Since a point of the limit space is isolated if and only if it descends from an isolated sequence (\cref{lem.isolated.points}), limit spaces of expanding replacement systems do not have isolated points.
\end{enumerate}
\end{remark}

\begin{remark}
\label{rmk.stars.are.basis}
Reading along the lines of the proof of \cite[Theorem 1.25]{BF19} (which promptly extends to a proof of our \cref{thm.limit.space.is.compact.metrizable} as explained in \cref{sub.limit.space}), we find that a basis for the topology of the limit space is given by the set of all $\boldsymbol{d}$-\textbf{star} centered in $p$, which are sets of the following form:
\[ \boldsymbol{S_d (p)} = \bigcup \{ \| e \| \mid \dpt(e) = d \text{ and } p \in \llbracket e \rrbracket \} \cup \{ p \}. \]

With this in mind, one can easily show that a point $p$ belongs to the topological interior $\llparenthesis e \rrparenthesis$ of a cell if and only if each address of $p$ has $e$ as a prefix, which proves \cref{rmk.cell.and.topological.interior}.
\end{remark}

\subsection{Rationality of the Gluing Relation}
\label{sub.rational.gluing}

In \cite{rationalgluing}, the same authors proved that the gluing relation of an expanding edge replacement system from \cite{BF19} (limited to $2$-edges) is \textbf{rational}, i.e., it is recognized by a finite state automaton.
The paper explicitly describes a construction of the automaton based on the replacement system.
Essentially, the automaton works by keeping track of the type of adjacency between equal-length finite prefixes of two sequences.

It is not hard to see that the construction of \cite{rationalgluing} can be extended to work with the expanding \textit{hyperedge} replacement systems introduced here.
To do so, one needs to assign additional colors that keep track of whether an hyperedge $(v_1, \dots, v_n)$ has repetitions among its boundary vertices, i.e., whether there are $i \neq j$ such that $v_i = v_j$ (this generalizes the assignment of special colors to loops explained in \cite[Subsection 1.1.1]{rationalgluing}).
Moreover, it is easy to see that the construction also works with \textit{almost expanding} hyperedge replacement systems without any modification, since there is no hyperedge adjacency to check for isolated hyperedges.

Ultimately, although we will not fully develop the details here, the same argument of \cite{rationalgluing} shows that the following fact.

\begin{theorem}
The gluing relation of an almost expanding replacement system is rational.
\end{theorem}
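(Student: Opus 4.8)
The plan is to adapt the finite-automaton construction of \cite{rationalgluing} to the broader setting of almost expanding \emph{hyperedge} replacement systems. The idea is to build a synchronous automaton that reads two sequences $\alpha = x_1 x_2 \dots$ and $\beta = y_1 y_2 \dots$ of $\Omega$ one pair of letters $(x_m, y_m)$ at a time, and whose state after $m$ steps records the \emph{adjacency data} of the two prefixes $p = x_1 \dots x_m$ and $q = y_1 \dots y_m$. Concretely, a state encodes the pair of colors $k_1 = c(p)$ and $k_2 = c(q)$ together with the relation
\[ \rho = \{ (i,j) \mid \lambda_p(i) = \lambda_q(j) \} \subseteq \{1,\dots,d_{k_1}\} \times \{1,\dots,d_{k_2}\}, \]
which records which boundary vertices of the two prefixes coincide, plus a boolean flag recording whether $p = q$ (i.e.\ whether we are still on the diagonal). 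Since there are finitely many colors and the orders $d_k$ are bounded, there are only finitely many such states.

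First I would set up the transition function. Reading a new pair $(x, y)$ with $x \in E_{R_{k_1}}$ and $y \in E_{R_{k_2}}$ passes from $p, q$ to $px, qy$, and one must compute the new relation $\rho'$ from $\rho$, the flag, and the purely local incidence data of $x$ in $R_{k_1}$ and of $y$ in $R_{k_2}$. The observation that makes this well defined is that each boundary vertex of $px$ is either a boundary vertex of $R_{k_1}$ --- hence identified with a boundary vertex of $p$ already recorded in $\rho$ --- or an internal vertex of $R_{k_1}$ freshly created by the expansion. A fresh internal vertex from expanding $p$ can agree with a (necessarily also fresh) boundary vertex of $qy$ only when $p = q$ and the two internal vertices are the same vertex of $R_{k_1}$, since fresh vertices are never identified with pre-existing ones nor across distinct expanded hyperedges. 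Thus $\rho'$ is obtained by pulling $\rho$ back through the incidences of $x$ and $y$ (for coincidences of old boundary vertices) together with the sibling coincidences internal to $R_{k_1}$ when on the diagonal; the flag is switched off as soon as $x \neq y$.

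Next I would fix the acceptance condition. By \cref{def.gluing.relation}, $\alpha \sim \beta$ holds exactly when the two equal-length prefixes share a boundary vertex at every depth, i.e.\ when $\rho \neq \emptyset$ at every step of the run, so the automaton should accept $(\alpha,\beta)$ precisely when its run never reaches a state with $\rho = \emptyset$ --- a safety condition recognized by a finite automaton. Two technical points remain. First, hyperedges with repeated boundary vertices (generalizing loops in graphs) must be handled, since for such hyperedges $\rho$ alone does not faithfully describe adjacency; following \cite[Subsection 1.1.1]{rationalgluing}, I would refine the coloring by additional colors recording, for each hyperedge $(v_1,\dots,v_n)$, the partition of $\{1,\dots,n\}$ induced by $v_i = v_j$, so that the transition above becomes unambiguous. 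Second, isolated hyperedges carry no adjacency to neighbouring cells, so they require no extra bookkeeping; together with \cref{prop.gluing.vertices}(3), which guarantees that isolated sequences are glued only to themselves, this is why the construction passes from expanding to almost expanding systems without modification.

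The hard part is the transition bookkeeping in the presence of repeated boundary vertices: one must check that the refined colors make $\rho'$ a genuine function of the current state and the input pair, and that the refinement stays finite (which it does, being by finitely many partitions of bounded sets). The correctness of the acceptance condition is then immediate from \cref{def.gluing.relation}, while \cref{prop.gluing.vertices} explains the underlying phenomenon --- that persistent adjacency of non-isolated sequences forces the shared vertex to stabilize --- which is exactly the finiteness that the automaton exploits.
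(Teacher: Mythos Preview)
Your proposal is correct and follows essentially the same approach as the paper: the paper itself does not give a full proof but only a sketch, explicitly deferring to the construction of \cite{rationalgluing} and noting the two modifications needed --- refining colors to track repetitions among boundary vertices of a hyperedge (generalizing the loop-color trick), and observing that isolated hyperedges require no extra adjacency bookkeeping. Your write-up is in fact more detailed than the paper's own discussion, spelling out the state space (colors, the incidence relation $\rho$, and the diagonal flag) and the acceptance condition, all of which match the $q_0$/$q_1$-state description and the example automaton the paper provides.
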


For example, \cref{fig.sierpinski.triangle.gluing.automaton,} depict the automaton that recognizes the gluing relations of the replacement system $\mathcal{ST}$.

\begin{figure}
\centering
\begin{tikzpicture}[scale=.85,font=\small]
    \node[state] (start) at (-2.5,4) {$q_{-1}$};
    \node[state] (q0) at (1,4) {$q_0$};
    \node[state] (2-3) at (-5,.5) {$q_1 \big(\begin{smallmatrix} 1 & 2 & 3 \\ 0 & 3 & 0 \end{smallmatrix}\big)$};
    \node[state] (3-2) at (-3,-.5) {$q_1 \big(\begin{smallmatrix} 1 & 2 & 3 \\ 0 & 0 & 2 \end{smallmatrix}\big)$};
    \node[state] (2-1) at (-1,.5) {$q_1 \big(\begin{smallmatrix} 1 & 2 & 3 \\ 0 & 1 & 0 \end{smallmatrix}\big)$};
    \node[state] (1-2) at (1,-.5) {$q_1 \big(\begin{smallmatrix} 1 & 2 & 3 \\ 2 & 0 & 0 \end{smallmatrix}\big)$};
    \node[state] (3-1) at (3,.5) {$q_1 \big(\begin{smallmatrix} 1 & 2 & 3 \\ 0 & 0 & 1 \end{smallmatrix}\big)$};
    \node[state] (1-3) at (5,-.5) {$q_1 \big(\begin{smallmatrix} 1 & 2 & 3 \\ 3 & 0 & 0 \end{smallmatrix}\big)$};
    \draw[edge] (-3.5,4) to (start);
    \draw[edge] (start) to node[above]{\small$(X,X)$} (q0);
    \draw[edge] (q0) to[out=-10,in=45,looseness=9] node[right]{\small$(i,i)$} (q0);
    \draw[edge] (q0) to[out=255,in=0] (0,3) to node[above]{\small$(3,2)$} (-3.5,3) to[out=180,in=90] (2-3);
    \draw[edge] (q0) to[out=260,in=0] (0,2.25) to node[above]{\small$(2,3)$} (-2,2.25) to[out=180,in=90] (3-2);
    \draw[edge] (q0) to[out=265,in=0] (0,1.5) to node[above]{\small$(1,2)$} (-.5,1.5) to[out=180,in=90] (2-1);
    \draw[edge] (q0) to node[below right]{\small$(2,1)$} (1-2);
    \draw[edge] (q0) to[out=275,in=90] node[right]{\small$(1,3)$} (3-1);
    \draw[edge] (q0) to[out=280,in=180] (2,3) to node[above]{\small$(3,1)$} (3.5,3) to[out=0,in=90] (1-3);
    \draw[edge] (2-3) to[out=250,in=290,looseness=8] node[below]{\small$(2,3)$} (2-3);
    \draw[edge] (1-2) to[out=250,in=290,looseness=8] node[below]{\small$(1,2)$} (1-2);
    \draw[edge] (1-3) to[out=250,in=290,looseness=8] node[below]{\small$(1,3)$} (1-3);
    \draw[edge] (2-1) to[out=250,in=290,looseness=8] node[below]{\small$(2,1)$} (2-1);
    \draw[edge] (3-1) to[out=250,in=290,looseness=8] node[below]{\small$(3,1)$} (3-1);
    \draw[edge] (3-2) to[out=250,in=290,looseness=8] node[below]{\small$(3,2)$} (3-2);
\end{tikzpicture}
\caption{The gluing automaton for $\mathcal{ST}$.}
\label{fig.sierpinski.triangle.gluing.automaton}
\end{figure}
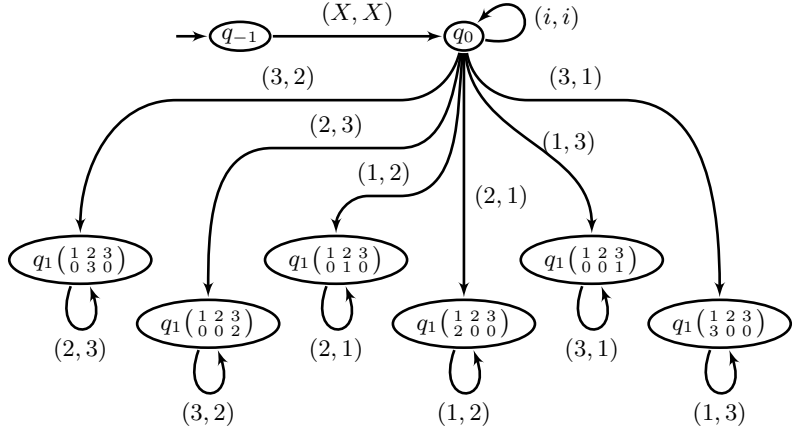

For the generalization from graphs to hypergraphs, the main distinction from \cite{rationalgluing} is that $q_1$-states have the form
\[ q_1 \big(\begin{smallmatrix} c(a) & 1 & \dots & n \\ c(b) & i_1 & \dots & i_n \end{smallmatrix}\big), \]
signifying that the first hyperedge $x_1 \dots x_m a$ is an $n$-edge whose $j$-th boundary vertex coincides with the $i_{j}$-th boundary vertex of the second hyperedge $y_1 \dots y_m b$, for $j = 1, \dots, n$ (in absence of multiple colors, such as in the previous examples, the first column is omitted).

\subsection{Homeomorphisms of the Limit Space}

The fact that limit spaces come equipped with a natural coding through the symbol space encourage us to consider those homeomorphisms of limit spaces that arise from homeomorphisms of the symbol space.
In fact, as shown here in \cref{prop.homeo.symbol.space.acts.on.limit.space}, this defines a faithful action by homeomorphisms of the group of homeomorphisms of the symbol space on the limit space.

\begin{lemma}
\label{lem.cell.contains.isolated.regular.point}
Each cell contains an isolated point or a regular point.
\end{lemma}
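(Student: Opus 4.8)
The plan is to split on whether the cell contains an isolated point, reducing everything to a single construction. First I would observe that if some hyperedge $f \in \mathbb{L}$ having $e$ as a prefix is colored by a trivial color, then by \cref{lem.trivial.cells} the cell $\llbracket f \rrbracket \subseteq \llbracket e \rrbracket$ is a singleton whose unique address is an isolated sequence, hence an isolated point by \cref{lem.isolated.points}; this already settles the statement. The substantial case is therefore when no extension of $e$ is colored by a trivial color, equivalently (by \cref{rmk.isolated.sequences}) when the cone $T_e$ contains no isolated sequence. In this case I would produce a \emph{regular} point, i.e.\ a sequence $\alpha \in T_e$ extending $e$ that represents no vertex, so that $\llbracket \alpha \rrbracket$ is regular by \cref{def.singular.isolated.regular.points}.

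The heart of the argument is the following escape claim: for every non-trivial hyperedge $f \in \mathbb{L}$ there is a finite extension $fw \in \mathbb{L}$ (indeed with $w$ of length two) that shares no boundary vertex with $f$. Granting this, I would build $\alpha$ by iteration: set $f_0 = e$ and, having defined $f_n$, let $f_{n+1} = f_n w_n$ be an extension sharing no boundary vertex with $f_n$, which exists since all extensions of $e$ are non-trivial in the case at hand. Setting $\alpha = \lim_n f_n \in T_e$, I claim $\alpha$ represents no vertex: if it represented some $v$, then all sufficiently deep prefixes of $\alpha$ would be incident on $v$, yet two consecutive checkpoints $f_n, f_{n+1}$ of large depth would both be incident on $v$ while sharing no boundary vertex, a contradiction.

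To prove the escape claim I would invoke two structural facts guaranteed for a non-trivial color $k$ by \cref{ass.hypergraphs} and \cref{def.expanding}: every boundary vertex of $R_k$ is incident on at least one hyperedge (no isolated vertices), and no hyperedge of $R_k$ is incident on two boundary vertices (non-adjacency). Writing the boundary vertices of $f$ as $\lambda_f(1), \dots, \lambda_f(d)$ with $d = \mathrm{ord}(f) \geq 2$, I would first extend $f$ by a hyperedge $y$ of $R_{c(f)}$ incident on the boundary vertex in position $2$ and on no other boundary vertex; then $fy$ is incident on $\lambda_f(2)$ but on none of $\lambda_f(1), \lambda_f(3), \dots, \lambda_f(d)$, and crucially a boundary vertex once dropped never reappears in deeper prefixes, since the non-inherited boundary vertices of $fy$ are interior vertices of $R_{c(f)}$. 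As $\lambda_f(2)$ occupies a single position $p$ of $fy$, I would then extend $fy$ by a hyperedge incident on a position different from $p$ and on no other boundary vertex, so that $fyz$ is incident on none of the $\lambda_f(i)$, which is exactly the claim.

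The main obstacle is precisely this escape claim, because a single expansion need not move off a prescribed corner (the replacement hypergraph may be ``star-shaped'' at a boundary vertex), so the delicate point is to control the finitely many boundary vertices of $f$ simultaneously. The two-step maneuver — first discarding every boundary vertex but one by heading toward a chosen corner, then discarding the last one by heading toward a different corner of the resulting sub-cell — resolves this, and it is the only place where both hypotheses of \cref{ass.hypergraphs} (no isolated vertices) and \cref{def.expanding} (non-adjacency of boundary vertices) are genuinely used.
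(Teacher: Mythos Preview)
Your argument is correct and follows the paper's line exactly: reduce to the case where no extension of $e$ has trivial color, then invoke the almost-expanding hypothesis to produce a sequence representing no vertex. The paper's own proof merely asserts this last step without construction (leaning on the two-step escape already hinted at in the proof of \cref{prop.cells.contain.points}), so your explicit iteration is a faithful and more detailed rendering of the same idea. One minor point: the assertion that $\lambda_f(2)$ occupies a \emph{single} position of $fy$ can fail if $R_{c(f)}$ admits a hyperedge with repeated incidences at $\lambda_{R_{c(f)}}(2)$ (a loop there, say); choosing $y$ incident on an interior vertex of $R_{c(f)}$ instead---one always exists for non-trivial colors---guarantees $fy$ has a position off the boundary of $f$, and your second step then goes through verbatim.
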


\begin{proof}
Consider a cell $\llbracket e\rrbracket$.
If it contains an isolated point there is nothing to prove, so assume that it does not.
Then, by \cref{lem.isolated.points}, sequences of $\Omega$ starting with $e$ cannot be isolated.
Thus, as noted in \cref{rmk.isolated.sequences}, finite words starting with $e$ cannot be colored by trivial colors.
It follows that, since the replacement system is almost expanding (\cref{def.expanding}), the colors appearing among finite words starting with $e$ are such that no two boundary vertices are adjacent in their replacement hypergraphs.
In particular, this implies that $\llbracket e \rrbracket$ contains points whose addresses are sequences that represent no vertices,
which are regular points by their very definition (\cref{def.singular.isolated.regular.points}).
\end{proof}

\begin{proposition}
The set consisting of the regular and the isolated points of a limit space is dense in the limit space.
\end{proposition}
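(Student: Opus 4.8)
The plan is to show that the set of singular points (\cref{def.singular.isolated.regular.points}) has empty interior, which is exactly the assertion that the complementary set $D$ of regular and isolated points is dense. I would argue by contradiction: writing $X$ for the limit space, if $D$ were not dense then $U \coloneq X \setminus \overline{D}$ would be a nonempty open set disjoint from $D$, so \emph{every} point of $U$ would be singular. The whole argument then reduces to deriving a contradiction from the existence of such a $U$.

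The first key step is to observe that $U$ must be countable. The three types of \cref{def.singular.isolated.regular.points} form a genuine partition of the points of $X$: by \cref{lem.isolated.points} and \cref{prop.gluing.vertices} a point either has a (unique) isolated address, or all of its addresses are non-isolated and, by \cref{prop.gluing.vertices}(1), represent one and the same vertex, or none. Hence each singular point $p \in U$ is represented only by non-isolated sequences representing a single vertex $v(p) \in \mathbb{V}$. This defines a map $U \to \mathbb{V}$, and \cref{prop.gluing.vertices}(4) shows it is injective, since two distinct non-isolated sequences that represent the same vertex are glued and therefore define the same point. As $\mathbb{V}$ is countable (the words of $\mathbb{L}$ are countable and each base or replacement hypergraph has finitely many vertices), the set $U$ is countable.

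The second key step is a Baire-category argument. By \cref{thm.limit.space.is.compact.metrizable} the limit space is compact and metrizable, so the open subspace $U$ is completely metrizable and hence a Baire space. A nonempty countable Baire space cannot be the union of its (closed) singletons if each of them were nowhere dense, so some singleton $\{p\} \subseteq U$ is open in $U$; since $U$ is open in $X$, the point $p$ is then an isolated point of the whole limit space and thus lies in $D$, contradicting $U \cap D = \emptyset$. This completes the proof.

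The part I expect to be the crux is the injectivity step: the point is to recognize that, although the topology of $X$ is typically uncountable, the gluing relation collapses all representatives of a fixed vertex to a single point (\cref{prop.gluing.vertices}(4)), so singular points are at most countable, whereas regular and isolated points are abundant. A more hands-on alternative would avoid Baire entirely: using that $d$-stars form a basis (\cref{rmk.stars.are.basis}), one reduces to showing that every non-trivial cell interior $\| e \|$ meets $D$, for instance by exhibiting a descendant cell $\llbracket ew \rrbracket \subseteq \| e \|$ incident on none of the finitely many boundary vertices of $e$ and applying \cref{lem.cell.contains.isolated.regular.point}. The delicate point of that route is guaranteeing such a boundary-avoiding descendant exists, which examples like the Houghton systems complicate, since there isolated points can sit on the boundary of a cell; the cardinality argument above sidesteps this subtlety cleanly.
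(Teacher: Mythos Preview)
Your Baire-category argument is correct: the injection of singular points into the countable set $\mathbb{V}$ is justified by \cref{prop.gluing.vertices}(1) and (4), and the Baire step is sound since open subsets of compact metrizable spaces are Baire. So the proposal constitutes a valid proof.

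However, the paper's route is both shorter and more elementary, and it is essentially the ``hands-on alternative'' you sketch at the end---but simpler than you fear. The paper argues: the preimage of any nonempty open $U \subseteq X$ under the quotient map is open in $\Omega$, hence contains some cone $T_w$; therefore $U \supseteq \llbracket w \rrbracket$, and \cref{lem.cell.contains.isolated.regular.point} immediately gives a regular or isolated point of $U$. No $d$-stars, no boundary-avoiding descendants, no cell-interior subtleties are needed: the lemma is applied to the whole cell $\llbracket w \rrbracket$, not to $\| w \|$, and that suffices because $\llbracket w \rrbracket \subseteq U$. Your worry that ``isolated points can sit on the boundary of a cell'' is irrelevant here, since such a point would still lie in $U$. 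Your Baire argument trades this elementary observation for a cardinality/category dichotomy; it works, but it imports metrizability and Baire machinery where a two-line direct argument already does the job.
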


\begin{proof}
Thanks to \cref{lem.cell.contains.isolated.regular.point}, we only need to show that every open subset of a limit space includes some cell.
To see this, let $U$ be open in the limit space.
Since the limit space is equipped with the quotient topology, the preimage $\mathfrak{U}$ of $U$ under the quotient map is open in the symbol space $\Omega$.
Being a subspace of the Cantor space $\mathbb{A}^\omega$, a basis of open sets for the topology of $\Omega$ is given by the collection of \textit{cones} of $\Omega$, which are the sets of the form
$\mathfrak{C}_w = \{ w \alpha \in \Omega \}$
for some finite prefix $w \in \mathbb{L}$.
In particular, $\mathfrak{U}$ includes some cone $\mathfrak{C}_w$.
Then, by the very definition of cells (\cref{def.cell}), clearly $U$ must include the cell $\llbracket w \rrbracket$, so we are done.
\end{proof}

Note that regular and isolated points have trivial equivalence classes under the gluing relation (the converse is untrue, as there may be singular points with trivial equivalence class).
Then, in particular, the previous proposition shows that the set of points that are only glued to themselves is dense in the limit space.
This allows us to show the following simple yet essential fact.

\begin{proposition}
\label{prop.homeo.symbol.space.acts.on.limit.space}
A homeomorphism $\Phi$ of the symbol space $\Omega$ that preserves the gluing relation $\sim$ (i.e., $x \sim y \iff \Phi(x) \sim \Phi(y)$) descends to a homeomorphism $\phi$ of the limit space.
Moreover, if $\Phi$ and $\Psi$ descend to $\phi = \psi$ in this way, then $\Phi = \Psi$, meaning that this correspondence describes a faithful action of the group of gluing-preserving homeomorphisms of $\Omega$ on the limit space.
\end{proposition}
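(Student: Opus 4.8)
The plan is to proceed in two steps: first construct the descended map, then establish faithfulness. The first step is a routine application of the universal property of quotient maps, so the genuine content lies in the second.

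First I would write $q \colon \Omega \to \Omega/{\sim}$ for the quotient map onto the limit space (\cref{def.limit.space}). Since $\Phi$ preserves the gluing relation, $x \sim y$ implies $\Phi(x) \sim \Phi(y)$, so $q \circ \Phi$ is constant on $\sim$-classes, and the universal property of the quotient topology yields a unique continuous map $\phi \colon \Omega/{\sim} \to \Omega/{\sim}$ with $\phi \circ q = q \circ \Phi$. Because the hypothesis $x \sim y \iff \Phi(x) \sim \Phi(y)$ is two-sided, the inverse $\Phi^{-1}$ is also gluing-preserving and hence descends to a continuous map $\phi'$. Applying the uniqueness of descent to $\Phi \circ \Phi^{-1}$ and $\Phi^{-1} \circ \Phi$, both of which equal $\id_\Omega$ and descend to $\id$, I would conclude $\phi \circ \phi' = \phi' \circ \phi = \id$, so that $\phi$ is a homeomorphism. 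This settles the first assertion.

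For faithfulness, suppose $\Phi$ and $\Psi$ descend to the same map $\phi = \psi$, so that $q \circ \Phi = q \circ \Psi$; equivalently, $\Phi(x) \sim \Psi(x)$ for every $x \in \Omega$. My goal is to upgrade this to $\Phi(x) = \Psi(x)$. The key observation is that regular and isolated sequences have trivial $\sim$-class by \cref{prop.gluing.vertices}: a regular sequence represents no vertex and so, by parts (3) and (4), is glued only to itself, and the same holds for isolated sequences by part (3). A gluing-preserving homeomorphism carries this property along, since $\Phi(x) \sim \Phi(y) \iff x \sim y$ forces $\Phi(x)$ to have trivial class whenever $x$ does (and likewise for $\Psi$). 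Consequently, for such an $x$, the relation $\Phi(x) \sim \Psi(x)$ together with the triviality of the class of $\Phi(x)$ forces $\Phi(x) = \Psi(x)$.

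It then remains to know that the set $D$ of sequences with trivial $\sim$-class is dense in $\Omega$. I would deduce this from \cref{lem.cell.contains.isolated.regular.point}: the cones $T_w$ ($w \in \mathbb{L}$) form a basis of $\Omega$, and each corresponding cell $\llbracket w \rrbracket$ contains a regular or isolated point, whose address is unique and therefore lies in $T_w$ and belongs to $D$. Thus $D$ is dense. Since $\Phi$ and $\Psi$ are continuous maps into the metrizable, hence Hausdorff, space $\Omega$ and agree on the dense set $D$, they coincide everywhere, giving $\Phi = \Psi$. The main obstacle is precisely this density of $D$ in $\Omega$ itself, rather than the density of regular and isolated points in the limit space established earlier: one must verify that the witnessing regular or isolated point of each cell $\llbracket w \rrbracket$ is represented by a sequence actually lying in the cone $T_w$, which is where the uniqueness of addresses from \cref{prop.gluing.vertices} is used.
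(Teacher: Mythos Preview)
Your proposal is correct and follows essentially the same approach as the paper: the descent via the universal property of the quotient, and faithfulness via the density of sequences with trivial $\sim$-class. The paper's proof is much terser (it merely invokes the universal property and then asserts that density of points glued only to themselves in the \emph{limit space} implies faithfulness), whereas you make explicit the point the paper leaves implicit: that what is actually needed is density of trivial-class sequences in $\Omega$ itself, which you extract from \cref{lem.cell.contains.isolated.regular.point} by pulling back to cones.
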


\begin{proof}
Since the limit space $X$ is the quotient of $\Omega$ under the gluing relation $\sim$, the fact that a gluing preserving homeomorphism $\Phi$ of $\Omega$ descends to a homeomorphism of the limit space is a general fact of quotient topological spaces that follows from the universal property of the quotient map.
The fact that the set of points of a limit space that are only glued to themselves is dense in the limit space implies that this action is faithful.
\end{proof}


\section{Eventually Self-similar Groups of Homeomorphisms of Fractals}
\label{sec.ESS}

The groups treated in this paper have a finitary asynchronous action followed by a self-similar one.
This section builds towards the definition of \textit{diagrams} that represent these homeomorphisms.
Such diagrams encode the asynchronous part of the action as a $\pi$-hypergraph isomorphism and the self-similar part as hyperedge labels which need to agree with $\pi$ in order to preserve the gluing relation.

\subsection{Self-similar Groups of Homeomorphisms}

Our goal in this subsection is to describe self-similar isomorphisms between cells.
We will first recall two classical notions of \textit{automorphism} of edge shifts and \textit{self-similarity}, which does not involve the gluing relation and is only based on the symbol space.
We will then define \textit{color automorphisms}, which are the automorphisms of edge shifts that preserve the ``internal'' gluing relation, and \textit{cell isomorphisms}, which are certain maps defined from a cell to another that restrict to color automorphisms.
This will ultimately allow to build self-similar groups of homeomorphisms of cells.

\subsubsection{Automorphisms of Edge Shifts}

As commonly done in the theory of automorphism groups of trees, one can identify $\Omega(\Gamma)$ with the boundary of the vertex-colored (\cref{def.cone.colors}) forest with vertices $\mathbb{L}$ and edges between words $w$ and $wx$, for all $x \in \mathbb{A}$ and $w, wx \in \mathbb{L}$.
Each forest automorphism then induces a homeomorphism of $\Omega(\Gamma)$.
For short, we will refer to such homeomorphisms simply as \textbf{automorphisms} of $\Omega(\Gamma)$ and we will write $\Aut(\Omega)$ for the group of automorphisms of $\Omega$.

As commonly done in the theory of automorphism groups of trees, one can identify $\Omega(\Gamma_c)$ with the boundary of the vertex-colored (\cref{def.cone.colors}) tree with vertices $\mathbb{L}_c$ and edges between words $w$ and $wx$, for all $x \in \mathbb{A}$ and $w, wx \in \mathbb{L}_c$.
Each tree automorphism then induces a homeomorphism of $\Omega(\Gamma_c)$.
For short, we will refer to such homeomorphisms simply as \textbf{automorphisms} of $\Omega(\Gamma_c)$ and we will write $\Aut(\Omega_c)$ for the group of automorphisms of $\Omega_c$.

Each automorphism $\phi$ of $\Omega(\Gamma_c)$ determines and is determined by the permutation of vertices of the tree, which is a self-bijection of $\mathbb{L}_c$ that preserves word-lengths and the prefix relation (i.e., the relation $\leq$ defined by setting $v \leq w$ when $v$ is a prefix of $w$).
We will denote it by $\boldsymbol{\widehat\phi}$.

\subsubsection{Self-similar Groups of Homeomorphisms of Edge Shifts}

\begin{definition}
\label{def.state}
Given $\phi \in \Aut \left(\Omega_s(\Gamma)\right)$ and an element $w=x_1 x_2 \ldots x_n \in \mathbb{L}_s$ of color $c$, the \textbf{state} of $\phi$ at $w$ is the map
\[ \phi|w \colon \Omega_c(\Gamma) \to \Omega_c(\Gamma),\, C_{\hat{\phi}(w)} \circ \phi \circ \left(C_w\right)^{-1}, \]
which is, in other words, the transformation performed by $\phi$ on the suffix of any $\alpha \in T_w$ after its prefix $w$.
\end{definition}

\begin{definition}
\label{def.self.similar.tuple}
Consider an edge shift $\Omega(\Gamma)$.
A \textbf{self-similar tuple} for $\Omega(\Gamma)$ is a tuple $(G_c \mid c \in V_\Gamma)$ of groups such that
\begin{itemize}
    \item $G_c \leq \Aut(\Omega_c(\Gamma))$ for each $c \in V_\Gamma$ and
    \item $\forall c \in V_\Gamma$ and $\forall g \in G_c$ one has $g|w \in G_{c(w)}$ for every $w \in \mathbb{L}$ of color $c$.
\end{itemize}
\end{definition}

\subsubsection{Color and Cell Automorphisms}

Fix an almost expanding replacement system $\R$ (\cref{def.replacement.system,,def.expanding}).
For each color $c$, let $\R_c$ be the replacement system obtained from $\R$ by replacing the base hypergraph with a sole hyperedge colored by $c$ with pairwise distinct boundary vertices.
Its limit space is homeomorphic to every $c$-colored cell of $\R$, possibly except for additional gluings on the boundary vertices.
Denote by $\sim_c$ the gluing relation of $\R_C$, which is an equivalence relation on $\Omega_c(\Gamma)$.

\begin{definition}
\label{def.color.automorphism}
For each color $c$ of $\R$, a $c$-\textbf{automorphism} is an element $\phi \in \Aut(\Omega_c(\Gamma))$ that preserves the gluing relation of $\R_c$, i.e., such that $\alpha \sim_c \beta$ if and only if $\phi(\alpha) \sim_c \phi(\beta)$.
We denote by $\boldsymbol{\Aut(\R_c)}$ the set of such maps.
\end{definition}

\begin{example}
\label{ex.color.automorphism}
For the airplane replacement system $\mathcal{A}$ (\cref{fig.airplane.replacement.system}), the following map $\phi_\mathcal{A}$ is an automorphism for the color blue:
\[ \phi_\mathcal{A}(I \alpha) = F \alpha,\, \phi_\mathcal{A}(B \alpha) = T \alpha,\, \phi_\mathcal{A}(F \alpha) = I \alpha,\, \phi_\mathcal{A}(T \alpha) = B \alpha. \]

For the Sierpinski triangle or Apollonian gasket replacement systems $\mathcal{ST}$ and $\mathcal{AG}$ (\cref{fig.sierpinski.triangle.replacement.system,,fig.apollonian.gasket.replacement.system}), which share the same replacement hypergraph, the following maps $\phi$ and $\rho$ are automorphisms of the sole color of the replacement systems:
\begin{align*}
\phi(1 \alpha) = 2 \phi(\alpha),\, \phi(2 \alpha) = 1 \phi(\alpha),\, \phi(3 \alpha) = 3 \phi(\alpha),
\\
\rho(1 \alpha) = 2 \rho(\alpha),\, \rho(2 \alpha) = 3 \rho(\alpha),\, \rho(3 \alpha) = 1 \rho(\alpha).
\end{align*}
Note that $\phi$ and $\rho$ are a reflection and a rotation of the whole limit space.
\end{example}

It is easy to see that $\Aut(\R_c)$ is a group and its elements are homeomorphisms of the limit space of $\R_c$.
This is immediately shown as an application of \cref{prop.homeo.symbol.space.acts.on.limit.space} to the replacement system $\R_c$.

Let us denote by $\boldsymbol{\partial w}$ the topological boundary of the cell $\llbracket w \rrbracket$, which is $\llbracket w \rrbracket \setminus \llparenthesis w \rrparenthesis$.
Note that, by \cref{rmk.cell.and.topological.interior}, $\partial w$ coincides with the set of those boundary points of $\llbracket w \rrbracket$ that also belong to some cell that does not include or that is not included in $\llbracket w \rrbracket$.
For instance, consider the cell $\llbracket X 1 \rrbracket$ of the replacement system $\mathcal{ST}$, which is depicted in \cref{fig.cells.airplane.sierpinski.triangle.and.apollonian.gasket}:
its boundary $\partial X 1$ includes the two rightmost vertices of the triangle, but not the leftmost one.
Furthermore, note that $\partial w$ is empty if and only if $w$ is an isolated hyperedge, which in particular is the case for isolated colors.

The points of $\partial w$ are thus singular points, which correspond to some (but not necessarily all) boundary vertices of the hyperedge $w$ in any hypergraph expansion featuring $w$.
We will often treat those points as vertices without further notice.

Color automorphisms have no restrictions on how boundary points are mapped.
For example, it is easy to build color automorphisms of cells of the dendrite replacement systems (which will be considered in \cref{sub.dendrites}, see \cref{fig.dendrite.replacement.systems}) that map boundary vertices to other vertices.

\begin{definition}
\label{def.cell.automorphism}
A \textbf{cell isomorphism} between two cells $\llbracket v \rrbracket$ and $\llbracket w \rrbracket$ of the same color is a map
\[ \llbracket v \rrbracket \to \llbracket w \rrbracket,\, \llbracket v \alpha \rrbracket \mapsto \llbracket w \phi(\alpha) \rrbracket, \]
where $\phi \in \Aut(\R_c)$ restricts to a bijection $\partial v \to \partial w$.
We denote by $\Iso(\llbracket v \rrbracket, \llbracket w \rrbracket)$ the set of isomorphisms from $\llbracket v \rrbracket$ to $\llbracket w \rrbracket$.
\end{definition}

It is easily seen that $\Aut(\llbracket w \rrbracket)$ is a group and it naturally embeds into $\Aut(\R_c)$.

\begin{example}
Let us give two examples, both pertaining to our \cref{ex.replacement.systems}.

In the airplane replacement system $\mathcal{A}$, consider the cells $\llbracket F \rrbracket$ and $\llbracket I I \rrbracket$ (both present in \cref{fig.airplane.expansion}).
The boundary $\partial F$ has a sole point, so any cell automorphism of $\llbracket F \rrbracket$ must fix it, and it is then easy to see that $\llbracket F \rrbracket$ has no non-trivial cell automorphisms.
The boundary $\partial II$ instead has two points, corresponding to the boundary points of $\llbracket II \rrbracket$, so the map $\llbracket II\alpha \rrbracket \mapsto \llbracket II\phi(\alpha) \rrbracket$ (wit $\phi_\mathcal{A}$ as described in \cref{ex.color.automorphism}) is a cell automorphism of $\llbracket II \rrbracket$ and it switches the two points of $\partial II$.

In the Sierpinski triangle replacement system $\mathcal{ST}$, consider the cells $\llbracket X 1 \rrbracket$ and $\llbracket X 2 \rrbracket$:
each of their boundaries consists of two points, one of which is in common, say $\partial \llbracket X 1 \rrbracket = \{ y,z \}$ and $\partial \llbracket X 2 \rrbracket = \{ x,z \}$.
Consider the color automorphisms $\phi$ and $\rho$ described in \cref{ex.color.automorphism}:
the maps $\llbracket X 1 \alpha \rrbracket \mapsto \llbracket X 2 \phi(\alpha) \rrbracket$ and $\llbracket X 1 \alpha \rrbracket \mapsto \llbracket X 2 \rho(\alpha) \rrbracket$ are cell isomorphisms $\llbracket X 1 \rrbracket \to \llbracket X 2 \rrbracket$, since the first maps $y$ to $x$ and fixes $z$ and the second maps $y$ to $z$ and $z$ to $x$.
\end{example}

\begin{remark}
\label{rmk.cell.automorphisms}
Consider a cell automorphism $\llbracket w \alpha \rrbracket \mapsto \llbracket w \phi(\alpha) \rrbracket$ and let $\alpha = x \alpha'$.
The first-level states $\phi|x$ induce cell automorphisms $\llbracket w x \alpha' \rrbracket \mapsto \llbracket w \hat{\phi}(x) \phi|x (\alpha') \rrbracket$
that agree on the topological boundaries $\partial wx$ of the subcells $\llbracket wx \rrbracket$.
\end{remark}

\subsubsection{Self-similar Groups of Homeomorphisms of Cells}

\begin{definition}
\label{def.compatible.self.similar.tuple}
A self-similar tuple $(G_c \mid c \in C)$ is \textbf{compatible} with a replacement system $\R$ if $G_c \leq \Aut(\R_c)$ for each color $c$.
\end{definition}

\subsection{Eventually Self-Similar Groups of Homeomorphisms of the Limit Space}

Throughout this section, fix a self-similar tuple $(G_c \mid c \in C)$ that is compatible with a replacement system $\R$.

\subsubsection{Definition of Diagrams}
\label{sub.diagrams}

For the next definition, recall \cref{def.pi.isomorphism,def.cell.automorphism}.

\begin{definition}
\label{def.diagram}
A \textbf{diagram $\boldsymbol{(D,R,\f,l)}$} consists of a $\pi$-hypergraph isomorphism $\f \colon D \to R$ between two hypergraph expansions $D$ and $R$ together with a labeling of the hyperedges of the domain hypergraph $l \colon E_D \to \bigcup_{c \in \mathrm{C}} G_c, e \mapsto l_e$ such that
\begin{enumerate}
    \item $l_e \in G_c$ if and only if $e$ is colored by $c$;
    \item the map $\llbracket e \alpha \rrbracket \mapsto \llbracket \f_E(e) l_e(\alpha) \rrbracket$ is a cell isomorphism $\llbracket e \rrbracket \to \llbracket \f_E(e) \rrbracket$;
    \item for each $e \in E_D$, the cell isomorphism $\llbracket e \alpha \rrbracket \mapsto \llbracket \f_E(e) l_e(\alpha) \rrbracket$ agrees with the map $\lambda_{\f_E(e)} \circ \pi_e \circ \lambda_e^{-1}$ on $\partial e$.
\end{enumerate}
\end{definition}

As examples, \cref{fig.apollonian.gasket.generators} depicts two diagrams based on the Apollonian gasket replacement system, where the self-similar tuple is the group (because there is a unique color) generated by $\phi$ and $\rho$ from \cref{ex.color.automorphism}.

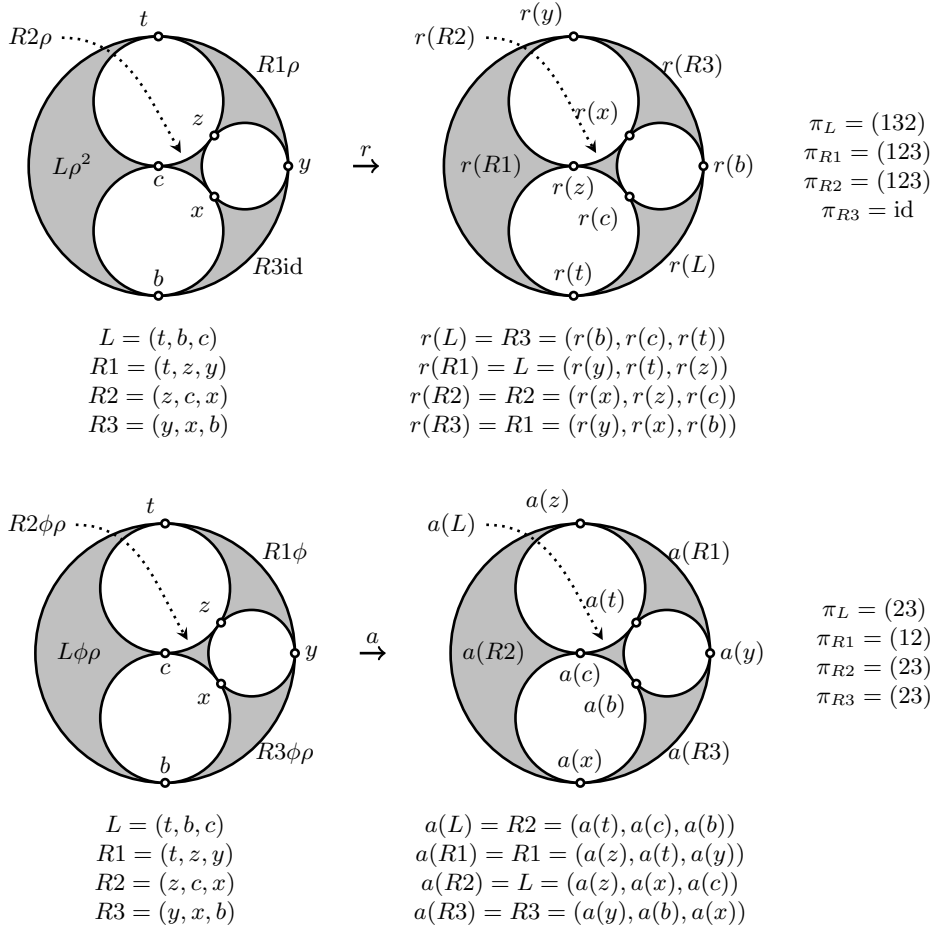
\begin{figure}
\centering
\begin{subfigure}[t]{\textwidth}\centering
\begin{tikzpicture}[scale=1.15,font=\small]
    \begin{scope}[xshift=-2.4cm]
    \coordinate (Ct) at (0,1.5) {};
    \coordinate (Cc) at (0,0) {};
    \coordinate (Cb) at (0,-1.5) {};
    \coordinate (XR) at (1,0) {};
    \coordinate (Cx) at ($(XR)+(225:.5)$) {};
    \coordinate (Cy) at ($(XR)+(0:.5)$) {};
    \coordinate (Cz) at ($(XR)+(135:.5)$) {};
    \draw[fill=black,fill opacity=.25] (Cc) circle (1.5);
    \draw[fill=white] ($(Ct)!.5!(Cc)$) circle (1.5/2);
    \draw[fill=white] ($(Cc)!.5!(Cb)$) circle (1.5/2);
    \draw[fill=white] (XR) circle (1.5/3);
    \node at (40:1.8) {$R1\rho$};
    \node at (320:1.8) {$R3\id$};
    \node (missingcell) at (135:2.1) {$R2\rho$};
    \draw[-stealth,dotted] (missingcell) to[out=0,in=120] (30:.3);
    \node at (180:1) {$L\rho^2$};
    \node[vertex] (Vt) at (Ct) {};
    \node[above left] at (Ct) {$t$};
    \node[vertex] (Vc) at (Cc) {};
    \node[below] at (Cc) {$c$};
    \node[vertex] (Vb) at (Cb) {};
    \node[above] at (Cb) {$b$};
    \node[vertex] (Vx) at (Cx) {};
    \node[below left] at (Cx) {$x$};
    \node[vertex] (Vy) at (Cy) {};
    \node[right] at (Cy) {$y$};
    \node[vertex] (Vz) at (Cz) {};
    \node[above left] at (Cz) {$z$};
    \node[align=center] at (0,-2.5) {
        $L = (t,b,c)$\\
        $R1 = (t,z,y)$\\
        $R2 = (z,c,x)$\\
        $R3 = (y,x,b)$
        };
    \end{scope}
    \draw[-to] (-.15,0) -- node[above]{$r$} (.15,0);
    \begin{scope}[xshift=2.4cm]
    \coordinate (Ct) at (0,1.5) {};
    \coordinate (Cc) at (0,0) {};
    \coordinate (Cb) at (0,-1.5) {};
    \coordinate (XR) at (1,0) {};
    \coordinate (Cx) at ($(XR)+(225:.5)$) {};
    \coordinate (Cy) at ($(XR)+(0:.5)$) {};
    \coordinate (Cz) at ($(XR)+(135:.5)$) {};
    \draw[fill=black,fill opacity=.25] (Cc) circle (1.5);
    \draw[fill=white] ($(Ct)!.5!(Cc)$) circle (1.5/2);
    \draw[fill=white] ($(Cc)!.5!(Cb)$) circle (1.5/2);
    \draw[fill=white] (XR) circle (1.5/3);
    \node at (40:1.8) {$r(R3)$};
    \node at (320:1.8) {$r(L)$};
    \node (missingcell) at (135:2.1) {$r(R2)$};
    \draw[-stealth,dotted] (missingcell) to[out=0,in=120] (30:.3);
    \node at (180:.95) {$r(R1)$};
    \node[vertex] (Vt) at (Ct) {};
    \node[above left] at (Ct) {$r(y)$};
    \node[vertex] (Vc) at (Cc) {};
    \node[below] at (Cc) {$r(z)$};
    \node[vertex] (Vb) at (Cb) {};
    \node[above] at (Cb) {$r(t)$};
    \node[vertex] (Vx) at (Cx) {};
    \node[below left] at (Cx) {$r(c)$};
    \node[vertex] (Vy) at (Cy) {};
    \node[right] at (Cy) {$r(b)$};
    \node[vertex] (Vz) at (Cz) {};
    \node[above left] at (Cz) {$r(x)$};
    \node[align=center] at (0,-2.5) {
        $r(L) = R3 = (r(b),r(c),r(t))$\\
        $r(R1) = L = (r(y),r(t),r(z))$\\
        $r(R2) = R2 = (r(x),r(z),r(c))$\\
        $r(R3) = R1 = (r(y),r(x),r(b))$
        };
    \end{scope}
    \node[align=center] at (5.8,0) {
        $\pi_L = (132)$\\
        $\pi_{R1} = (123)$\\
        $\pi_{R2} = (123)$\\
        $\pi_{R3} = \id$
        };
\end{tikzpicture}
\end{subfigure}
\\
\
\\
\begin{subfigure}[t]{\textwidth}\centering
\begin{tikzpicture}[scale=1.15,font=\small]
    \begin{scope}[xshift=-2.4cm]
    \coordinate (Ct) at (0,1.5) {};
    \coordinate (Cc) at (0,0) {};
    \coordinate (Cb) at (0,-1.5) {};
    \coordinate (XR) at (1,0) {};
    \coordinate (Cx) at ($(XR)+(225:.5)$) {};
    \coordinate (Cy) at ($(XR)+(0:.5)$) {};
    \coordinate (Cz) at ($(XR)+(135:.5)$) {};
    \draw[fill=black,fill opacity=.25] (Cc) circle (1.5);
    \draw[fill=white] ($(Ct)!.5!(Cc)$) circle (1.5/2);
    \draw[fill=white] ($(Cc)!.5!(Cb)$) circle (1.5/2);
    \draw[fill=white] (XR) circle (1.5/3);
    \node at (40:1.8) {$R 1 \phi$};
    \node at (320:1.8) {$R 3 \phi \rho$};
    \node (missingcell) at (135:2.1) {$R 2 \phi \rho$ };
    \draw[-stealth,dotted] (missingcell) to[out=0,in=120] (30:.3);
    \node at (180:1) {$L \phi \rho$};
    \node[vertex] (Vt) at (Ct) {};
    \node[above left] at (Ct) {$t$};
    \node[vertex] (Vc) at (Cc) {};
    \node[below] at (Cc) {$c$};
    \node[vertex] (Vb) at (Cb) {};
    \node[above] at (Cb) {$b$};
    \node[vertex] (Vx) at (Cx) {};
    \node[below left] at (Cx) {$x$};
    \node[vertex] (Vy) at (Cy) {};
    \node[right] at (Cy) {$y$};
    \node[vertex] (Vz) at (Cz) {};
    \node[above left] at (Cz) {$z$};
    \node[align=center] at (0,-2.5) {
        $L = (t,b,c)$\\
        $R 1 = (t,z,y)$\\
        $R 2 = (z,c,x)$\\
        $R 3 = (y,x,b)$
        };
    \end{scope}
    \draw[-to] (-.15,0) -- node[above]{$a$} (.15,0);
    \begin{scope}[xshift=2.4cm]
    \coordinate (Ct) at (0,1.5) {};
    \coordinate (Cc) at (0,0) {};
    \coordinate (Cb) at (0,-1.5) {};
    \coordinate (XR) at (1,0) {};
    \coordinate (Cx) at ($(XR)+(225:.5)$) {};
    \coordinate (Cy) at ($(XR)+(0:.5)$) {};
    \coordinate (Cz) at ($(XR)+(135:.5)$) {};
    \draw[fill=black,fill opacity=.25] (Cc) circle (1.5);
    \draw[fill=white] ($(Ct)!.5!(Cc)$) circle (1.5/2);
    \draw[fill=white] ($(Cc)!.5!(Cb)$) circle (1.5/2);
    \draw[fill=white] (XR) circle (1.5/3);
    \node at (40:1.8) {$a(R 1)$};
    \node at (320:1.8) {$a(R 3)$};
    \node (missingcell) at (135:2.1) {$a(L)$};
    \draw[-stealth,dotted] (missingcell) to[out=0,in=120] (30:.3);
    \node at (180:1) {$a(R2)$};
    \node[vertex] (Vt) at (Ct) {};
    \node[above left] at (Ct) {$a(z)$};
    \node[vertex] (Vc) at (Cc) {};
    \node[below] at (Cc) {$a(c)$};
    \node[vertex] (Vb) at (Cb) {};
    \node[above] at (Cb) {$a(x)$};
    \node[vertex] (Vx) at (Cx) {};
    \node[below left] at (Cx) {$a(b)$};
    \node[vertex] (Vy) at (Cy) {};
    \node[right] at (Cy) {$a(y)$};
    \node[vertex] (Vz) at (Cz) {};
    \node[above left] at (Cz) {$a(t)$};
    \node[align=center] at (0,-2.5) {
        $a(L) = R 2 = (a(t),a(c),a(b))$\\
        $a(R1) = R 1 = (a(z),a(t),a(y))$\\
        $a(R2) = L = (a(z),a(x),a(c))$\\
        $a(R3) = R 3 = (a(y),a(b),a(x))$
        };
    \end{scope}
    \node[align=center] at (5.8,0) {
        $\pi_{L} = (23)$\\
        $\pi_{R1} = (12)$\\
        $\pi_{R2} = (23)$\\
        $\pi_{R3} = (23)$
        };
\end{tikzpicture}
\end{subfigure}
\caption{Two diagrams $r$ and $a$ of $E_\mathcal{AG}G$, where $G = \langle \rho, \phi \rangle$.}
\label{fig.apollonian.gasket.generators}
\end{figure}

Fix a diagram $\F = (D,R,\f,l)$.
Given any element $\alpha$ of the limit space $\Omega$, there is a unique hyperedge $a$ of $D$ such that $\alpha = a \alpha'$.
We then define an action of $\F$ on $\Omega$ as:
\[ \alpha = a\alpha' \xmapsto{\F} \f_E(a) l_a(\alpha'). \]
This map is bijective, because $\f_E$ is a bijection from $E_D$ to $E_R$ and each $l_a$ is a self-bijection of the cone $T_a$.
It is continuous because it maps cones to cones.

\begin{remark}
\label{rmk.diagram.boundary.points}
Condition (2) of \cref{def.diagram} ensures that condition (3) makes sense.
In turn, condition (3) has the following important consequence.
Let $a$ be an hyperedge of the domain hypergraph $D$ of a diagram $(D,R,\f,l)$ and consider a point $\llbracket \alpha \rrbracket \in \partial a$, say $\alpha = a \alpha'$.
Then $\llbracket \alpha \rrbracket$ represents some boundary vertex $v$ of the hyperedge $a$ and thus
\[ \llbracket \F(\alpha) \rrbracket = \llbracket \f_E(a) l_a(\alpha') \rrbracket = \lambda_{\f_E(a)} \circ \pi_a \circ \lambda_a^{-1} (v) = \f_V \circ \lambda_a \left( \lambda_a^{-1} (v) \right) = \f_V(v), \]
where the first equality holds by definition of the action of $\F$, the second by condition (3) of \cref{def.diagram} and the third because $\f$ is a $\pi$-hypergraph isomorphism.
\end{remark}

\begin{lemma}
Let $(G_c \mid c \in \mathrm{C})$ be a self-similar tuple compatible with $\mathcal{R}$.
Then each diagram defines a homeomorphism of the limit space.
\end{lemma}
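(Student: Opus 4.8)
The plan is to show that the bijection $\F$ of $\Omega$ defined just before the statement is in fact a homeomorphism of $\Omega$ that preserves the gluing relation $\sim$, so that \cref{prop.homeo.symbol.space.acts.on.limit.space} produces the desired homeomorphism of the limit space. That $\F$ is a homeomorphism of $\Omega$ is immediate: it is a bijection, and it is continuous because on each of the finitely many clopen cones $T_a$ (with $a$ ranging over the hyperedges of $D$) it acts as $\alpha' \mapsto \f_E(a)\,l_a(\alpha')$, which is the composition of the homeomorphism $l_a \in \Aut(\Omega_{c(a)})$ with the prepending of $\f_E(a)$; a continuous bijection of the compact Hausdorff space $\Omega$ is then a homeomorphism. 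It thus remains to prove that $\alpha \sim \beta$ if and only if $\F(\alpha) \sim \F(\beta)$.

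By \cref{prop.gluing.vertices} the gluing relation is entirely governed by which vertices a sequence represents: isolated sequences are related only to themselves, while two distinct non-isolated sequences are related exactly when they represent a common vertex. First I would dispose of the isolated sequences: since $\f_E$ preserves colors (hence trivial colors), and automorphisms of $\Omega_{c(a)}$ preserve the color, and in particular the triviality, of prefixes, the map $\F$ sends isolated sequences to isolated sequences; being a bijection, it therefore carries the trivial gluing classes of isolated points to trivial classes, which settles every pair involving an isolated sequence.

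For the non-isolated case I would split a pair $(\alpha,\beta)$, where $\alpha = a\alpha'$ and $\beta = b\beta'$, according to whether they share the first-level hyperedge $a = b$ of $D$. When $a = b$, the gluings occurring inside the cone $T_a$ are, apart from the identifications forced by repeated boundary vertices of $a$, exactly the image under the color map of the internal gluing relation $\sim_{c(a)}$ of $\R_{c(a)}$; since $l_a \in G_{c(a)} \leq \Aut(\R_{c(a)})$ is a $c(a)$-automorphism it preserves $\sim_{c(a)}$ in both directions, and $\f_E(a)$ carries the same color $c(a)$, so these gluings are preserved. When $a \neq b$, the disjointness of the topological interiors of distinct cells forces any common vertex $v$ represented by $\alpha$ and $\beta$ to be a boundary vertex shared by $a$ and $b$, and by \cref{rmk.diagram.boundary.points} both $\F(\alpha)$ and $\F(\beta)$ then represent $\f_V(v)$, so they remain glued.

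The step I expect to be the main obstacle is reconciling these two mechanisms at the boundary of a cell, i.e.\ checking that the self-similar label $l_a$ and the $\pi$-hypergraph isomorphism $\f$ act compatibly on $\partial a$. This is precisely the content of condition (3) of \cref{def.diagram}: the cell isomorphism induced by $l_a$ agrees with $\lambda_{\f_E(a)} \circ \pi_a \circ \lambda_a^{-1}$ on $\partial a$, and combined with the $\pi$-isomorphism identity $\f_V(\lambda_a(i)) = \lambda_{\f_E(a)}(\pi_a(i))$ this guarantees that boundary gluings are sent to boundary gluings consistently with the interior behavior governed by $l_a$ — including the extra identifications produced by repeated boundary vertices of $a$, for which $\f_V$ being a genuine map on vertices does the work. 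Since every equivalence used above is an ``if and only if'', both implications follow at once; alternatively, one observes that $\F^{-1}$ is again the action of a diagram, whose $\pi$-hypergraph isomorphism is the inverse $\f^{-1}$ (again a $\pi$-isomorphism) with inverse labels, and applies the forward implication to it.
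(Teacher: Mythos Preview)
Your proposal is correct and follows essentially the same route as the paper: reduce to \cref{prop.homeo.symbol.space.acts.on.limit.space}, then split pairs $(\alpha,\beta)$ according to whether their $D$-prefixes agree, using the $c$-automorphism property of $l_a$ in the first case and \cref{rmk.diagram.boundary.points} together with injectivity of $\f_V$ in the second. Your treatment is in one respect more careful than the paper's, in that you explicitly flag the extra identifications inside $T_a$ arising from repeated boundary vertices of $a$ (which are not seen by $\sim_{c(a)}$) and resolve them via condition~(3) of \cref{def.diagram} and the fact that $\f_V$ is a well-defined map on vertices; the paper's $a=b$ case glosses over this point.
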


\begin{proof}
We need to show that the self-homeomorphisms of the symbol space $\Omega$ defined by a diagram $\F$ preserves the gluing relation, so by \cref{prop.homeo.symbol.space.acts.on.limit.space} it descends to a homeomorphism of the limit space.
Given a diagram $\F = (D,R,\f,l)$, consider elements $\alpha = a \alpha'$ and $\beta = b \beta'$ of $\Omega$, where $a$ and $b$ are hyperedges of $D$.

First suppose that $\alpha \sim \beta$.
If $a=b$, say $a$ is colored by $c$, then $l_a \in \Aut(\mathcal{R}_c)$ by \cref{def.compatible.self.similar.tuple} and condition (1) of \cref{def.diagram}.
Hence $\F(\alpha) = \F(a \alpha') = \f_E(a) l_a(\alpha') \sim \f_E(a) l_a(\beta') = \F(a \beta') = \F(\beta)$ by \cref{def.color.automorphism}.

If instead $a \neq b$, since $\alpha \sim \beta$ we have that $a$ and $b$ are adjacent hyperedges of the domain hypergraph $D$ and the sequences $\alpha$ and $\beta$ represent the same singular point $p$ of the limit space (\cref{def.singular.isolated.regular.points}).
Being a singular point that belongs to both $\llbracket \alpha \rrbracket$ and $\llbracket \beta \rrbracket$, $p$ corresponds to a unique vertex $v$ of $D$ on which both $a$ and $b$ are incident.
Using \cref{rmk.diagram.boundary.points} on both $\alpha$ and $\beta$ we have that $\llbracket \F(\alpha) \rrbracket = \f_V(v)$ and $\llbracket \F(\beta) \rrbracket = \f_V(v)$, so $\F(\alpha) \sim \F(\beta)$ as needed.

To prove the converse, suppose now that $\alpha \not\sim \beta$.
If $a = b$, then $\F(\alpha) = \f_E(a) l_a(\alpha') \not\sim \f_E(a) l_a(\beta') = \F(\beta)$ because $l_a \in G_{c}$ and the self-similar tuple is compatible.
Suppose then that $a \neq b$.
If either $\alpha \notin \partial a$ then $\F(\alpha) \notin \partial \f_E(a)$, so it is impossible that $\F(\alpha) \sim \F(\beta)$, and the same holds if $\beta \notin \partial b$.
Assume then that $\alpha \in \partial a$ and $\beta \in \partial b$ and let $v$ and $w$ be the vertices of $D$ that they represent, respectively.
If by contradiction $\F(\alpha) \sim \F(\beta)$, then we can apply \cref{rmk.diagram.boundary.points} to show that $\f_V(v)=\f_V(w)$.
But this implies that $v=w$, so $\alpha \sim \beta$, a contradiction.
\end{proof}

\subsubsection{Equivalence of Diagrams}
\label{sub.diagram.equivalence}

We define two rewriting rules that modify a diagram without changing its ``meaning''.

A \textbf{permutation} is a rewriting $\boldsymbol{(D,R,\f,l) \rightarrow (D,R,\f',l)}$ where the $\f$, $\pi$-hypergraph isomorphism, is replaced by a $\f'$ obtained by modifying the permutation $\pi$ on vertices that are not points on the topological boundary of any cell.

An \textbf{expansion} is a rewriting $\boldsymbol{(D,R,\f,l) \rightarrow (D \triangleleft e, R \triangleleft \f_E(e), \f \triangleleft e, l \triangleleft e)}$ where the entries of the second diagram are obtained as follows:
\begin{itemize}
    \item $D \triangleleft e$ and $R \triangleleft \f_E(e)$ are hypergraph expansions (\cref{def.hypergraph.expansion}).
    \item $\f \triangleleft e$ is a $\pi \triangleleft e$-hypergraph isomorphism $D \triangleleft e \to R \triangleleft \f_E(e)$ defined as follows:
    \begin{itemize}
        \item $\f_V \triangleleft e$ agrees with $\f_V$ on $V_D$;
        \item $\f_E \triangleleft e$ agrees with $\f_E$ on $E_D \setminus e$ and maps each $ex$ to $\f_E(e)\hat{l_e}(x)$;
        \item $\pi \triangleleft e$ agrees with $\pi$ on $E_D \setminus e$ and maps each $ex$ to any permutation $\sigma$ such that $\lambda_{\f_E} \triangleleft e (ex)) \circ \sigma \circ \lambda_{ex}^{-1}$ agrees with $\llbracket ex\alpha \rrbracket \mapsto \llbracket \f_E \triangleleft e (ex) (l \triangleleft e)_{ex} \rrbracket$ on $\partial ex$.
    \end{itemize}
    \item $l \triangleleft e$ agrees with $l$ on $E_D$ and maps $ex$ to $l_{ex} \coloneq l_e | x$.
\end{itemize}
If $\F$ is the initial diagram, it will be convenient to write $\F \triangleleft e$ to denote such an expansion.
In essence, we expand both the domain and range hypergraphs of $g$ in their hyperedges $e$ and $g_E(e)$, we permute the new hyperedges according to the label of $e$ in $g$ and we update the labelings and the map $\pi$.
For example, \cref{fig.grig.diagrams} depicts a diagram along with two of its expansions (further details in \cref{sub.dendrites}).

\begin{definition}
Two diagrams $(D,R,\f,l)$ and $(D',R',\f',l')$ are \textbf{equivalent} when they differ by a finite sequence of permutations and expansions.
\end{definition}

Using the fact that each $l_e$ induces a cell automorphism of $\llbracket \f_E(e) \rrbracket$ that is compatible with $\pi_e$ on $\partial e$ (see \cref{rmk.diagram.boundary.points}) together with \cref{rmk.cell.automorphisms}, it is not hard to show that permutations and expansions of a diagram are diagrams themselves, i.e., they satisfy \cref{def.diagram}, so this is a well defined equivalence relation on the set of all diagrams.
Moreover, diagram equivalence fully characterizes the action of two diagrams on the limit space, as is showed in the following Lemmas.

\begin{lemma}
\label{lem.equivalence.works}
Equivalent diagrams have the same action.
\end{lemma}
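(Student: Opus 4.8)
The plan is to reduce to the two elementary moves that generate diagram equivalence and to verify that each of them leaves the induced self-map of $\Omega$ unchanged; since agreement on $\Omega$ descends to agreement on the quotient limit space, this suffices. Because equivalence is by definition generated by permutations, expansions and their inverses, and because ``inducing the same map'' is itself an equivalence relation, it is enough to show that a single permutation and a single expansion each produce a diagram with the same action as the one it was applied to.

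For a permutation $(D,R,\f,l) \rightarrow (D,R,\f',l)$, I would simply observe that the action $\alpha = a\alpha' \xmapsto{\F} \f_E(a)\, l_a(\alpha')$ is defined purely in terms of the edge bijection $\f_E$ and the labeling $l$. A permutation alters only the vertex data of $\f$ (the map $\pi$, and correspondingly $\f_V$), while leaving $\f_E$ and $l$ untouched. Hence the two diagrams define literally the same self-map of $\Omega$, and there is nothing further to check.

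For an expansion $(D,R,\f,l) \rightarrow (D \triangleleft e,\, R \triangleleft \f_E(e),\, \f \triangleleft e,\, l \triangleleft e)$, I would compare the two actions hyperedge by hyperedge. On every hyperedge $a \neq e$ of $D$ both $\f_E$ and $l$ are unchanged, so the two actions agree on all sequences with such a prefix. For a sequence with prefix $e$, I write $\alpha = e x \alpha''$ with $x \in E_{c(e)}$, so that $ex$ is the hyperedge of $D \triangleleft e$ containing $\alpha$. By the recipe defining $\f \triangleleft e$ and $l \triangleleft e$, the image of $\alpha$ under the expanded diagram is $\f_E(e)\, \hat{l_e}(x)\, (l_e|x)(\alpha'')$. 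The key identity is then the self-similarity relation $l_e(x\alpha'') = \hat{l_e}(x)\,(l_e|x)(\alpha'')$, which I would derive directly from the definition of state (\cref{def.state}): since $l_e|x = C_{\hat{l_e}(x)} \circ l_e \circ (C_x)^{-1}$ and $(C_x)^{-1}(\alpha'') = x\alpha''$, the sequence $l_e(x\alpha'')$ lies in the cone $T_{\hat{l_e}(x)}$, so applying $C_{\hat{l_e}(x)}^{-1}$ to $(l_e|x)(\alpha'')$ recovers $l_e(x\alpha'')$ with prefix $\hat{l_e}(x)$. Substituting, the image of $\alpha$ under the expanded diagram equals $\f_E(e)\, l_e(x\alpha'') = \f_E(e)\, l_e(\alpha')$, which is exactly its image under the original diagram.

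The permutation case and the reduction are routine; the only place requiring care is the expansion, and specifically the bookkeeping that unwinds the color maps $C_w$ so that the state relation $l_e(x\alpha'') = \hat{l_e}(x)\,(l_e|x)(\alpha'')$ emerges cleanly. Once that relation is in hand, the two actions coincide on all of $\Omega$, and the lemma follows.
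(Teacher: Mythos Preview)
Your proof is correct and follows essentially the same approach as the paper's: both reduce to the elementary moves, dismiss the permutation case by noting it leaves $\f_E$ and $l$ unchanged, and handle the expansion by splitting into the cases $a \neq e$ and $a = e$, with the latter resolved via the state identity $l_e(x\alpha'') = \hat{l_e}(x)\,(l_e|x)(\alpha'')$. You spell out a bit more justification (the reduction to generators and the unwinding of the color maps), but the argument is the same.
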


\begin{proof}
Permutations clearly do not change the action of a diagram.
As for expansions, let $\F$ be a diagram and $\F \triangleleft e$ be an expansion of $\F$.
Every sequence $\alpha$ in the symbol space has some hyperedge $a$ of $D$ as a prefix, so let $\alpha = a \alpha'$.
Then, by definitions, $\F(\alpha) = \f_E(a) l_a(\alpha')$ and we distinguish two cases for $\F \triangleleft e (\alpha)$.
When $a \neq e$ we have
\[ \F \triangleleft e (a \alpha') = (\f_E \triangleleft e) (a) (l \triangleleft e)_a (\alpha') = \f_E(a) l_a (a \alpha'), \]
as needed.
When instead $a = e$, say $\alpha = ax\alpha''$, then
\[ \F \triangleleft a (ax\alpha'') = (\f_E \triangleleft a) (ax) (l \triangleleft a)_{ax} (\alpha'') = \f_E(a) \hat{l_a}(x) l_a|x (\alpha'') = \f_E(a) l_a(x\alpha''), \]
so two equivalent diagrams have the same action.
\end{proof}

Even if there is not a unique ``smallest'' diagram, the next Lemma shows that all such ``smallest'' diagrams only differ by essentially trivial modifications of the diagram.

\begin{lemma}
Given a diagram $\F$, it is equivalent to diagrams that are minimal with respect to the amount of their hyperedges and such diagrams only differ by performing permutations and expansions of trivially colored hyperedge (\cref{def.trivial.replacement.hypergraph})
In particular, if the replacement system is expanding then the minimal diagrams of each equivalence class differ by permutations and thus are finitely many.
\end{lemma}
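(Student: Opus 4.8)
The plan is to prove the two assertions separately, with the bulk of the work going into a confluence argument. Existence of a minimal diagram is immediate: every diagram in the equivalence class of $\F$ has finitely many hyperedges (all hypergraph expansions are finite by \cref{ass.hypergraphs}), so the set of hyperedge-counts occurring in the class is a nonempty set of positive integers and hence attains a minimum, which is realized by some diagram equivalent to $\F$. The real content is the uniqueness clause, and for this I would first establish a diamond lemma: any two expansions of a fixed diagram admit a common expansion. This reduces to the directedness of hypergraph expansions (two expansions of the same domain have a common refinement), together with the fact that expansions at distinct hyperedges commute and that a permutation commutes with any expansion up to a further permutation — since a permutation only alters $\pi$ on non-boundary vertices, it affects neither the hyperedge structure nor the labels produced by an expansion. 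Consequently the relation ``having a common expansion via permutations and expansions'' is reflexive, symmetric and transitive and contains both generating moves, so it coincides with diagram equivalence: \emph{two diagrams are equivalent if and only if they have a common expansion}.

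Next I would set up the reduction process. An expansion at a trivially colored hyperedge (\cref{def.trivial.replacement.hypergraph}) replaces one hyperedge by a single hyperedge of the same color, hence preserves the hyperedge-count, whereas an expansion at a non-trivially colored hyperedge strictly increases it. Let $\approx$ denote the equivalence generated by permutations and trivial expansions (which preserves the count), and call a \emph{contraction} the inverse of a non-trivial expansion. The key combinatorial fact is that the hyperedge sets of two distinct contractible non-trivial clusters $\{wx \mid x \in E_{R_k}\}$ are disjoint: if $w_1x_1 = w_2x_2$ as words, then comparing lengths forces $w_1 = w_2$ and $x_1 = x_2$. Thus any two contractions available in a diagram act on disjoint hyperedges and therefore commute; since contractions strictly decrease the hyperedge-count they terminate, so by Newman's lemma the contraction system is confluent and every diagram has a reduced form, unique up to $\approx$ once one checks that permutations and trivial expansions commute with non-trivial contractions (permutations only move interior $\pi$-data, which is recomputed from the boundary condition of \cref{def.diagram} after contracting using \cref{rmk.diagram.boundary.points} and \cref{rmk.cell.automorphisms}, and trivial expansions act on trivially colored hyperedges whose descendants remain trivially colored and so never meet a non-trivial cluster).

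Finally I would assemble the pieces. A minimal diagram admits no contraction, since a contraction would produce a strictly smaller equivalent diagram; hence the minimal diagrams are exactly the reduced ones. Given two minimal equivalent diagrams $\F_1$ and $\F_2$, choose a common expansion $\G$. Because reduction is $\approx$-invariant and invariant under passing to an expansion (one reduction path for $\G$ undoes the non-trivial expansions performed from $\F_1$, the trivial expansions and permutations being absorbed into $\approx$), confluence gives $\mathrm{red}(\G) \approx \F_1$, and symmetrically $\mathrm{red}(\G) \approx \F_2$, so $\F_1 \approx \F_2$; this is exactly the assertion that minimal diagrams differ only by permutations and trivial expansions. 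For the last clause, recall from \cref{rmk.isolated.sequences} that expanding replacement systems have no trivial colors, so $\approx$ is generated by permutations alone. Two minimal diagrams then share the same $D$, $R$, $\f_E$, $\f_V$ and $l$ and differ only in the choice of $\pi_e \in \Sym(\mathrm{ord}(e))$ over the finitely many hyperedges $e$ of $D$, of which there are finitely many possibilities.

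I expect the main obstacle to be the confluence argument, and specifically the verification that reduction descends to $\approx$-classes: one must check carefully that recomputing the permutation $\pi$ and the parent label when contracting a cluster is compatible with the freedom in permutations and in trivial expansions. The disjointness of contraction clusters is what makes local confluence essentially automatic, and it is the crux that keeps the whole argument manageable.
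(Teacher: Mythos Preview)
Your proof is correct but takes a genuinely different route from the paper's. You set up a syntactic rewriting system: first a diamond lemma for expansions to show that equivalence coincides with ``having a common expansion'', then a termination-and-local-confluence argument (Newman's lemma) for non-trivial contractions, working modulo the count-preserving relation $\approx$ generated by permutations and trivial expansions. Your disjointness observation for contraction clusters is the right engine for local confluence, and your final assembly correctly identifies minimal with reduced. The paper instead argues semantically: using \cref{lem.equivalence.works}, it characterises the possible domain hypergraphs of a diagram with the same action as $\F$ as exactly those expansions on whose cells the action has states in the self-similar tuple, then picks one with maximal cells. The range hypergraph, $\f_V$, $\f_E$ and $l$ are then forced by the action, and only $\pi$ retains freedom away from topological boundaries, i.e.\ up to permutations; trivially colored cells account for the remaining non-uniqueness. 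This is considerably shorter because it bypasses the entire confluence machinery, at the cost of invoking the action on the limit space; your approach is self-contained at the level of diagrams and would survive in settings where no action is available, but you pay for that generality with a longer argument and several compatibility checks (that contraction is well-defined on $\approx$-classes, that the parent label $l_w$ is uniquely recoverable, that expansion and permutation commute up to permutation) which you correctly flag but only sketch.
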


\begin{proof}
The strategy is similar to that of \cite[Proposition 1.19]{BF19} for rearrangements.
Given a diagram $\F$, a hypergraph expansion $\Gamma$ is the domain hypergraph for a diagram with the same action of $\F$ if and only if such action has states in the self-similar-tuple on each cell in $\{ \llbracket e \rrbracket \mid e \in E_\Gamma \}$.
Among these hypergraph expansions, let $D$ be one with maximal cells (with respect to set inclusion).
Each non-trivially colored cell corresponds to a unique hyperedge, whereas a trivially colored cell $\llbracket e \rrbracket$ is the same as $\llbracket e w \rrbracket$ for any expansion of the trivially colored hyperedge $e$.
Thus, two choices of $D$ can only differ up to expansions of trivially colored hyperedges.
Once $D$ has been chosen, it is readily seen that the action of $\F$ uniquely determines the range hypergraph $R$, the bijections $\f_V$ and $\f_E$ and the label $l$.
The only aspect of the diagram that may differ is the map $\pi$.
However, $\pi$ is partially determined by $\f_E$ and $l$ because of condition (3) of \cref{def.diagram}, so different choices can only differ outside of the topological boundaries of cells.
Such distinct diagrams are all the same up to permutations, as needed.
\end{proof}

Note that in the previous Lemma we have actually shown that two diagrams with the same action are equivalent to a common minimal diagram, so they are themselves equivalent.
Together with \cref{lem.equivalence.works}, this means that two diagrams are equivalent if and only if they have the same action.

\subsubsection{Composition of diagrams}

Given two diagrams $\F$ and $\G$, up to expanding them, we can always assume that the domain hypergraph of $\F$ and the range hypergraph of $\G$ coincide.

Let $\F = (B,C,\f,l^{(\f)})$ and $\G = (A,B,\g,l^{(\g)})$ be two diagrams.
The composition $\F \G$ between two diagrams is then
\[ \F \G = (A, C, \f \circ \g, l^*), \text{ where } l_e^* \coloneq l_{\g_E(e)}^{(\f)} \circ l_e^{(\g)} \]

Using \cref{prop.pi.isomorphism.composition}, it is tedious but easy to verify that $(A, C, \f \circ \g, l^*)$ is truly a diagram.
Checking that the composition defined in this way corresponds to the composition of homeomorphisms of the limit space is routine.

\begin{proposition}
The collection of equivalence classes of diagrams defines a group under composition.
\end{proposition}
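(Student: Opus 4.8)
The plan is to transport the group structure from the homeomorphism group of the limit space along the map $\Theta$ that sends a diagram to the homeomorphism it induces. First I would record two facts already in hand: $\Theta$ is well defined on equivalence classes by \cref{lem.equivalence.works}, and it is injective on them, since (as shown just before the composition construction) two diagrams are equivalent precisely when they induce the same homeomorphism. Hence $\Theta$ identifies the set of equivalence classes with a subset of $\mathrm{Homeo}(\Omega/{\sim})$, and it suffices to prove that this subset is a subgroup: that it contains the identity and is closed under composition and inversion. The group axioms for the classes then follow formally, with associativity inherited from associativity of composition of homeomorphisms through the injective, composition-respecting map $\Theta$.

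For the identity, the trivial diagram $(\Gamma_0,\Gamma_0,\f,l)$ whose $\pi$-isomorphism $\f$ is the identity and whose labels $l_e$ are all the identity elements of the relevant groups $G_c$ visibly induces the identity homeomorphism, so its class is the identity for $\Theta$. Closure under composition is essentially the content of the composition construction preceding the statement: given classes $[\F]$ and $[\G]$, I would first use expansions to pick representatives $\F=(B,C,\f,l^{(\f)})$ and $\G=(A,B,\g,l^{(\g)})$ sharing the middle hypergraph $B$ (possible because any two hypergraph expansions of $\Gamma_0$ admit a common expansion), then form $\F\G=(A,C,\f\circ\g,l^{*})$ with $l^{*}_e=l^{(\f)}_{\g_E(e)}\circ l^{(\g)}_e$. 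That $\F\G$ is a genuine diagram uses \cref{prop.pi.isomorphism.composition} to see that $\f\circ\g$ is a $\pi$-isomorphism and then checks the three conditions of \cref{def.diagram} for $l^{*}$. Since the action of $\F\G$ is the composite of the actions of $\F$ and $\G$, injectivity of $\Theta$ forces $[\F\G]$ to equal $\Theta^{-1}(\Theta(\F)\circ\Theta(\G))$, which depends only on $[\F]$ and $[\G]$; this simultaneously proves closure under composition and independence of both the chosen alignment and the chosen representatives, i.e. well-definedness of the operation on classes.

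The step I expect to require the most care is closure under inversion. Given a diagram $\F=(D,R,\f,l)$ I would propose the inverse diagram $\F^{-1}=(R,D,\f^{-1},l')$, where $\f^{-1}$ is the inverse $\pi$-isomorphism — by \cref{prop.pi.isomorphism.composition} this is a $\pi'$-isomorphism with $\pi'_{\f_E(e)}=\pi_e^{-1}$ — and $l'_{\f_E(e)}\coloneq(l_e)^{-1}$, which again lies in $G_{c}$ because each $G_c$ is a group. The main obstacle is verifying that $\F^{-1}$ satisfies \cref{def.diagram}; conditions (1) and (2) are routine, while for condition (3) one must check that the cell isomorphism $\llbracket \f_E(e)\beta\rrbracket\mapsto\llbracket e\, l_e^{-1}(\beta)\rrbracket$ agrees with $\lambda_e\circ\pi_e^{-1}\circ\lambda_{\f_E(e)}^{-1}$ on $\partial\f_E(e)$. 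I would obtain this by inverting the boundary identity of \cref{rmk.diagram.boundary.points} applied to $\F$, which describes exactly how $\F$ acts on $\partial e$ via $\lambda_{\f_E(e)}\circ\pi_e\circ\lambda_e^{-1}$.

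Once $\F^{-1}$ is confirmed to be a diagram, a direct computation of its action on a sequence $\alpha=\f_E(e)\beta$ shows that $\F^{-1}\circ\F$ and $\F\circ\F^{-1}$ both act as the identity, so $[\F^{-1}]=[\F]^{-1}$ and the image of $\Theta$ is closed under inverses. This establishes that the image of $\Theta$ is a subgroup of $\mathrm{Homeo}(\Omega/{\sim})$, and therefore that the equivalence classes of diagrams, being in bijection with it via the injective map $\Theta$, form a group isomorphic to it under composition, as claimed.
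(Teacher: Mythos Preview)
Your proposal is correct and follows essentially the same approach as the paper, which does not give a detailed proof but simply asserts, in the paragraph preceding the proposition, that $(A,C,\f\circ\g,l^*)$ is a diagram (by \cref{prop.pi.isomorphism.composition}) and that the composition of diagrams corresponds to composition of homeomorphisms. Your argument fills in precisely these details and, in addition, supplies the explicit inverse diagram and verifies condition (3) of \cref{def.diagram} for it---a step the paper leaves entirely implicit.
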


\begin{definition}
\label{def.ESS}
Given an almost expanding replacement system $\mathcal{R}$ and a compatible self-similar tuple $\mathbb{G}$, the group of \textbf{eventually-$\boldsymbol{\mathbb{G}}$ homeomorphisms} of the limit space of $\mathcal{R}$ is the group of those homeomorphisms that are represented by diagrams (as in \cref{def.diagram}).
We denote it by $\boldsymbol{E_\mathcal{R} \mathbb{G}}$.
\end{definition}

When not working with a specific tuple $\mathbb{G}$, we may refer to $E\mathbb{G}$ as an \textbf{eventually self-similar group of homeomorphisms} of the limit space, or \textbf{ESS} in short.

\begin{definition}
\label{def.rearrangement}
Given a replacement system, its \textbf{rearrangement group} is the ESS group $E\mathds{1}$, where $\mathds{1}$ is the self-similar tuple consisting of trivial groups, and we denote it by $\boldsymbol{RG_\mathcal{R}}$.
\end{definition}

In the case of expanding replacement systems of graphs, this definition coincides with that from \cite{BF19}.

\begin{remark}
In the case of connected limit spaces, the definition of eventually self-similar group of homeomorphims fits into the more general definition of quasisymmetries (see \cite{QS_BF}).
\end{remark}

\subsection{Sierpinski Triangle and Apollonian Gasket}
\label{sub.triangle.gasket.homeo.groups}

The homeomorphism group of the Sierpinski triangle is a finite group (the dihedral group on the triangle, see \cite[Theorem 2.3.6]{TriangleGasket}), which is readily seen to be a self-similar group generated by the two diagrams whose domain and range hypergraphs are the base hypergraph and its sole hyperedge is labeled, respectively, by $\rho$ and by $\phi$ from \cref{ex.color.automorphism}.

Theorem 3.4.5 of the dissertation \cite{TriangleGasket} shows that elements $r$, $c$ and $a$ depicted in \cite[Figure 3.3.1]{TriangleGasket} generate $\mathrm{Homeo}(AG)$.
Using as self-similar tuple (which is a group because there is a unique color) the group generated by the reflection $\phi$ and the rotation $\rho$ introduced in \cref{ex.color.automorphism}, two generators $r$ and $a$ are represented as diagrams in \cref{fig.apollonian.gasket.generators}.
The generator $c$ can also be represented as a diagram (although drawing it requires expanding the hyperedge $R2$ and the resulting figure has small details that are cumbersome to read), so the entire homeomorphism group of the Apollonian Gasket is an ESS group.


\section{Finiteness Properties of ESS Groups}
\label{sec.finiteness.properties}

Following \cite{Witzel}, here we define categories $\C_{\R,\mathbb{G}}$ whose fundamental groups are the ESS groups.
This is similar to the construction given in \cite[Subsection 5.5]{Witzel} for rearrangement groups, with the difference that the morphisms from an object to itself are extended to certain labeled $\pi$-hypergraph isomorphisms between hypergraph expansions of almost expanding replacement systems.

Throughout this section, if $\C$ is a (small) category, we will denote by $\mathrm{Ob}(\C)$ the set of its objects and, given $A,B \in \mathrm{Ob}(\C)$, we will denote by $\C(A,B)$ the set of morphisms from $B$ to $A$ (this is consistent with the notation from \cite{Witzel}).

The \textbf{fundamental group} $\pi_{1}(\mathcal{C},A)$ of a category is the group of maps from $A$ to itself which are a composition of morphisms in $\mathcal{C}$ and inverses of such morphisms (which may not belong to $\mathcal{C}$).
See \cite{Witzel} for more details.

\subsection{The Category for an ESS Group}

We first describe two useful notions related to replacement systems and ESS groups.

A \textbf{simple contraction} is the inverse of a simple expansion.
A \textbf{trivial} simple contraction is the inverse of a trivial simple expansion (recall \cref{def.trivial.replacement.hypergraph}).

We define \textbf{generalized diagrams} $(D,R,\f,l)$ by allowing $D$ and $R$ to be hypergraph expansions of replacement systems with different base hypergraphs.
For more details see \cite[Definition 3.1]{BF19}, which does this for rearrangement groups.

For the next definition, recall the notion of permutations of diagrams discussed in \cref{sub.diagram.equivalence}.

\begin{definition}
\label{def.our.category}
Let $\R$ be an almost expanding replacement system with set of colors $C = \{1, \dots, k\}$, base hypergraph $\Gamma_0$ and replacement hypergraphs $R_1, \dots, R_k$.
Let $\mathbb{G} = (G_1, \dots, G_k)$ be a self-similar tuple that is compatible with $\R$.
We define the category $\boldsymbol{\C_{\R,\mathbb{G}}}$ with the following data.
\begin{description}
    \item[Objects] $\mathrm{Ob}(\C_{\R,\mathbb{G}})$ is the set of all hypergraphs expansions of $\R$.
    \item[Morphisms] $\C_{\R,\mathbb{G}} = \mathcal{F}_\R \cup \mathcal{G}_{\R,\mathbb{G}}$, where
    \begin{itemize}
        \item $\mathcal{F}_\R$ is generated by the non-trivial simple hypergraph contractions based on $\R$;
        \item $\mathcal{G}_{\R,\mathbb{G}}$ consists of classes of generalized diagrams $(D,R,\f,l)$ differing from permutations, where $D$ and $R$ are the base hypergraphs of the domain and range replacement systems.
    \end{itemize}
\end{description}
\end{definition}

Hypergraph expansions have hyperedges coming from a fixed countable set, the language $\mathbb{L}_\R$, and the set of vertices is also countable, so $\C_{\R,\mathbb{G}}$ is a small category.
Also note that both $\mathrm{Ob}(\C_{\R,\mathbb{G}})$ and $\mathcal{F}_\R$ do not depend on the self-similar tuple $G$.

Throughout this section, $\R$ is going to be a fixed almost expanding replacement system and $\mathbb{G}$ a self-similar tuple that is compatible with $\R$.
Thus, to avoid unnecessary clutter, unless it is useful to contextualize, we will omit them from the notation and simply write $\C$, $\mathcal{F}$ and $\mathcal{G}$ in place of $\C_{\R,\mathbb{G}}$, $\mathcal{F}_\R$ and $\mathcal{G}_{\R,\mathbb{G}}$, respectively.

We recall that, given a category $\mathcal{X}$ and two subcategories $\mathcal{A}$ and $\mathcal{B}$, we say that $\mathcal{X}$ is an (internal) Zappa-Szép product of $\mathcal{A}$ and $\mathcal{B}$, and we write $\mathcal{X} = \mathcal{A} \bowtie \mathcal{B}$, when each element of $\mathcal{X}$ can be written as a product $ab$ for some $a \in \mathcal{A}$ and $b \in \mathcal{B}$.

\begin{proposition}
\label{prop.category.is.product}
$\C$ is an (internal) Zappa-Szép product $\mathcal{F}  \bowtie \mathcal{G}$.
\end{proposition}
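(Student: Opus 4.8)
The plan is to show that every morphism of $\C$ admits a factorization as a composite of one element of $\mathcal{F}$ and one element of $\mathcal{G}$. By \cref{def.our.category}, the category $\C$ is generated by the morphisms in $\mathcal{F} \cup \mathcal{G}$, so every morphism is a finite composite of such generators, and it suffices to establish two things. First, that $\mathcal{F}$ and $\mathcal{G}$ are each closed under composition, so that a block of consecutive $\mathcal{F}$-generators collapses to a single element of $\mathcal{F}$ and likewise for $\mathcal{G}$. Second, a commutation relation that reorders an adjacent pair made of a contraction and a base diagram, allowing us to gather all contractions on one side and all base diagrams on the other. Once both are in place, an induction on word length yields the normal form $\C = \mathcal{F} \bowtie \mathcal{G}$.

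First I would check the two closure statements. That $\mathcal{F}$ is a subcategory is immediate from \cref{def.our.category}, as it is generated by the non-trivial simple contractions and contains the identities. For $\mathcal{G}$, the point is that two composable base diagrams $\G = (A_0, B_0, \g, l^{(\g)})$ and $\F = (B_0, C_0, \f, l^{(\f)})$ already share the base hypergraph $B_0$, which is the range of the first and the domain of the second; hence the composition of diagrams defined in \cref{sec.ESS} applies with no preliminary expansion and produces the generalized diagram $(A_0, C_0, \f \circ \g, l^*)$, again between base hypergraphs, so it lies in $\mathcal{G}$. The identity diagrams furnish the identities, and the whole construction descends to the permutation-equivalence classes defining $\mathcal{G}$ in \cref{sub.diagram.equivalence}.

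The crux is the commutation relation, which comes from the expansion operation on diagrams. Given a base diagram $\G = (A_0, B_0, \g, l) \in \mathcal{G}$ and a non-trivially colored hyperedge $e$ of $A_0$, consider the expansion $\G \triangleleft e = (A_0 \triangleleft e, B_0 \triangleleft \g_E(e), \g \triangleleft e, l \triangleleft e)$ from \cref{sub.diagram.equivalence}, which is equivalent to $\G$ and so has the same action by \cref{lem.equivalence.works}. Writing $c_e \colon A_0 \triangleleft e \to A_0$ and $c_{\g_E(e)} \colon B_0 \triangleleft \g_E(e) \to B_0$ for the simple contractions undoing these two expansions, the sought relation is the identity of morphisms
\[ \G \circ c_e = c_{\g_E(e)} \circ (\G \triangleleft e). \]
Both sides are morphisms $A_0 \triangleleft e \to B_0$; the expansion rule replaces the label $l_e$ by its first-level states $l_e | x$ and permutes the new hyperedges by $\widehat{l_e}$, which is exactly the effect of transporting the contraction from the domain side to the range side, so the two composites carry the same $\pi$-hypergraph isomorphism data and the same labels. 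When $e$ is trivially colored (\cref{def.trivial.replacement.hypergraph}) its contraction is excluded from $\mathcal{F}$, and there is nothing to move since the diagram is already minimal on that cell; these cases are handled separately and cause no trouble.

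Finally I would assemble the normal form by induction on the number of generators in a word representing a morphism of $\C$: whenever a base diagram and a contraction occur in the order opposite to the target normal form, the relation above rewrites the pair, replacing the diagram by its expansion and moving the contraction to the other side, and iterating this migrates all $\mathcal{F}$-factors to one side and all $\mathcal{G}$-factors to the other, whereupon the two closure statements collapse each side to a single element and produce the factorization. I expect the main obstacle to be the bookkeeping in the commutation relation: one must verify that $\G \triangleleft e$ together with the paired contractions agrees not merely as induced homeomorphisms of the limit space but on the nose as morphisms of $\C$, respecting the maps $\pi$, the labels $l$, and the permutation equivalence of \cref{sub.diagram.equivalence}, so that the rewriting is an honest equality in the category rather than only an equality of actions.
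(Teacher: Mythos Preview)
Your proposal is correct and follows essentially the same approach as the paper. The paper's proof is considerably terser---it records only the key commutation relation $gf = f'(g \triangleleft e)$ for $g \in \mathcal{G}$ and $f$ a simple contraction, leaving the closure of $\mathcal{F}$ and $\mathcal{G}$ and the inductive normalization implicit---while you spell these steps out explicitly, but the underlying argument is the same expansion-of-diagrams mechanism from \cref{sub.diagram.equivalence}.
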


\begin{proof}
Given any $g \in \mathcal{G}$ and a generator $f$ of $\mathcal{F}$, an element $gf$ is a simple hyperedge contraction $f$ followed by some $\pi$-isomorphism.
Say that $f$ is the contraction of a hyperedge $e$.
Then the $\pi$-isomorphism $g$ can be expanded to $g \triangleleft e$ as described in \cref{sub.diagram.equivalence}.
Then it is easy to check that $gf = f' (g \triangleleft e)$, where $f'$ is the contraction of the hyperedge $g_E(e)$.
\end{proof}

The ESS group $E_\R \mathbb{G}$ is the fundamental group $\pi_{1}(\C_{\R,\mathbb{G}},\Gamma_0)$: each element consists of a sequence of expansions (anti-contractions) followed by a labelled $\pi$-hypergraph isomorphism and then a sequence of contractions, which is a loop in the category.
The category is connected, so the base point does not matter.

\begin{remark}
Not including hyperedge contractions in \cref{def.our.category} clearly does not change the group.
In fact, one could affirm that the very inclusion of trivial expansions in almost expanding replacement systems could have been avoided entirely, instead not allowing trivial colors to expand at all.
However, doing so would cause the disappearance of isolated points from the symbol and limit spaces.
\end{remark}

\begin{remark}
\label{rmk.invertibles}
Note that the set $\C^\times$ of invertibles of $\C$ is precisely the subcategory $\mathcal{G}$.
We will thus use $\C^\times$ or $\mathcal{G}$ interchangeably depending on the context.

In the case in which $\mathbb{G}$ is trivial (i.e., for rearrangement groups), these are simply hypergraph isomorphisms, so in this case every invertible is in $\C(x,x)$ for some $x \in \mathrm{Ob}(\C)$.
However, when $\mathbb{G}$ allows for a nontrivial permutation of the boundary vertices there will be invertibles in $\C(x,y)$, where $x, y \in \mathrm{Ob}(\C)$ are $\pi$-isomorphic but possibly different hypergraphs.
\end{remark}

We ultimately want to apply Theorem 3.12 from \cite{Witzel}, which we state right below.
We will recall the relevant definitions in the upcoming subsections.

\begin{witzel}
\label{thm.witzel}
Let $\mathcal{X}$ be a factor-finite right-Ore category and let $\star \in \mathrm{Ob}(\mathcal{X})$.
Let $\mathcal{S}$ be a locally finite left-Garside family that is closed under taking factors.
Let $\rho \colon \mathrm{Ob}(\mathcal{X}) \to \mathbb{N}$ be a height function such that $\{x \in \mathrm{Ob}(\mathcal{X}) \mid \rho(x) \leq n\}$ is finite for every $n \in \mathbb{N}$.
Assume
\begin{enumerate}[leftmargin=2cm]
    \item[\textsc{(STAB)}] for all but finitely many $k \in \mathbb{N}$, $\mathcal{X}^\times(x,x)$ has type $F_n$ for all $x \in \rho^{-1}(k)$;
    \item[\textsc{(LK)}] there exists an $N \in \mathbb{N}$ such that $|E(x)|$ is $(n-1)$-connected for all $x$ with $\rho(x) \geq N$.
\end{enumerate}
Then $\pi_1(\mathcal{X},\star)$ has type $F_n$.
\end{witzel}

\subsection{Factor-finiteness and right-Ore}

We say that two contractions $f_1, f_2$ of $x \in \mathrm{Ob}(\C)$ are \textbf{parallel} if the two subhypergraphs of $x$ being contracted by $f_1$ and $f_2$ do not share hyperedges.
In this case, say $f_1 \in \mathcal{F}(y_1,x)$ and $f_2 \in \mathcal{F}(y_2,x)$, there exist $f_1' \in \mathcal{F}(z,y_2)$ and $f_2' \in \mathcal{F}(z,y_1)$ such that $f_1' f_2 = f_2' f_1 \in \mathcal{F}(z,x)$.
Essentially, this is because the two subhypergraphs can be contracted in whichever order one wishes.
The same can be defined for a finite set of contractions of $x$ by requiring that all pairs of subhypergraphs being contracted do not share hyperedges.

Conversely, given $f = f_1' f_2' \in \mathcal{F}(z,x)$, say $f_1' \in \mathcal{F}(z,y_2)$ and $f_2' \in \mathcal{F}(y_2,x)$, we say that $f_1'$ and $f_2'$ are \textbf{parallel} if the subhypergraph contracted by $f_1'$ does not include the hyperedge resulting from the contraction $f_2'$.
In this case, there exists $f_1 \in \mathcal{F}(y_1,x)$ that contracts the subhypergraph of $x$ corresponding to that of $y_2$ contracted by $f_1'$, so $f_1$ and $f_2'$ are parallel in the sense described in the paragraph above.
This definition can be extended to any finite sequence $f = f_1' \dots f_k'$ by requiring that the subhypergraph contracted by each $f_i'$ does not include any of the hyperedges resulting from the previous contractions $f_{i+1}', \dots, f_k'$.

It is clear that the two notions of parallelism that we define are equivalent in the sense that a set of parallel contractions $f_1, \dots, f_k$ of $x$ (the first definition) induces a sequence $f_1' \dots f_k'$ of parallel contractions (the second definition) that is unique once an order has been fixed.
Conversely, a sequence $f_1' \dots f_k'$ of parallel contractions starting from $x$ (second definition) induces a unique set of parallel contractions $f_1, \dots, f_k$ of $x$ (first definition).

Given morphisms $f,g$, we say that $f$ is a \textbf{factor} of $g$ if $g=lfr$ for some morphisms $l$ and $r$.
We say that $f$ and $g$ \textbf{have common right-multiple} if there exist morphisms $a$ and $b$ such that $af = bg$.

\begin{definition}
A category
\begin{itemize}
    \item is \textbf{factor-finite} if every morphism has only finitely many factors up to pre- and postcomposition by invertibles.
    \item is \textbf{cancellative} if, whenever $ef = eg$ or $fe = ge$, one has $f=g$.
    \item \textbf{has common right-multiples} if any two elements with the same target have a common right-multiple.
    \item is \textbf{right-Ore} if it is cancellative and has common right-multiples.
\end{itemize}
\end{definition}

By \cref{rmk.invertibles}, to verify that $\C$ is factor-finite we only need to show it for $\mathcal{F}$.
It is straightforward to see that hyperedge contractions do not have proper factors and that every morphism in $\mathcal{F}$ can be seen as a finite composition of hyperedge contractions (recall \cref{def.hypergraph.expansion}).
Every internal Zappa-Szép product $\mathcal{A} \bowtie \mathcal{B}$ naturally equips $\mathcal{A}$ with an action on $\mathcal{B}$, which is $b^a$ is the unique element such that $ba = a' b^a$ for some $a' \in \mathcal{A}$.
An action is said to be \textbf{injective} if, for all $a \in \mathcal{A}$, we have that $b_{1}^a = b_{2}^a$ implies $b_{1} = b_{2}$.

As already hinted in the proof of \cref{prop.category.is.product}, for our category $\C = \mathcal{F}  \bowtie \mathcal{G}$ the action of $\mathcal{F}$ on $\mathcal{G}$ is defined as follows on the generators:
if $f \in \mathcal{F}$ is a simple contraction $x \leftarrow x \triangleleft e$ and $g \in \mathcal{G}$ is a $\pi$-isomorphism $y \leftarrow x$, then $g^f$ is the $(\pi \triangleleft e)$-isomorphism $y \triangleleft g_E(e) \leftarrow x \triangleleft e$ obtained as described in \cref{sub.diagram.equivalence}.

Proposition 4.5(i) of \cite{Witzel} shows that, if $\mathcal{A}$ is right-Ore and the action of $\mathcal{A}$ on $\mathcal{B}$ is injective, then $\mathcal{A} \bowtie \mathcal{B}$ is right-Ore.
Let us verify these hypotheses.

\begin{proposition}
The category $\mathcal{F}$ is right-Ore.
\end{proposition}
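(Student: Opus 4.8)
The plan is to deduce both defining properties of a right-Ore category directly from the fact that $\mathcal{F}$ is \emph{thin}, i.e. that there is at most one morphism between any two objects. To set this up I would identify each hypergraph expansion $x \in \mathrm{Ob}(\C)$ with the finite set $\mathcal{E}(x)$ of hyperedges at which a \emph{non-trivial} expansion has been performed in building $x$ from $\Gamma_0$. This set is \emph{prefix-closed} (a hyperedge can only be expanded once its ancestors exist), it records $x$ completely, and a finer expansion corresponds to a larger set. Since every generator of $\mathcal{F}$ strictly coarsens, every morphism of $\mathcal{F}$ is a contraction and so decreases $\mathcal{E}$. The key lemma I would prove is that a morphism of $\mathcal{F}$ from $x$ to $y$ exists if and only if $\mathcal{E}(y) \subseteq \mathcal{E}(x)$, in which case it is unique and contracts exactly the sites in $\mathcal{E}(x) \setminus \mathcal{E}(y)$.

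For the existence half I would contract those sites from the bottom up, a site becoming contractible once all the hyperedges it produced are themselves leaves. For uniqueness I would argue that any composition of non-trivial simple contractions carrying $x$ to $y$ must contract precisely the sites of $\mathcal{E}(x)\setminus\mathcal{E}(y)$, and that any two such reduction sequences are interconvertible by transposing adjacent contractions at distinct sites. This transposition is exactly the commutation of parallel contractions established just before the proposition (the identity $f_1' f_2 = f_2' f_1$), so a routine confluence argument shows that all reduction sequences from $x$ to $y$ define the same morphism. This uniqueness statement is where essentially all the work lies; everything else is formal once it is in hand.

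With thinness established, \textbf{cancellativity} is immediate: if $ef = eg$ or $fe = ge$, then by the typing of the composites $f$ and $g$ already share source and target, hence coincide. For \textbf{common right-multiples}, given $f_1 \in \mathcal{F}(y_1,x)$ and $f_2 \in \mathcal{F}(y_2,x)$ with common target $x$, I would take the object $z$ with $\mathcal{E}(z) = \mathcal{E}(y_1) \cap \mathcal{E}(y_2)$; an intersection of prefix-closed sets is prefix-closed, so $z$ is a legitimate hypergraph expansion and is coarser than both $y_1$ and $y_2$. The lemma then furnishes contractions $a \in \mathcal{F}(z,y_1)$ and $b \in \mathcal{F}(z,y_2)$, and since $a f_1$ and $b f_2$ both lie in $\mathcal{F}(z,x)$, thinness forces $a f_1 = b f_2$. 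Thus any two morphisms sharing a target have a common right-multiple, and $\mathcal{F}$ is cancellative with common right-multiples, hence right-Ore.

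The main obstacle I anticipate is the uniqueness clause of the lemma, and the subtlety to keep track of is the role of trivial colors: because $\mathcal{F}$ is generated only by non-trivial simple contractions, $\mathcal{E}$ must record non-trivial expansion sites exclusively, so that the sites removed in passing from each $y_i$ to $z$ are genuinely non-trivial and the contractions $a$ and $b$ really belong to $\mathcal{F}$ rather than to $\mathcal{G}$.
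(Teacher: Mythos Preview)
Your approach is correct and is essentially a rigorous unpacking of what the paper asserts in two sentences. The paper's proof simply states that one can ``delete the first or last from a sequence of contractions'' (for cancellativity) and that ``it is always possible to find a common hypergraph expansion'' (for common right-multiples), without further justification. Both of these tacitly rely on exactly the thinness of $\mathcal{F}$ that you make explicit: the deletion argument only makes sense if a morphism is determined by its source and target, and the common-expansion argument needs the two composites $af_1$ and $bf_2$ to automatically agree once their endpoints do. Your encoding of objects by prefix-closed sets $\mathcal{E}(x)$ and the confluence argument for uniqueness supply precisely the missing justification.

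One point to tighten: as you note, trivial colors require care. If you record only non-trivial expansion sites in $\mathcal{E}(x)$, then $\mathcal{E}$ is not injective on objects (e.g.\ $\Gamma_0$ and $\Gamma_0 \triangleleft e$ for a trivially colored $e$ have the same $\mathcal{E}$). The cleanest fix is to observe that since $\mathcal{F}$ contains no trivial contractions, any two objects connected by a morphism of $\mathcal{F}$ carry exactly the same trivial-expansion data; hence within each connected component of $\mathcal{F}$ your map $\mathcal{E}$ \emph{is} injective, and the argument goes through unchanged. This is the subtlety you flagged, and it does resolve cleanly.
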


\begin{proof}
$\mathcal{F}$ is cancellative because one can always ``delete'' the first or last from a sequence of contractions.
It has common right-multiples because, given two hypergraph expansions, it is always possible to find a common hypergraph expansion.
\end{proof}

\begin{proposition}
The action of $\mathcal{F}$ on $\mathcal{G}$ is injective.
\end{proposition}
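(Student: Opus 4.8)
The plan is to use the explicit description of the action given just above the statement: for a simple contraction $f$ contracting a hyperedge $e$ and a diagram $g$ whose domain equals the target of $f$, the morphism $g^f$ is precisely the expanded diagram $g \triangleleft e$ from \cref{sub.diagram.equivalence}. For a general $f \in \mathcal{F}$ — a finite composition of simple contractions — the Zappa-Szép action formula produces $g^f$ by applying the corresponding finite sequence of diagram expansions to $g$. In either case $g^f$ is an expansion of $g$, and this is the only feature of the action I will use.

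First I fix $f \in \mathcal{F}$ with target hypergraph $x$ and take diagrams $g_1, g_2$ for which $g_1^f$ and $g_2^f$ are defined; composability of $g_i f$ forces both $g_1$ and $g_2$ to have domain hypergraph exactly $x$ (the target of $f$ is the source of each $g_i$). Assume $g_1^f = g_2^f$ as elements of $\mathcal{G}$, that is, they agree up to permutation.

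The crux is that expansion does not alter the action of a diagram: by \cref{lem.equivalence.works} equivalent diagrams have the same action, and both permutations and expansions produce equivalent diagrams. Hence $g_i$ and $g_i^f$ induce the same homeomorphism of the limit space, and from $g_1^f = g_2^f$ I deduce that $g_1$ and $g_2$ have the same action. I then invoke the uniqueness established in the proof of the lemma on minimal diagrams: once the domain hypergraph is fixed, the action of a diagram determines its range hypergraph together with $\f_V$, $\f_E$ and the label $l$, and pins down $\pi$ up to modifications away from the topological boundaries of cells, i.e. up to permutations. Since $g_1$ and $g_2$ share the domain hypergraph $x$ and have the same action, they must coincide up to permutation, so $g_1 = g_2$ in $\mathcal{G}$, which is the desired injectivity.

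The step I expect to require the most care is checking the two hypotheses feeding the final sentence: that $g_i^f$ is genuinely an expansion of $g_i$ (so that \cref{lem.equivalence.works} applies and the actions really match) and that the uniqueness-up-to-permutation statement is used with the domain hypergraph truly fixed to $x$ for both diagrams. Should an explicit computation be preferred, I would instead invert the expansion formula of \cref{sub.diagram.equivalence} directly: on hyperedges other than $e$ the data of $g \triangleleft e$ coincides with that of $g$; at $e$ one recovers $\f_E(e)$ by deleting the last symbol from the image $\f_E(e)\hat{l_e}(x)$ of any hyperedge $ex$, reads off $\hat{l_e}$ from those last symbols and each state $l_e|x$ as the label $l_{ex}$ of $ex$, reconstructs $l_e$ from its first-level action and its states, and determines $\pi_e$ up to permutation via condition (3) of \cref{def.diagram}. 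Either route exhibits a well-defined inverse of $g \mapsto g^f$ on its image, which is exactly the asserted injectivity.
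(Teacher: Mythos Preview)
Your argument is correct. The paper's own proof is a single sentence: if expanding a hyperedge $e$ in two $\pi$-isomorphisms produces the same result, then the two were equal to begin with. This is precisely your alternative explicit-inversion route, stated without the details.

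Your main route, by contrast, passes through the action on the limit space: you observe that $g_i$ and $g_i^f$ are equivalent diagrams (hence induce the same homeomorphism), and then invoke the uniqueness-up-to-permutation clause buried in the proof of the minimal-diagram lemma to conclude $g_1 = g_2$. This is sound but more roundabout; it imports machinery (the limit-space action, its faithfulness, the structure of diagrams with a fixed domain) that the paper bypasses entirely by reading the data of $g$ directly off $g \triangleleft e$. The conceptual payoff of your approach is that it explains \emph{why} injectivity holds --- the action on the limit space is an invariant of expansion that, together with the domain, determines the diagram --- whereas the paper's version is shorter and self-contained.

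One small point of care: the lemmas you cite (equivalent diagrams have the same action; a fixed domain plus the action determines the diagram up to permutation) are stated for diagrams on the original replacement system, whereas the elements of $\mathcal{G}$ are generalized diagrams between arbitrary hypergraph expansions. The extension is routine since all these expansions share the same limit space and the proofs carry over verbatim, but it is worth flagging if you write this up.
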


\begin{proof}
If expanding an hyperedge $e$ of two $\pi$-isomorphisms $x \to y$ produces the same $\pi$-isomorphism, then the two $\pi$-isomorphisms were equal beforehand.
\end{proof}

Together with \cref{prop.category.is.product}, these last two propositions and the aforementioned Proposition 4.5(i) of \cite{Witzel} ultimately show that $\C$ is right-Ore.
It is factor-finite because $\mathcal{F}$ is and the morphisms of $\mathcal{G}$ are invertible.
In short, we gather these results in the following proposition.

\begin{corollary}
\label{cor.our.category.is.ore}
The category $\C$ is factor-finite right-Ore.
\end{corollary}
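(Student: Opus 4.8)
The plan is to build $\C$ from simpler pieces and invoke the structural result of \cite{Witzel} rather than verifying the Ore conditions for $\C$ directly. By \cref{prop.category.is.product}, $\C$ decomposes as an internal Zappa--Szép product $\mathcal{F} \bowtie \mathcal{G}$, so Proposition 4.5(i) of \cite{Witzel} reduces the right-Ore property of $\C$ to two facts: that $\mathcal{F}$ is itself right-Ore, and that the induced action of $\mathcal{F}$ on $\mathcal{G}$ is injective. Both are available from the propositions proved just above: $\mathcal{F}$ is cancellative (one may delete the first or last contraction from a composition) and has common right-multiples (any two hypergraph expansions of $\R$ admit a common refinement), while the action is injective because expanding a single hyperedge of two $\pi$-isomorphisms with the same domain and range produces distinct $\pi$-isomorphisms whenever the originals differ. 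Chaining these through Proposition 4.5(i) of \cite{Witzel} yields that $\C$ is right-Ore.

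For factor-finiteness, I would first appeal to \cref{rmk.invertibles}, which identifies the invertibles $\C^\times$ with the subcategory $\mathcal{G}$. Since factor-finiteness counts factors only up to pre- and post-composition by invertibles, it suffices to establish the property for the non-invertible part $\mathcal{F}$. This is immediate once one observes that a single hyperedge contraction has no proper factors and that every morphism of $\mathcal{F}$ is a finite composition of such contractions (recall \cref{def.hypergraph.expansion}); hence any morphism admits only finitely many factors up to the allowed equivalence.

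The substantive content of the argument lives entirely in the preceding propositions, so the corollary itself is a bookkeeping assembly and I do not expect a genuine obstacle. The one point that warrants a moment of care is the reduction of factor-finiteness of $\C$ to that of $\mathcal{F}$: one must use that the $\mathcal{G}$-morphisms, being invertible, contribute no new factors modulo pre- and post-composition by invertibles, so that the count collapses onto $\mathcal{F}$. Thus the proof is simply to cite, in order, \cref{prop.category.is.product}, the right-Ore property of $\mathcal{F}$, the injectivity of the action, Proposition 4.5(i) of \cite{Witzel}, and finally \cref{rmk.invertibles} together with the observation that contractions have no proper factors.
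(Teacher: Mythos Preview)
Your proposal is correct and follows essentially the same route as the paper: you invoke \cref{prop.category.is.product} together with Proposition~4.5(i) of \cite{Witzel}, reduce right-Ore to the two preceding propositions ($\mathcal{F}$ right-Ore and the action injective), and handle factor-finiteness via \cref{rmk.invertibles} plus the observation that contractions have no proper factors. The paper's argument is identical in structure and content.
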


\subsection{Left-Garside Family for \texorpdfstring{$\C_{\R,\mathbb{G}}$}{CRG}}

To any hypergraph $x$ in $\mathrm{Ob}(\mathcal{F})$ we associate its \textbf{full expansion} $x^*$, i.e., the hypergraph obtained by expanding every hyperedge.
Let $\Delta \colon \mathrm{Ob}(\mathcal{F}) \to \mathcal{F}$ be the morphism $x \leftarrow x^*$ that consists of parallel contractions.
Let
\[ \mathrm{Div}(\Delta) = \{ g \in \mathcal{F} \mid gh = \Delta(x) \text{ for some } x \in \mathrm{Ob}(\mathcal{F}), h \in \mathcal{F} \}, \]
which consists of every composition of parallel contractions.
This set is clearly locally finite, since each hypergraph can only admit finitely many contractions.
Let
\[ \widetilde{\mathrm{Div}}(\Delta) = \{ h \in \mathcal{F} \mid gh = \Delta(x) \text{ for some } x \in \mathrm{Ob}(\mathcal{F}), g \in \mathcal{F} \}, \]
which consists of every composition of parallel contractions of $x^*$, for any $x \in \mathrm{Ob}(\mathcal{F})$.
It is clear that $\mathrm{Div}(\Delta)$ generates $\mathcal{F}$ and that $\widetilde{\mathrm{Div}}(\Delta) \subseteq \mathrm{Div}(\Delta)$.

Every $g \in \mathcal{F}(x,-)$ has a maximal left-factor of parallel contractions, which is the greatest common left-factor of $g$ and $\Delta(x)$.
Then the map $\Delta$ is a right-Garside map.
By \cite[Observation 1.7]{Witzel}, $\mathcal{S} := \mathrm{Div}(\Delta)$ is a left-Garside family closed under taking factors.
Thanks to \cite[Proposition 4.5 (iv)]{Witzel}, this translates to Zappa-Szép products, so by \cref{prop.category.is.product} we have the following.

\begin{proposition}
\label{prop.our.garside.family}
$\mathcal{S} = \mathrm{Div}(\Delta)$ is a locally finite left-Garside family in $\C$ that is closed under taking factors.
\end{proposition}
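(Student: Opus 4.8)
The plan is to prove the statement first inside the subcategory $\mathcal{F}$ of hyperedge contractions and then transport it to the whole category $\C$ using the Zappa-Szép decomposition $\C = \mathcal{F} \bowtie \mathcal{G}$ from \cref{prop.category.is.product}. The running text has already set up the two external inputs we need, namely Observation 1.7 and Proposition 4.5(iv) of \cite{Witzel}, so the work is to check that their hypotheses hold in our combinatorial setting.

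First I would verify that $\Delta$ is a right-Garside map on $\mathcal{F}$. The key combinatorial fact is that any morphism $g \in \mathcal{F}(x,-)$, being a finite composition of hyperedge contractions, can be reordered so that a maximal batch of pairwise parallel contractions of $x$ is performed first. This maximal batch is exactly the greatest common left-factor of $g$ and $\Delta(x)$, since $\Delta(x)$ simultaneously contracts all of $x^*$ down to $x$ and thus dominates every parallel batch. Existence of such a maximal parallel left-factor follows from the equivalence between the two notions of parallelism discussed above, and its uniqueness follows from cancellativity of $\mathcal{F}$, established in \cref{cor.our.category.is.ore}. With this greatest-common-left-factor property in hand, $\Delta$ satisfies the axioms of a right-Garside map, so Observation 1.7 of \cite{Witzel} yields that $\mathrm{Div}(\Delta)$ is a left-Garside family in $\mathcal{F}$ that is closed under taking factors.

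Next I would record local finiteness, which is immediate: since every hypergraph expansion has only finitely many hyperedges (\cref{ass.hypergraphs}), each object $x$ admits only finitely many parallel contractions, so only finitely many elements of $\mathcal{S}$ have target $x$, and $\mathcal{S} = \mathrm{Div}(\Delta)$ is locally finite. Finally, to pass from $\mathcal{F}$ to $\C = \mathcal{F} \bowtie \mathcal{G}$, I would invoke Proposition 4.5(iv) of \cite{Witzel}, which transfers a left-Garside family from the $\mathcal{F}$-factor of an internal Zappa-Szép product to the whole category; the hypotheses required are exactly the right-Ore and injectivity properties already verified in the course of proving \cref{cor.our.category.is.ore}. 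This gives that $\mathcal{S}$ is a left-Garside family in all of $\C$ closed under factors, with local finiteness inherited, completing the proof.

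The main obstacle is the first step. The later appeals to \cite{Witzel} are essentially formal once their inputs are in place, but verifying that $\Delta$ is genuinely a right-Garside map requires care: one must confirm that reordering a composition of hyperedge contractions really produces a well-defined maximal parallel left-factor and that this factor coincides with the greatest common left-factor with $\Delta(x)$, rather than merely a locally maximal one. The subtlety lies in the interaction between the two descriptions of parallel contractions and in ensuring that the combinatorics of hypergraph expansions furnishes a unique such factor; once this is pinned down, everything else follows from the general machinery.
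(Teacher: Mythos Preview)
Your proposal is correct and follows essentially the same route as the paper: verify that $\Delta$ is a right-Garside map on $\mathcal{F}$ via the maximal parallel left-factor, invoke \cite[Observation 1.7]{Witzel} to get the left-Garside family in $\mathcal{F}$, note local finiteness from finiteness of hypergraphs, and then pass to $\C$ through \cite[Proposition 4.5(iv)]{Witzel} and \cref{prop.category.is.product}. The paper is in fact terser than you are about the first step, simply asserting the greatest-common-left-factor property, so your added caution there is warranted but not a departure in strategy.
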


\subsection{Stabilizers in \texorpdfstring{$\C_{\R,\mathbb{G}}$}{CRG}}

In this subsection we translate condition \textsc{(STAB)} of \hyperref[thm.witzel]{Witzel's Theorem} into a sufficient condition on the groups of the self-similar tuple.

\begin{proposition}
\label{prop.finite.index.stabilizers}
For all $x \in \mathrm{Ob}(\C)$, the group $\C^\times(x,x)$ contains a finite-index copy of $\bigtimes \{ G_{c(e)} \mid e \in E_x \}$.
\end{proposition}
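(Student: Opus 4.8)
The plan is to present $\C^\times(x,x)$ as a group of labelled $\pi$-hypergraph automorphisms of the finite hypergraph $x$, peel off a finite ``combinatorial'' quotient, and recognise the remaining kernel as a direct product of boundary-stabilising subgroups of the $G_{c(e)}$. By \cref{rmk.invertibles} every element of $\C^\times(x,x) = \mathcal{G}(x,x)$ is represented, up to permutation equivalence, by a diagram $(x,x,\f,l)$ in which $\f\colon x\to x$ is a $\pi$-hypergraph automorphism and $l$ labels each $e\in E_x$ by some $l_e\in G_{c(e)}$ subject to \cref{def.diagram}. First I would introduce the forgetful map $\Theta\colon \C^\times(x,x)\to\Aut_\pi(x)$ sending $(\f,l)$ to the permutation-equivalence class of $\f$, where $\Aut_\pi(x)$ denotes the $\pi$-hypergraph automorphisms of $x$ modulo permutations. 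By \cref{prop.pi.isomorphism.composition} composition of diagrams composes the underlying $\pi$-isomorphisms, so $\Theta$ is a group homomorphism; and since $x$ is finite there are only finitely many bijections $\f_E$, $\f_V$ and finitely many boundary actions of $\pi$, so $\Aut_\pi(x)$ is finite. Hence $\ker\Theta$ has finite index in $\C^\times(x,x)$.

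Next I would identify the kernel. A diagram lies in $\ker\Theta$ exactly when its underlying $\f$ is trivial up to permutation, i.e. $\f_E=\id_{E_x}$ and each $\pi_e$ fixes the boundary vertices of $e$. By condition (3) of \cref{def.diagram} together with \cref{rmk.diagram.boundary.points}, this forces each label $l_e$ to fix $\partial e$ pointwise; conversely, once $\f$ is trivial the labels of distinct hyperedges satisfy no mutual compatibility, so any such family is admissible. Writing $G_c^{\partial}\le G_c$ for the subgroup of elements fixing the boundary, we obtain the identification
\[ \ker\Theta\;\cong\;\bigtimes\{\,G_{c(e)}^{\partial}\mid e\in E_x\,\}, \]
so that this direct product sits inside $\C^\times(x,x)$ as a finite-index subgroup.

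Finally I would compare $\bigtimes_e G_{c(e)}^{\partial}$ with $\bigtimes_e G_{c(e)}$. Each cell has only finitely many boundary points, and the labels occurring in $\C^\times(x,x)$ must carry $\partial e$ bijectively onto $\partial\f_E(e)$ --- this is exactly what the notion of cell isomorphism in \cref{def.cell.automorphism} demands. Restriction to this finite boundary gives a homomorphism $G_c\to\Sym(\partial e)$ into the finite symmetric group, with kernel $G_c^{\partial}$; hence $G_c^{\partial}$ has finite index in $G_c$, and $\bigtimes_e G_{c(e)}^{\partial}$ has finite index in $\bigtimes_e G_{c(e)}$. Combining the two finite-index relations exhibits $\bigtimes_e G_{c(e)}^{\partial}$ as a common finite-index subgroup of $\C^\times(x,x)$ and of $\bigtimes_e G_{c(e)}$, giving the asserted finite-index copy (and in particular showing that the two groups are commensurable, so that they enjoy the same finiteness properties $F_m$, which is all that condition \textsc{(STAB)} of \hyperref[thm.witzel]{Witzel's Theorem} will require).

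I expect this last step to be the main obstacle: one must guarantee that the elements of $G_c$ genuinely preserve the finite boundary set, so that restriction to the boundary is a well-defined finite quotient and $G_c^{\partial}$ is finite index. This is precisely where the cell-isomorphism condition (boundary bijections) is essential, in contrast with general color automorphisms, which may send boundary vertices to interior points; I would therefore make explicit that only boundary-preserving labels appear among morphisms of $\C^\times(x,x)$ and that the boundary permutations they induce range over a finite group.
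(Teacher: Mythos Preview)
Your approach parallels the paper's: both view $\C^\times(x,x)$ as labelled $\pi$-automorphisms of $x$ and exploit the finiteness of the group $\Pi\Aut(x)$ of $\pi$-automorphisms. The paper embeds $\C^\times(x,x)$ into $\bigtimes G_{c(e)} \rtimes \Pi\Aut(x)$ and asserts that the image contains the full normal factor $\bigtimes G_{c(e)}$; you instead project onto $\Aut_\pi(x)$ and identify the kernel as $\bigtimes G_{c(e)}^\partial$, then argue commensurability with $\bigtimes G_{c(e)}$. Your kernel computation is in fact more careful than the paper's ``clearly'': with trivial underlying $\pi$-automorphism, condition~(3) of \cref{def.diagram} forces each label to fix $\partial e$ pointwise, so one only obtains $\bigtimes G_{c(e)}^\partial$ inside $\C^\times(x,x)$, exactly as you found.

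The gap you flag in your final paragraph is real and is not closed by your proposed fix. That only boundary-preserving labels appear in diagrams constrains the image of $\C^\times(x,x)$ inside the wreath product, not the index of $G_c^\partial$ in $G_c$. Nothing in \cref{def.compatible.self.similar.tuple} forces an arbitrary $g\in G_c$ to permute the boundary-point set of $\R_c$; the paper itself remarks just before \cref{def.cell.automorphism} that colour automorphisms need not. Hence restriction to the boundary is not in general a homomorphism $G_c\to\Sym(\partial)$, and $G_c^\partial$ may have infinite index. What both arguments actually establish is that $\C^\times(x,x)$ contains $\bigtimes G_{c(e)}^\partial$ with finite index; this suffices for \cref{cor.finiteness.stabilizers} in every example the paper treats, since there each $G_c$ visibly permutes the boundary set, but to recover the proposition as literally stated you would need to add that hypothesis on the self-similar tuple.
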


\begin{proof}
As noted in \cref{rmk.invertibles}, $\C^\times$ is none other than $\mathcal{G}$.
Then $\C^\times(x,x)$ consists of all $\pi$-automorphisms of the hypergraph $x$ together with a labeling of the hyperedges by elements of the groups of the self-similar tuple $\mathbb{G} = (G_1, \dots, G_k)$ that agrees with $\pi$.
Thus, there are as many copies of $G_i$ as there are hyperedges colored by $i$ in $x$ and the $\pi$-automorphisms of $x$ act by permuting the copies of the groups $G_1, \dots, G_k$ according to the permutation of hyperedges.
This means that $\C^\times(x,x)$ is a subgroup of the generalized wreath product
\[ \C^\times(x,x) \simeq H \leq \bigtimes \{ G_{c(e)} \mid e \in E_x \} \rtimes \Pi\Aut(x), \]
where $\Pi\Aut(x)$ is the group of all $\pi$-automorphisms of $x$.
The subgroup $H$ consists of the elements of $\bigtimes \{ G_{c(e)} \mid e \in E_x \} \rtimes \Pi\Aut(x)$ whose labeling agrees with $\pi$.

Now, $H$ clearly includes $\bigtimes \{ G_{c(e)} \mid e \in E_x \}$, so it is an intermediate group between $\bigtimes \{ G_{c(e)} \mid e \in E_x \}$ and $\bigtimes \{ G_{c(e)} \mid e \in E_x \} \rtimes \Pi\Aut(x)$.
Since the former has finite index in the latter, $\bigtimes \{ G_{c(e)} \mid e \in E_x \}$ must have finite index in $H$.
Then $\bigtimes \{ G_{c(e)} \mid e \in E_x \}$ has finite index in $H \simeq \C^\times(x,x)$.
\end{proof}

\begin{corollary}
\label{cor.finiteness.stabilizers}
If the groups of the self-similar tuple all have type $F_n$, then $\C^\times(x,x)$ has type $F_n$ for all $x \in \mathrm{Ob}(\C)$.
\end{corollary}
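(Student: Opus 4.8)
The plan is to derive this directly from \cref{prop.finite.index.stabilizers} together with two standard closure properties of the finiteness property $F_n$: that it is preserved under finite direct products, and that it is a commensurability invariant (in particular, a group has type $F_n$ if and only if some, equivalently every, finite-index subgroup does).

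First I would observe that, by \cref{ass.hypergraphs}(1), every hypergraph expansion $x \in \mathrm{Ob}(\C)$ is finite, so its hyperedge set $E_x$ is finite and hence $\bigtimes \{ G_{c(e)} \mid e \in E_x \}$ is a \emph{finite} direct product. Each factor $G_{c(e)}$ is one of the groups of the self-similar tuple and therefore has type $F_n$ by hypothesis. Since type $F_n$ is inherited by finite direct products, the group $\bigtimes \{ G_{c(e)} \mid e \in E_x \}$ has type $F_n$.

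Next I would invoke \cref{prop.finite.index.stabilizers}, which exhibits a copy of $\bigtimes \{ G_{c(e)} \mid e \in E_x \}$ sitting inside $\C^\times(x,x)$ with finite index. Because type $F_n$ is inherited by finite-index overgroups, it follows at once that $\C^\times(x,x)$ has type $F_n$, as required.

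There is no substantial obstacle here: the real content has already been packaged into \cref{prop.finite.index.stabilizers}, and the remaining steps are applications of well-known facts about finiteness properties. The only point demanding a moment's care is the finiteness of the index set $E_x$, which is needed in order to apply closure under direct products (an infinite direct product of nontrivial groups need not even be finitely generated, i.e.\ of type $F_1$); this is precisely what \cref{ass.hypergraphs}(1) guarantees.
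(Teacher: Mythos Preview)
Your proposal is correct and follows essentially the same approach as the paper: invoke \cref{prop.finite.index.stabilizers}, use closure of type $F_n$ under finite direct products, and then pass to the finite-index overgroup. The paper's proof is virtually identical (it even cites the same two standard facts from Geoghegan's book), though it does not explicitly flag the role of \cref{ass.hypergraphs}(1) in ensuring the product is finite as you do.
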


\begin{proof}
It is known that a group and its finite-index subgroups share their finiteness properties (see for example \cite[Corollary 7.2.4]{Geoghegan}).
Then, in light of \cref{prop.finite.index.stabilizers}, it suffices to show that $\bigtimes \{ G_{c(e)} \mid e \in E_x \}$ has type $F_n$.
The product of two groups of type $F_n$ has type $F_n$ (see for example Exercise 1 of Section 7.2 \cite{Geoghegan}), so $\bigtimes \{ G_{c(e)} \mid e \in E_x \}$ have type $F_n$ as soon as the groups of the self-similar type have type $F_n$, as needed.
\end{proof}

\subsection{The complexes}

A \textbf{height function} for a category $\mathcal{X}$ is a map $\rho \colon \mathrm{Ob}(\mathcal{X}) \to \mathbb{N}$ that satisfies the two following properties:
\begin{enumerate}
    \item if $\mathcal{X}(x,y)$ includes some invertible morphism then $\rho(x)=\rho(y)$;
    \item if $\mathcal{X}(x,y)$ includes some non-invertible morphism then $\rho(x) < \rho(y)$.
\end{enumerate}

For our category $\C$, we let $\rho(x)$ denote the number of hyperedges of $x$, which is readily seen to be a height function.

Following Witzel's notations from \cite{Witzel}, we consider the set of elementary elements: $\E = \C^\times \cup \C^\times \mathcal{S} \C^\times$.
Recall that $\C^\times$ consists of certain labeled $\pi$-isomorphisms of hypergraphs, as explained in \cref{rmk.invertibles}.
Moreover, $\C^\times \mathcal{S} \C^\times = \mathcal{S} \C^\times$, so ultimately $\E = \C^\times \cup \mathcal{S} \C^\times$ consists of the invertibles of $\C$, possibly followed by a composition of parallel contractions.

Define $E(x)$ as the set of equivalence classes in $\E(-,x) \setminus \E^\times(x,x)$ modulo the equivalence relation that relates $f$ and $g$ when there exists an $h \in \C^\times$ with $hg=f$.
We denote by $\bar{f} \in E(x)$ the class of $f \in \E(-,x) \setminus \E^\times(x,x)$.
Note that $\E(-,x) \setminus \E^\times(x,x)$ consists of elements $hg$, with $g \in \C^\times(-,x)$ and $h$ a composition of parallel contractions, where $h$ must be non-trivial when $g \in \C^\times(x,x)$, but can be non-trivial when $g \in \C^\times(y,x)$ for $x \neq y$.

We define a partial order on $E(x)$ by setting $\bar{f} \leq \bar{g}$ when there exists an $h \in \C$ such that $hg = f$.
To apply \hyperref[thm.witzel]{Witzel's Theorem} we need to give sufficient conditions for which $|E(x)|$ are $(n-1)$-connected.

As in \cite{SWZ19}, we will not work directly with $|E(x)|$.
We will soon introduce a simplicial complex $K_x$ whose barycentric subdivision is isomorphic to $|E(x)|$.

\subsection{\texorpdfstring{$\boldsymbol{\pi}$}{pi}-contractions}

Given a hypergraph $x \in \mathrm{Ob}(\C)$, we denote by $\boldsymbol{\C^\times \cdot x}$ the set of those hypergraphs $y$ such that $\C^\times(y,x) \neq \emptyset$.
Clearly, this partitions the set $\mathrm{Ob}(\C)$ into finite subsets of objects with the same height (number of hyperedges).

\begin{definition}
\label{def.pi.contraction}
A \textbf{simple $\boldsymbol{\pi}$-contraction} of a hypergraph $x \in \mathrm{Ob}(\C)$ is a simple contraction $f \in \mathcal{F}(-,y)$ for some $y \in \C^\times \cdot x$.
\end{definition}

In practice, a simple $\pi$-contraction of $x$ is a simple contraction $f$ of some hypergraph $y$ that can be obtained from $x$ via some diagram.
Once a $g \in \C^\times(y,x)$ has been fixed, $f$ can be seen as a contraction of a subhypergraph of $x$ that first needs to be ``adjusted'' by $g$.

For example, consider the replacement system $\mathcal{A}$ from \cref{ex.replacement.systems} and the self-similar tuple $\mathbb{G} = \left( G_{\text{\textcolor{blue}{b}}} = \langle \phi_\mathcal{A} \rangle = \{1,\phi_\mathcal{A}\}, G_{\text{\textcolor{red}{r}}} = 1 \right)$, with $\phi_\mathcal{A}$ as described in \cref{ex.color.automorphism}.
The graph expansion depicted in \cref{fig.airplane.contractions} features the following four simple $\pi$-contractions:
a simple contraction of the subgraph with edges $II$, $IF$, $IT$ and $IB$;
a simple contraction of the subgraph with edges $FI$, $FF$, $FT$ and $FB$;
two simple $\pi$-contractions of the subgraph with edges $II$, $FI$, $T$ and $B$ (in one $\lambda_{R_{\text{\textcolor{blue}{b}}}}(1)$ is identified with $\lambda_{II}(1)$, in the other with $\lambda_{FI}(1)$).
The $\pi$-contractions require reversing the orientation of the edges $II$ and $FI$, which can be done using $\phi_\mathcal{A}$.

\begin{figure}
\centering
\begin{tikzpicture}[scale=.83333,font=\small]
    \draw[edge,red,domain=5:175] plot ({1*cos(\x)}, {1*sin(\x)});
    \draw (90:1) node[above,red] {$T$};
    \draw[edge,red,domain=185:355] plot ({1*cos(\x)}, {1*sin(\x)});
    \draw (270:1) node[above,red] {$B$};
    \draw[edge,red,domain=5:175] plot ({.4*cos(\x)-2.25}, {.4*sin(\x)});
    \draw (-2.25,.4) node[above,red] {$IB$};
    \draw[edge,red,domain=185:355] plot ({.4*cos(\x)-2.25}, {.4*sin(\x)});
    \draw (-2.25,-.4) node[below,red] {$IT$};
    \draw[edge,red,domain=5:175] plot ({.4*cos(\x)+2.25}, {.4*sin(\x)});
    \draw (2.25,.4) node[above,red] {$FT$};
    \draw[edge,red,domain=185:355] plot ({.4*cos(\x)+2.25}, {.4*sin(\x)});
    \draw (2.25,-.4) node[below,red] {$FB$};
    \node[vertex] (l) at (-3.5,0) {};
    \node[vertex] (l1) at (-2.65,0) {};
    \node[vertex] (l2) at (-1.85,0) {};
    \node[vertex] (cl) at (-1,0) {};
    \node[vertex] (cr) at (1,0) {};
    \node[vertex] (r1) at (2.65,0) {};
    \node[vertex] (r2) at (1.85,0) {};
    \node[vertex] (r) at (3.5,0) {};
    \draw[edge,blue] (l1) to node[below]{$IF$} (l);
    \draw[edge,blue] (l2) to node[below]{$II$} (cl);
    \draw[edge,blue] (r1) to node[below]{$FF$} (r);
    \draw[edge,blue] (r2) to node[below]{$FI$} (cr);
\end{tikzpicture}
\caption{The graph expansion $\Gamma_0 \triangleleft I \triangleleft F$ of $\mathcal{A}$ that allows four simple $\pi$-contractions, with only a pair of parallel ones.}
\label{fig.airplane.contractions}
\end{figure}
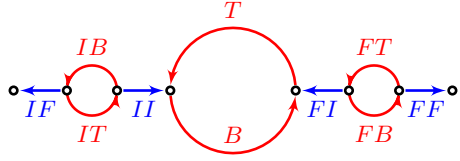

Two simple $\pi$-contractions $f_1$ and $f_2$ of $x$, say $f_i \in \mathcal{F}(-,y_i)$ with $y_i \in \C^\times \cdot x$, are \textbf{parallel} when there exist $g_i \in \C^\times(y_i,x)$ such that the two subhypergraphs of $x$ that are contracted in $f_1 g_1$ and $f_2 g_2$ do not share hyperedges.
More generally, this definition can be naturally extended to finite sets of $\pi$-contractions of $x$ by requiring the same condition on each pair $f_i g_i$ and $f_j g_j$.
For example, in the graph expansion depicted in  \cref{fig.airplane.contractions}, the two simple contractions are parallel, whereas each of the two $\pi$-contractions is not parallel to any other $\pi$-contraction.

\begin{remark}
\label{rmk.parallel.pi.contractions}
Given a set of parallel simple $\pi$-contractions, there exists a $g \in \C^\times(y,x)$ for a $y$ where all the contractions can be realized.
This is found by individually ``adjusting'' the subhypergraphs of $x$ that are being contracted.
Hence, in practice, when one wishes to find a family of parallel simple $\pi$-contractions of $x$, it suffices to consider the simple contractions of $y$.
\end{remark}

For any fixed $x \in \mathrm{Ob}(\C)$, consider the simplicial complex $\boldsymbol{K_x}$ built as follows.
\begin{itemize}
    \item Its vertices are the equivalence classes of simple $\pi$-contractions of $x$ up to left multiplication by invertibles.
    We will write $\boldsymbol{[h]}$ to denote a vertex of $K_x$, i.e., the equivalence class of $h$.
    \item Vertices $[f_1], \dots, [f_k]$ form a simplex when $f_1, \dots, f_k$ are parallel simple $\pi$-contractions.
\end{itemize}

\begin{remark}
If we restrict our attention on the cases in which the gluing relation is trivial (i.e., when the limit space is an edge shift), there are no non-trivial invertibles between different objects of the category, so simple $\pi$-contractions correspond to simple contractions.
For Scott-R\"{o}ver-Nekrashevych groups (i.e., when the edge shift is a full shift), our complexes are the same that were built in \cite{SWZ19}.
\end{remark}

A vertex of $E(x)$ is an element $sc$ with $s \in \mathcal{S}$ and $c \in \C^\times$.
Then $c$ is some diagram $y \leftarrow x$ and $s$ is a sequence of parallel simple contractions starting from $y$.
We define a map that sends $sc$ to the set of (equivalence classes of) simple contractions of which $s$ is composed.
These are parallel simple $\pi$-contractions of $x$, so the image of each vertex of $E(x)$ is a simplex of $K_x$.
It is clear that this map is a bijection and that it maps pairs of adjacent vertices of $E(x)$ to simplices that are one contained in the other, so we have the following.

\begin{proposition}
\label{prop.barycentric.subdivision}
For each $x \in \mathrm{Ob}(\C)$, the complex $|E(x)|$ is isomorphic to the barycentric subdivision of $K_x$, so the two are homeomorphic.
Moreover, the simplicial complex $K_x$ is flag.
\end{proposition}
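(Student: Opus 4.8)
The plan is to establish the bijection between vertices of $|E(x)|$ and simplices of $K_x$ explicitly, verify that it respects the incidence/containment structure (which is what a barycentric subdivision encodes), and then separately argue flagness by an elementary ``compatible pairwise parallelism implies mutual parallelism'' argument.

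First I would carefully set up the correspondence. A vertex of $|E(x)|$ is a class $\bar{f} \in E(x)$, and by the discussion preceding the statement, each such $\bar f$ has a canonical representative of the form $sc$ with $c \in \C^\times(y,x)$ and $s \in \mathcal{S}$ a composition of parallel simple contractions of $y$. To such $sc$ I associate the set of equivalence classes (up to left multiplication by invertibles) of the simple contractions composing $s$, each viewed as a simple $\pi$-contraction of $x$ via the fixed $c$. By \cref{rmk.parallel.pi.contractions} and the two-notions-of-parallelism equivalence established earlier, this set is precisely a collection of parallel simple $\pi$-contractions of $x$, hence a simplex of $K_x$. I would then check that this assignment is well-defined on the equivalence classes of $E(x)$: replacing $sc$ by $h\,sc$ for an invertible $h$ merely precomposes each constituent contraction by an invertible, leaving its class $[\cdot]$ in $K_x$ unchanged. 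Injectivity follows because the set of constituent contractions (with their $\pi$-adjustments) reconstructs $s$ up to reordering and reconstructs $c$ up to invertibles; surjectivity is immediate since any simplex $\{[f_1],\dots,[f_k]\}$ of parallel $\pi$-contractions lifts, again by \cref{rmk.parallel.pi.contractions}, to a common $c \in \C^\times(y,x)$ and a parallel composition $s = f_1' \cdots f_k'$, giving the vertex $\overline{sc}$.

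Next I would verify the order-theoretic compatibility. The partial order on $E(x)$ is $\bar{f} \leq \bar{g}$ iff $hg = f$ for some $h \in \C$, and I claim this corresponds exactly to reverse inclusion (or inclusion, matching the convention) of the associated simplices: a longer composition of parallel contractions factors through a shorter one precisely when the shorter one's constituent contractions form a subset of the longer one's. Thus chains in $(E(x), \leq)$ correspond to flags of simplices in $K_x$, which is the defining feature of the barycentric subdivision: the simplices of $|E(x)|$ are the chains in the face poset of $K_x$. This yields the simplicial isomorphism $|E(x)| \cong \mathrm{sd}(K_x)$, and since a complex is homeomorphic to its barycentric subdivision, $|E(x)|$ and $K_x$ are homeomorphic.

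Finally, for flagness I would show that if vertices $[f_1],\dots,[f_k]$ of $K_x$ are pairwise joined by edges, they span a simplex. Pairwise adjacency means each pair $f_i, f_j$ is parallel, i.e., after choosing invertibles $g_i \in \C^\times(y_i,x)$ the subhypergraphs of $x$ contracted by $f_i g_i$ and $f_j g_j$ share no hyperedges. The key point is that these ``adjusted'' subhypergraphs are determined (up to the invertible) by the classes $[f_i]$ alone, so the pairwise disjointness conditions can be imposed simultaneously: I would argue that one may fix a single $g \in \C^\times(y,x)$ realizing all the $f_i$ at once, reducing to the statement that pairwise edge-disjoint subhypergraphs are mutually edge-disjoint, which is trivially true set-theoretically. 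The main obstacle I anticipate is precisely this simultaneous-realization step — ensuring that the $\pi$-adjustments needed to make different pairs parallel are mutually compatible and can be packaged into one invertible $g$. I expect this to follow from \cref{rmk.parallel.pi.contractions} (which already adjusts subhypergraphs individually and independently), but one must confirm that independent adjustments of disjoint subhypergraphs do not interfere, which is where the hypergraph-combinatorial bookkeeping lives.
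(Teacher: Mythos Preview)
Your proposal is correct and follows essentially the same approach as the paper: the paper sketches the argument in the paragraph immediately preceding the proposition (defining the map $sc \mapsto \{\text{classes of simple contractions composing } s\}$, asserting it is a bijection that sends the partial order on $E(x)$ to inclusion of simplices) and then simply states the result. Your treatment is a faithful and considerably more detailed expansion of that sketch, and your flagness argument---reducing to the set-theoretic triviality that pairwise hyperedge-disjointness of subhypergraphs implies mutual disjointness, via \cref{rmk.parallel.pi.contractions}---is exactly the intended one; the obstacle you flag about simultaneous realization is not a real obstruction, since the adjustments act on disjoint subhypergraphs and so cannot interfere.
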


\subsection{Condition for connectivity of the complexes}
\label{sub.connectivity}

We say that a simplex $\sigma$ of $K$ is a \textbf{$\boldsymbol{k}$-ground} if every vertex of $K$ is adjacent to all but at most $k$ vertices of $\sigma$.
A simplicial complex $K$ is \textbf{$\boldsymbol{(n,k)}$-grounded} if there is an $n$-simplex that is a $k$-ground.
We want to use the following theorem by Belk and Forrest, which is also referenced as Theorem 4.11 and applied in \cite{SWZ19}.

\begin{theorem}[Theorem 4.9 of \cite{BF19}]
\label{thm.grounded.implies.connected}
For all $m, k \in \mathbb{N}$, each $(mk, k)$-grounded flag complex is $(m-1)$-connected.
\end{theorem}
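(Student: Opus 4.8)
The plan is to exhibit an explicit contractible subcomplex $W \subseteq K$ that contains the entire $(m-1)$-skeleton $K^{(m-1)}$, and then transport connectivity from $W$ up to $K$ through the skeletal inclusion. Write $\sigma = \{v_0, \dots, v_{mk}\}$ for the $mk$-simplex witnessing the $k$-ground condition, and for each $i$ let $\mathrm{st}(v_i)$ be the closed star of $v_i$, i.e.\ the subcomplex of all simplices $\tau$ with $\tau \cup \{v_i\} \in K$. I would set $W = \bigcup_{i=0}^{mk} \mathrm{st}(v_i)$ and apply the nerve lemma (in its version for finite covers by subcomplexes whose nonempty intersections are contractible) to this cover of $W$.

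First I would pin down the intersections. Using that $K$ is a \emph{flag} complex and that any subset of $\sigma$ spans a simplex, one checks that for every index set $I \subseteq \{0, \dots, mk\}$ the intersection $\bigcap_{i \in I} \mathrm{st}(v_i)$ is precisely the closed star $\mathrm{st}(\sigma_I)$ of the face $\sigma_I = \{v_i : i \in I\}$: flagness upgrades the condition ``$\tau$ is adjacent to each $v_i$ separately'' to ``$\tau \cup \sigma_I$ is a single simplex''. Each such star is nonempty (it contains $\sigma_I$) and contractible (geometrically it is a union of simplices each containing $\sigma_I$, hence star-shaped about $\sigma_I$). Thus every nonempty finite intersection is contractible, and since every subcollection of the $v_i$ has nonempty intersection, the nerve of the cover is the full simplex on $mk+1$ vertices. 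By the nerve lemma, $W$ is homotopy equivalent to this nerve and is therefore contractible.

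Next I would establish $K^{(m-1)} \subseteq W$ by a counting argument. A simplex $\tau$ of dimension at most $m-1$ has at most $m$ vertices; by the $k$-ground hypothesis each vertex of $\tau$ fails to be adjacent to at most $k$ of the $v_i$, so at most $m \cdot k$ vertices of $\sigma$ are non-adjacent to some vertex of $\tau$. Since $\sigma$ has $mk + 1 > mk$ vertices, some $v_i$ is adjacent to every vertex of $\tau$, whence $\tau \cup \{v_i\} \in K$ by flagness and $\tau \in \mathrm{st}(v_i) \subseteq W$.

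Finally I would bridge from $W$ to $K$ along the inclusions $K^{(m-1)} \subseteq W \subseteq K$. For a CW complex the inclusion of the $(m-1)$-skeleton induces a surjection on $\pi_i$ for every $i \le m-1$ (an isomorphism for $i \le m-2$); as this map factors through $\pi_i(W)$, the map $\pi_i(W) \to \pi_i(K)$ is surjective for $i \le m-1$. Since $W$ is contractible we get $\pi_i(W) = 0$, forcing $\pi_i(K) = 0$ for all $i \le m-1$, which together with $K \neq \emptyset$ is exactly $(m-1)$-connectedness. I expect the main obstacle to be precisely this last bridging step: the closed-star cover only reaches the $(m-1)$-skeleton and generally fails to cover the $m$-skeleton (already for $m=k=1$ an edge can avoid both stars), so one cannot simply identify $W$ with $K$ through the relevant dimension and must instead argue via the skeletal inclusion and the surjectivity of $\pi_i(K^{(m-1)}) \to \pi_i(K)$. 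The other delicate point is the flagness-based identification of the intersections $\bigcap_{i \in I}\mathrm{st}(v_i)$ with the contractible stars $\mathrm{st}(\sigma_I)$, which is what makes the nerve collapse to a simplex.
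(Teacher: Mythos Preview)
The paper does not supply a proof of this statement: it is quoted verbatim as Theorem~4.9 of \cite{BF19} and used as a black box. So there is no ``paper's own proof'' to compare against here.

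That said, your argument is correct and self-contained. The three ingredients all check out: (i) in a flag complex, $\bigcap_{i\in I}\mathrm{st}(v_i)=\mathrm{st}(\sigma_I)$, and this closed star is a cone with apex any $v_{i_0}\in\sigma_I$ (since $\tau\cup\{v_{i_0}\}$ is a face of $\tau\cup\sigma_I$), hence contractible; (ii) the pigeonhole count shows every simplex with $\le m$ vertices lies in some $\mathrm{st}(v_i)$, giving $K^{(m-1)}\subseteq W$; (iii) the factoring of $\pi_i(K^{(m-1)})\twoheadrightarrow\pi_i(K)$ through the contractible $W$ kills $\pi_i(K)$ for $i\le m-1$. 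For context, the original Belk--Forrest argument proceeds instead by induction on $m$, directly filling simplicial spheres by coning over vertices of the ground simplex; your nerve-lemma route is a genuinely different packaging that trades the inductive sphere-filling for a single global homotopy equivalence, at the cost of invoking the nerve theorem.
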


Denote by $d$ the maximum amount of hyperedges among the replacement hypergraphs of $\R$.

\begin{definition}
An hyperedge replacement system is \textbf{$\boldsymbol{m}$-contractive} for a compatible self-similar tuple $\mathbb{G}$ if all but finitely many hypergraph expansions admit at least $m$ parallel simple $\pi$-contractions based on $\mathbb{G}$.
It is \textbf{$\boldsymbol{\infty}$-contractive} if it is $m$-contractive for all $m \in \mathbb{N}$
\end{definition}

\begin{lemma}
Let $x \in \mathrm{Ob}(\C)$ and assume that $x$ admits at least $m$ parallel simple $\pi$-contractions.
Then the complex $K_x$ is $(m,d)$-grounded.
\end{lemma}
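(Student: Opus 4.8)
The plan is to exhibit the simplex spanned by the given contractions and check the grounding condition by hand. First I would apply \cref{rmk.parallel.pi.contractions}: since $x$ admits $m$ parallel simple $\pi$-contractions $f_1, \dots, f_m$, there is a single $y \in \C^\times \cdot x$ together with an invertible $x \to y$ realizing all of them simultaneously as genuine simple contractions of $y$, collapsing pairwise hyperedge-disjoint subhypergraphs $S_1, \dots, S_m$ of $y$. Because the $S_i$ share no hyperedges, the classes $[f_1], \dots, [f_m]$ are distinct and, being pairwise parallel, they span a simplex $\sigma$ of $K_x$ on these $m$ vertices. This $\sigma$ is my candidate ground, and proving it is a $d$-ground yields exactly the asserted $(m,d)$-groundedness.

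Next I would compare an arbitrary vertex $[h]$ of $K_x$ to $\sigma$ inside the fixed realization $y$. The key observation is that every vertex of $K_x$ has a representative that collapses a subhypergraph of $y$ itself: given any simple $\pi$-contraction $h$ collapsing a subhypergraph of some $z \in \C^\times \cdot x$, the objects $z$ and $y$ are both $\pi$-isomorphic to $x$, hence to each other, and transporting $h$ along the invertible between $y$ and $z$ produces an equivalent simple contraction of $y$ collapsing a subhypergraph $S_h$. Being a single simple contraction, $S_h$ is the image of one replacement hypergraph, so by the definition of $d$ it has at most $d$ hyperedges.

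The argument then closes by a pigeonhole count. If $S_h$ shares no hyperedge with $S_i$, then $h$ and $f_i$ are collapsed on disjoint subhypergraphs of the common $y$, so they are parallel and $[h]$ is adjacent to $[f_i]$ in $K_x$; contrapositively, $[h]$ can fail to be adjacent to $[f_i]$ only if $S_h$ meets $S_i$ in some hyperedge. Since $S_1, \dots, S_m$ are pairwise hyperedge-disjoint, each of the at most $d$ hyperedges of $S_h$ lies in at most one $S_i$, so $S_h$ meets at most $d$ of them. Hence $[h]$ is adjacent to all but at most $d$ vertices of $\sigma$, which is precisely the condition that $\sigma$ is a $d$-ground, and therefore $K_x$ is $(m,d)$-grounded.

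I expect the main obstacle to be the representative-change step of the second paragraph: one must verify that transporting a simple $\pi$-contraction along the invertible between two objects of $\C^\times \cdot x$ genuinely produces a representative of the same vertex class, i.e.\ that the equivalence defining the vertices of $K_x$ is insensitive to the choice of realization $y$. Once this is established, care is only needed to use the disjointness-implies-parallel implication in the correct one-sided direction (parallelism a priori permits re-choosing the adjusting invertibles, so the converse need not hold in the fixed $y$), after which the counting is immediate.
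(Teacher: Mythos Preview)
Your proposal is correct and follows essentially the same approach as the paper: exhibit the simplex spanned by the $m$ parallel $\pi$-contractions realized in a single object $y$, then for an arbitrary vertex transport to a subhypergraph of $y$ with at most $d$ hyperedges and use disjointness of the $S_i$ to bound the non-adjacencies by $d$. The paper handles your transport step more tersely by invoking the isomorphism $K_x \cong K_y$ and then working entirely in one object, but the counting argument is identical, and you are right to flag that only the one-sided implication ``disjoint in $y$ $\Rightarrow$ parallel'' is needed.
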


\begin{proof}
By assumption, $x$ admits at least $m$ parallel simple $\pi$-contractions $f_1, \dots, f_m$.
They form an $m$-simplex $\{ [f_1], \dots, [f_m] \}$, which we want to prove to be a $d$-ground.

If $\C^\times(x,y)$ is not empty, say $h \in \C^\times(x,y)$, then $x$ and $y$ have the same simple $\pi$-contractions up to conjugation by $h$, so $K_x$ and $K_y$ are isomorphic.
Then, without loss of generality, let us assume that $x$ is as described in \cref{rmk.parallel.pi.contractions}, i.e., the parallel simple $\pi$-contractions $f_1, \dots, f_m$ are simple contractions of $x$.

%
Each simple contraction $f$ of $x$ contracts a subhypergraph of $x$ consisting of at most $d$ hyperedges.
Since the simple contractions $f_1, \dots, f_m$ are parallel, each hyperedge of $x$ can be involved in at most one of them.
Thus, a simple $\pi$-contraction $f$ of $x$ is not parallel to at most $d$ simple contractions in $\{ [f_1], \dots, [f_m] \}$.
This means that $f$ is adjacent to all vertices of $\{ [f_1], \dots, [f_m] \}$ but at most $d$, as needed.
\end{proof}

Thanks to \cref{thm.grounded.implies.connected}, we have the following fact.

\begin{corollary}
\label{cor.contractive.implies.connected}
Let $x \in \mathrm{Ob}(\C)$ and assume that $x$ admits at least $m$ parallel simple $\pi$-contractions.
Then $K_x$ is $(\lfloor m/d \rfloor -1)$-connected.
\end{corollary}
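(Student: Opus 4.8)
The plan is to derive this immediately from the preceding Lemma together with the grounding criterion of Belk and Forrest (\cref{thm.grounded.implies.connected}). That Lemma tells us that, under the hypothesis that $x$ admits at least $m$ parallel simple $\pi$-contractions, the complex $K_x$ is $(m,d)$-grounded: it contains an $m$-simplex $\sigma$ that is a $d$-ground. Since $K_x$ is flag by \cref{prop.barycentric.subdivision}, the criterion is applicable; the only remaining work is to match the parameter $m$ against the shape $mk$ appearing in \cref{thm.grounded.implies.connected}.

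First I would record the (essentially trivial) monotonicity of grounding: if $\sigma$ is a $d$-ground then so is every face $\tau \subseteq \sigma$, because for any vertex the set of vertices of $\tau$ to which it fails to be adjacent is contained in the corresponding set for $\sigma$, and hence still has cardinality at most $d$. Therefore $(m,d)$-groundedness of $K_x$ entails $(n,d)$-groundedness for every $n \leq m$. Setting $m' \coloneq \lfloor m/d \rfloor$, we have $m' d \leq m$, so $K_x$ is in particular $(m'd, d)$-grounded.

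Finally I would apply \cref{thm.grounded.implies.connected} with the integer $m'$ and with $k = d$: the flag complex $K_x$, being $(m'd,d)$-grounded, is $(m'-1)$-connected, that is, $(\lfloor m/d \rfloor - 1)$-connected, which is exactly the desired conclusion. I do not anticipate any genuine obstacle here, as the argument is simply a bookkeeping combination of two results already in hand. The one point deserving explicit mention is the observation that a face of a $k$-ground is again a $k$-ground, since this is what lets us pass from the $m$-simplex ground furnished by the Lemma down to an $m'd$-simplex ground, so that the hypothesis of \cref{thm.grounded.implies.connected} is met verbatim.
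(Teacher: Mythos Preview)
Your proposal is correct and follows exactly the approach the paper takes: the corollary is stated as an immediate consequence of the preceding Lemma together with \cref{thm.grounded.implies.connected}, and you have simply made explicit the monotonicity step (a face of a $k$-ground is again a $k$-ground) that the paper leaves tacit.
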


Suppose that the hyperedge replacement system is $m$-contractive.
Then every $K_x$ is $(\lfloor m/d \rfloor -1)$-connected except for a finite amount of $x \in \mathrm{Ob}(\C)$.
Since for each $n \in \mathbb{N}$ there are only finitely hypergraphs of $\mathrm{Ob}(\C)$ with at most $n$ hyperedges, the set $\{x \in \mathrm{Ob}(\C) \mid \rho(x) \leq n\}$ is finite.
The rest of the hypotheses of \hyperref[thm.witzel]{Witzel's Theorem} were proved in \cref{cor.our.category.is.ore}, \cref{prop.our.garside.family} and \cref{cor.finiteness.stabilizers,,cor.contractive.implies.connected}, so we ultimately have our desired result.

\begin{theorem}
\label{thm.main}
If an almost expanding replacement system $\R$ is $m$-contractive and the groups of a compatible self-similar tuple $\mathbb{G}$ all have type $F_{\lfloor m/d \rfloor}$, then the ESS group $E_\R\mathbb{G}$ has type $F_{\lfloor m/d \rfloor}$.
In particular, if the replacement system is $\infty$-contractive and the groups of $\mathbb{G}$ have type $F_\infty$, then $E_\R\mathbb{G}$ has type $F_\infty$.
\end{theorem}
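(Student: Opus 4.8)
The plan is to assemble the results established throughout this section and feed them into \hyperref[thm.witzel]{Witzel's Theorem}, taking $\mathcal{X} = \C$, basepoint $\star = \Gamma_0$, and $n = \lfloor m/d \rfloor$. Since $E_\R\mathbb{G} = \pi_1(\C,\Gamma_0)$, the conclusion of that theorem will be exactly the desired statement. First I would recall that the structural hypotheses are already in place: $\C$ is factor-finite and right-Ore by \cref{cor.our.category.is.ore}, and $\mathcal{S} = \mathrm{Div}(\Delta)$ is a locally finite left-Garside family closed under taking factors by \cref{prop.our.garside.family}. The height function $\rho(x)$ counting hyperedges has finite sublevel sets, because only finitely many hypergraph expansions have at most a given number of hyperedges.

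Next I would verify the two remaining hypotheses for this choice of $n$. Condition \textsc{(STAB)} follows immediately from \cref{cor.finiteness.stabilizers}: if every group of $\mathbb{G}$ has type $F_{\lfloor m/d \rfloor}$, then $\C^\times(x,x)$ has type $F_{\lfloor m/d \rfloor}$ for \emph{every} $x \in \mathrm{Ob}(\C)$, which is stronger than what \textsc{(STAB)} demands. For condition \textsc{(LK)}, I would combine \cref{prop.barycentric.subdivision}, which identifies $|E(x)|$ with the barycentric subdivision of $K_x$ (so the two are homeomorphic), with \cref{cor.contractive.implies.connected}, which shows that $K_x$ is $(\lfloor m/d\rfloor - 1)$-connected as soon as $x$ admits at least $m$ parallel simple $\pi$-contractions. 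Since $\R$ is $m$-contractive, all but finitely many hypergraph expansions have this property.

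With all hypotheses verified, Witzel's Theorem yields that $E_\R\mathbb{G} = \pi_1(\C,\Gamma_0)$ has type $F_{\lfloor m/d \rfloor}$, proving the first assertion. For the second, if $\R$ is $\infty$-contractive then it is $m$-contractive for every $m$, and if the groups of $\mathbb{G}$ have type $F_\infty$ then they have type $F_{\lfloor m/d\rfloor}$ for every $m$. Letting $m \to \infty$, so that $\lfloor m/d \rfloor \to \infty$, the first assertion gives type $F_n$ for every $n$, that is, type $F_\infty$.

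I expect the only genuinely delicate point to be the passage from $m$-contractivity, which is a statement about cofinitely many objects, to the uniform threshold $N$ required by \textsc{(LK)}: one must observe that the finitely many exceptional objects have bounded height, so that choosing $N$ strictly larger than all of them guarantees that $|E(x)|$ is $(\lfloor m/d\rfloor - 1)$-connected for every $x$ with $\rho(x) \geq N$. Everything else is bookkeeping that matches the previously established propositions against the clauses of Witzel's Theorem.
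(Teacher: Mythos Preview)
Your proposal is correct and follows essentially the same route as the paper: assemble \cref{cor.our.category.is.ore}, \cref{prop.our.garside.family}, \cref{cor.finiteness.stabilizers}, and \cref{cor.contractive.implies.connected} (via \cref{prop.barycentric.subdivision}) to verify the hypotheses of Witzel's Theorem with $n=\lfloor m/d\rfloor$, noting that $\rho$ has finite sublevel sets so the finitely many exceptional objects from $m$-contractivity lie below some threshold $N$. The $F_\infty$ clause is handled exactly as you indicate, by letting $m\to\infty$.
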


We will see some application of this theorem in the next subsection.
We will also see that the replacement system being $m$-contractive (respectively, $\infty$-contractive) is not a necessary condition for $E\mathbb{G}$ having type $F_{\lfloor m/d \rfloor}$ (respectively, $F_\infty$), with the Houghton groups in \cref{cor.example.known.results}(D) and the topological full groups of edge shifts in \cref{sub.Matui}, respectively.


\section{Applications of \texorpdfstring{\cref{thm.main}}{}}

The following are easy applications of \cref{thm.main} that recover known results:

\begin{corollary}
\label{cor.example.known.results}
\begin{enumerate}[label=\Alph*)]
    \item[]
    \item The Higman-Thompson groups from \cite{higman1974finitely} are based on $\infty$-contractive replacement systems, so they are $F_\infty$ (originally proved in \cite{BROWN198745}).
    \item Scott-R\"{o}ver-Nekrashevych groups from \cite{Scott,Rover,NekrashevychGroups} are based on the same replacement systems as the Higman-Thompson groups, so they inherit the same finiteness properties of the self-similar group (originally proved in \cite[Theorem 4.15]{SWZ19}).
    \item The Thompson-like groups $QF$, $QT$ and $QV$ from \cite{QV} are based on $\infty$-contractive hyperedge replacement systems, so they are $F_\infty$ (originally proved in \cite{QVfinitenessproperties}).
    \item For each $n \in \mathbb{N}_{\geq2}$, the Houghton group $H_n$ from \cite{Houghton1978TheFC} is based on an $n$-contractive replacement system (\cref{fig.houghton.replacement.system}), so it has type $F_{\lfloor n/2 \rfloor}$.
    This is not optimal, since \cite{BROWN198745} showed that $H_n$ has type $F_{n-1}$.
\end{enumerate}
\end{corollary}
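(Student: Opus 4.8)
The plan is to treat each item as a direct application of \cref{thm.main}: for each group I would identify the underlying replacement system (checking it is almost expanding), pin down the compatible self-similar tuple together with its finiteness type, compute the number $d$ of hyperedges in the largest replacement hypergraph, and then establish the required degree of contractivity. In every case the real content is a count of parallel simple $\pi$-contractions of large hypergraph expansions, and I would always reduce this, via \cref{rmk.parallel.pi.contractions}, to counting ordinary parallel contractions in a conveniently chosen rearrangement-equivalent expansion $y \in \C^\times \cdot x$.

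For the $\infty$-contractive items (A), (B) and (C) the mechanism is the same rebalancing argument. The Higman--Thompson groups and the quasi-Thompson groups $QF$, $QT$, $QV$ are rearrangement groups, so their tuple is $\mathds{1}$, which has type $F_\infty$; it therefore suffices to prove $\infty$-contractivity. First I would observe that an expansion with $N$ pieces can be carried by an invertible of $\C^\times$ to a ``balanced'' expansion whose bottom carets are pairwise disjoint and number on the order of $N/d$; these carets furnish that many parallel $\pi$-contractions. Since for each $m$ only finitely many expansions have fewer than $m$ pieces, all but finitely many admit at least $m$ parallel simple $\pi$-contractions, giving $\infty$-contractivity and hence type $F_\infty$ by \cref{thm.main}. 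For the Scott--R\"{o}ver--Nekrashevych groups in (B) the same systems are used, now with a self-similar group $G$ as tuple; the nontrivial invertibles only enlarge $\C^\times$, so $\infty$-contractivity is preserved, and \cref{thm.main} transfers the finiteness type of $G$ to $E_\R G$.

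For the Houghton groups (D) the tuple is again trivial and $d = 2$, so the target is exactly $n$-contractivity. The key observation I would record is that every expansion of $\mathcal{H}_n$ (\cref{fig.houghton.replacement.system}) is a disjoint union of single edges: each expansion of a colored edge splits off an isolated black edge and leaves a single colored ``tail''. Consequently a simple contraction is nothing more than a choice of one black edge together with one colored tail of matching color, collapsing the resulting copy of the relevant replacement hypergraph. An expansion carrying at least $n$ black edges then admits $n$ pairwise disjoint such contractions---pair the $n$ tails with $n$ distinct black edges---so it has $n$ parallel simple $\pi$-contractions. As only finitely many expansions have fewer than $n$ black edges, the system is $n$-contractive, and \cref{thm.main} with $d = 2$ yields type $F_{\lfloor n/2 \rfloor}$, which is the claimed (non-optimal) bound.

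The hardest step will be the contractivity verifications themselves rather than the bookkeeping. For the full and quasi-Thompson systems I must check that $\C^\times$ acts transitively enough on equal-size expansions to redistribute any expansion into a balanced form with linearly many disjoint bottom carets; this is where the specific combinatorics of each replacement system enters, and one has to be careful that the gluing constraints (hence the $\pi$ data) do not obstruct the rebalancing. Once contractivity is in hand, the clause ``all but finitely many'' reduces to the elementary fact that, for each height, there are only finitely many hypergraph expansions of that height, which the height function $\rho$ makes immediate.
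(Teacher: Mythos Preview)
Your proposal is correct and matches the paper's approach: the paper states this corollary with no proof at all, treating each item as an immediate application of \cref{thm.main}, and your plan spells out exactly the verification the paper leaves implicit. Your key observations are all sound---for the Higman--Thompson and Scott--R\"{o}ver--Nekrashevych systems the hypergraph expansions are collections of disjoint edges of a single color, so $\C^\times$ acts transitively on expansions of a given size and rebalancing yields linearly many disjoint carets; for the Houghton systems every expansion is a disjoint union of $n$ colored tails and some black edges, each simple $\pi$-contraction consumes one tail, and once at least $n$ black edges are present the $n$ tails can be contracted in parallel, giving exactly $n$-contractivity and hence type $F_{\lfloor n/2 \rfloor}$ with $d=2$.

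One small clarification: for $QF$, $QT$, $QV$ you assert they are rearrangement groups with tuple $\mathds{1}$, which is indeed how the paper treats them (see the remark after \cref{def.isolated.colors} citing \cite{RearrConj}); your caution that the specific combinatorics must be checked is appropriate, but no nontrivial self-similar tuple is needed there.
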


It is not hard to see that the complexes $K_x$ arising from our construction are the same that were used in \cite{BROWN198745}.
What causes this loss of precision lies in the general computations from \cref{sub.connectivity}.

\begin{remark}
\label{rmk.non.example.known.results}
We also note the following non-examples.
\begin{enumerate}[label=\Alph*)]
    \item The basilica replacement system from \cite{BF19} is $2$-contractive but not $3$-contractive.
    In fact, \cite{TBnotfinpres} shows that the associated rearrangement group is finitely generated but not finitely presented.
    \item The homeomorphism group of the Sierpinski triangle is finite by \cite[Theorem 2.3.6]{TriangleGasket}, but the replacement system is not even $2$-contractive (the hypergraph expansions $\Gamma_0 \triangleleft X \triangleleft 3 \triangleleft \dots \triangleleft 3$ feature a unique $\pi$-contraction).
    \item The Apollonian gasket is only $1$-contractive (essentially for the same reason as the Sierpinski triangle), so our \cref{thm.main} does not apply.
\end{enumerate}
\end{remark}

The following subsections explore new results arising from \cref{thm.main}.

\subsection{Finiteness Properties for Rearrangement groups}
\label{sub.finiteness.rrg.gps}

By \cite[Lemma 4.13]{BF19}, if an expanding edge replacement system has finite branching and connected replacement graphs, then it is $\infty$-contractive.
Thus, when dealing with graphs, if the self-similar tuple is trivial, our \cref{thm.main} implies \cite[Theorem 4.1]{BF19}.
However, our \cref{thm.main} applies to certain rearrangement groups that do not satisfy the hypotheses of \cite[Theorem 4.1]{BF19}, as we will shortly see.

The following definition is from \cite{Finitary}.
It is also featured in \cite[Section 3.8.2]{NekSSG} in the case of full shifts.

\begin{definition}
A \textbf{finitary automorphism} is an automorphism $\phi$ of an initial edge shift $\Omega_s$ for which there exists an $N \in \mathbb{N}$ such that, for every $w \in \mathbb{L}_s$ of length $N$, the state $\phi | w$ is trivial.
A \textbf{finitary self-similar tuple} is a self-similar tuple whose every element is finitary.
\end{definition}

In practice, a finitary automorphism $\phi$ admits an $N \in \mathbb{N}$ such that $\phi$ rigidly permutes the level-$N$ cones, doing nothing else below level-$N$.
In fact, this implies that a finirary automorphism is always a rearrangement (\cref{def.rearrangement}).
In particular, we have the following fact.

\begin{proposition}
If $\mathbb{G}$ is a finitary self-similar tuple, then $E_\R\mathbb{G} = RG_\R$
\end{proposition}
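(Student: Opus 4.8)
The plan is to prove the two inclusions separately, the reverse one being immediate. For $RG_\R \subseteq E_\R\mathbb{G}$, recall that a rearrangement is represented by a diagram $(D,R,\f,l)$ in which every label $l_e$ is trivial (\cref{def.rearrangement}); since the identity of each $G_c$ is an admissible label, any such diagram is also a diagram for $\mathbb{G}$, so $RG_\R \subseteq E_\R\mathbb{G}$ holds regardless of the finitariness hypothesis.

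The content is therefore the forward inclusion $E_\R\mathbb{G} \subseteq RG_\R$. Starting from an element $g \in E_\R\mathbb{G}$ represented by a diagram $\F = (D,R,\f,l)$, I would trivialise its labels by repeatedly expanding. Recall the expansion rewriting of \cref{sub.diagram.equivalence}: expanding at a hyperedge $e$ replaces the label of each descendant $ex$ by the first-level state $l_e|x$ (\cref{def.state}). By the finitariness of $l_e \in G_{c(e)}$, there is an $N_e$ such that $l_e|w$ is trivial for every $w \in \mathbb{L}_{c(e)}$ of length $N_e$. Hence, after expanding $e$ to depth $N_e$ (that is, after $N_e$ successive expansions along all descendants of $e$), every hyperedge $ew$ of the new domain with $\dpt(w) = N_e$ carries the label $l_e|w$, which is trivial. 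Performing this for each of the finitely many hyperedges of $D$ yields, after finitely many expansions, an equivalent diagram all of whose labels are trivial. By \cref{lem.equivalence.works} this diagram still represents $g$, and being a pure $\pi$-hypergraph isomorphism with trivial labels it represents an element of $RG_\R$; thus $g \in RG_\R$.

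The one technical point that requires care is the identity $l_e|(wy) = (l_e|w)|y$, which guarantees that the label attached to a depth-$N_e$ descendant of $e$ is exactly the iterated state $l_e|w$ rather than some other composite; this is the standard state-cocycle identity for automorphisms of the coloured forest, following from \cref{def.state} together with $\hat{\phi}(wy) = \hat{\phi}(w)\,\widehat{\phi|w}(y)$ and $C_{wy} = C_y \circ C_w$ on $T_{wy}$. Granting this, one must also check that every intermediate diagram produced along the way is a genuine diagram: this holds because the states $l_e|w$ lie in the appropriate groups $G_{c(ew)}$ by the self-similar tuple property (\cref{def.self.similar.tuple}), and expansions of diagrams are again diagrams (\cref{sub.diagram.equivalence}). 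I expect the verification of the cocycle identity and the uniform-depth bookkeeping to be the only real obstacle; everything else is a direct reading of the definitions.
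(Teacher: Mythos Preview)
Your proof is correct and follows exactly the idea the paper has in mind: the paper does not give a formal proof but only the one-line observation preceding the proposition (a finitary automorphism, permuting level-$N$ cones rigidly, is already a rearrangement), which is precisely your ``expand until the labels trivialise'' argument spelled out at the level of diagrams. Your additional care about the cocycle identity $l_e|(wy) = (l_e|w)|y$ and about intermediate diagrams remaining valid is well placed and fills in details the paper leaves implicit.
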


This fact allows to apply \cref{thm.main} to certain rearrangement groups that do not satisfy the hypotheses of \cite[Theorem 4.1]{BF19}.
As explained in \cite[Remark 4.6]{BF19}, Belk and Forrest's Theorem does not apply to certain simple examples, such as $F \rtimes \mathbb{Z}_2$.
Our \cref{thm.main} essentially solves that issue (it can be applied to $F \rtimes \mathbb{Z}_2$ in the same way that we will apply it to other examples, see below).
However, it still does not yield a necessary condition for $F_\infty$:
for example, we will note in \cref{rmk.bad.shift} that there is a topological full group of an edge shifts which is $F_\infty$ but the replacement system associated to is not $\infty$-contractive.

As examples, in the remainder of this subsection we will prove that the airplane rearrangement group $T_A$ (introduced in \cite[Example 2.13]{BF19} and studied in \cite{airplane}) and the rearrangement groups $G_n$ of Wa\.zewski dendrites (from \cite{dendrite}) have type $F_\infty$.
The proofs are similar:
they both consist of finding a finitary self-similar tuple that allows to invert the edge orientation of blue edges and then show that, with these self-similar tuples, the groups are $\infty$-contractive.

\begin{figure}
\centering
\begin{tikzpicture}
    \node at (0,1.75) {$\Gamma_0$};
    \node[vertex] (b) at (0,0) {};
    \node[vertex] (i) at (180:1.5) {};;
    \node[vertex] (r2) at (125:1.5) {};
    \node[vertex] (rn-1) at (55:1.5) {};
    \node[vertex] (t) at (0:1.5) {};
    \draw[red,edge] (b) to node[below]{$1_0$} (i);
    \draw[red,edge,dotted] (b) to (r2);
    \draw[red] (90:.75) node{$i_0$};
    \draw[red,edge,dotted] (b) to (rn-1);
    \draw[red,edge] (b) to node[below]{$n_0$} (t);
    \begin{scope}[xshift=4.4cm]
    \node at (0,1.75) {$R_{\text{\textcolor{blue}{b}}}$};
    \node[vertex] (b) at (0,0) {};
    \node[vertex] (i) at (180:1.5) {}; \draw (0:-1.5) node[above]{$\lambda_R(1)$};
    \node[vertex] (r2) at (125:1.5) {};
    \node[vertex] (rn-1) at (55:1.5) {};
    \node[vertex] (t) at (0:1.5) {}; \draw (0:1.5) node[above]{$\lambda_R(2)$};
    \draw[blue,edge] (b) to node[below]{$1_I$} (i);
    \draw[red,edge,dotted] (b) to (r2);
    \draw[red] (90:.75) node{$i_I$};
    \draw[red,edge,dotted] (b) to (rn-1);
    \draw[blue,edge] (b) to node[below]{$n_I$} (t);
    \end{scope}
    \begin{scope}[xshift=8.8cm]
    \node at (0,1.75) {$R_{\text{\textcolor{red}{r}}}$};
    \node[vertex] (b) at (0,0) {};
    \node[vertex] (i) at (180:1.5) {}; \draw (0:-1.5) node[above]{$\lambda_R(1)$};
    \node[vertex] (r2) at (125:1.5) {};
    \node[vertex] (rn-1) at (55:1.5) {};
    \node[vertex] (t) at (0:1.5) {}; \draw (0:1.5) node[above]{$\lambda_R(2)$};
    \draw[blue,edge] (b) to node[below]{$1_E$} (i);
    \draw[red,edge,dotted] (b) to (r2);
    \draw[red] (90:.75) node{$i_E$};
    \draw[red,edge,dotted] (b) to (rn-1);
    \draw[red,edge] (b) to node[below]{$n_E$} (t);
    \end{scope}
\end{tikzpicture}
\caption{The Wa\.zewski dendrite replacement systems $\mathcal{D}_n$.}
\label{fig.dendrite.replacement.systems}
\end{figure}
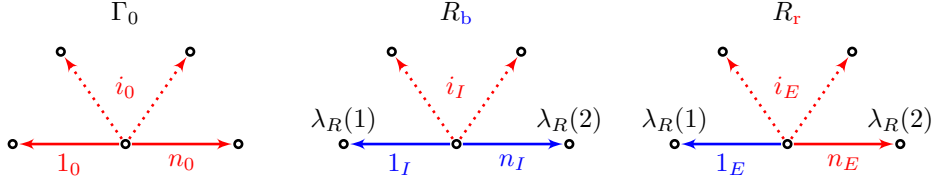

\subsubsection{Dendrite rearrangement groups have type \texorpdfstring{$F_\infty$}{F-infty}}
\label{sub.phin}

For each $n \geq 3$, consider the dendrite replacement system depicted in \cref{fig.dendrite.replacement.systems}.
Colors are not present in \cite{dendrite}, but it is easy to see that their inclusion does not change the limit space nor the group.
Note that an edge of a graph expansion of $\mathcal{D}_n$ is red if it is incident on a degree-$1$ vertex and is blue if both its boundary vertices have degree $n$.

Let $G_{\text{\textcolor{red}{r}}}$ be the trivial group and $G_{\text{\textcolor{blue}{b}}} = \langle \phi_n \rangle$, where $\phi_n$ is defined as
\[ \widehat{\phi_n} \colon 1_I \mapsto n_I,\, 2_I \mapsto 2_I,\, \dots, (n-1)_I \mapsto (n-1)_I,\, n_I \mapsto 1_I, \]
and with state $\phi_n | w$ trivial for each non-empty word $w$.
Clearly $\phi_n$ is a finitary automorphism of the color blue of $\mathcal{D}_n$, so $\mathbb{G}_{\mathcal{D}_n} \coloneq ( G_{\text{\textcolor{blue}{b}}}, G_{\text{\textcolor{red}{r}}} )$ is a finitary self-similar tuple that is compatible with $\mathcal{D}_n$ and thus $RG_{\mathcal{D}_n} = E_{\mathcal{D}_n} \mathbb{G}_{\mathcal{D}_n}$.
Note that $\langle \phi_n \rangle$ is a cyclic group of order $2$, so both groups of the self.similar tuple have type $F_\infty$.
We are only missing $\infty$-contractivity.

\begin{lemma}
\label{lem.dendrites}
Each dendrite replacement system $\mathcal{D}_n$ is $\infty$-contractive with the self-similar tuple $\mathbb{G}_{\mathcal{D}_n}$.
\end{lemma}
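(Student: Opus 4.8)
The plan is to analyze the combinatorial shape of the graph expansions of $\mathcal{D}_n$ and reduce $\infty$-contractivity to a counting statement about finite trees. First I would observe that every graph expansion $\Gamma$ of $\mathcal{D}_n$ is a finite tree in which every vertex is either a leaf (degree $1$) or a \emph{branch point} (degree $n$): each hyperedge expansion replaces an edge by a star centred at a new branch point, leaving the degrees of the two old endpoints unchanged and attaching $n-2$ fresh pendant leaves, so the degrees stay in $\{1,n\}$ by induction. As recorded just after \cref{fig.dendrite.replacement.systems}, an edge is red exactly when it is incident to a leaf and blue exactly when both endpoints are branch points. Passing to the \emph{reduced tree} $T'$ whose vertices are the branch points of $\Gamma$, with two of them joined precisely when an edge of $\Gamma$ connects them, every edge of $T'$ is blue while every pendant edge of $\Gamma$ is red and oriented from its branch point towards its leaf.

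Next I would characterize which branch points support a simple $\pi$-contraction: a branch point $c$ can be collapsed to a single edge exactly when at least $n-2$ of its incident edges are pendant leaf edges, i.e. when $d_{T'}(c)\le 2$. There are two cases. If $d_{T'}(c)=2$ the two boundary edges are blue and the star is contracted to a blue edge via $R_{\text{\textcolor{blue}{b}}}$; if $d_{T'}(c)=1$ one designates the unique blue edge together with one of the $n-1\ge 2$ red leaf edges as the two boundary edges and contracts the star to a red edge via $R_{\text{\textcolor{red}{r}}}$. \textbf{This is the step I expect to be the main obstacle}, since it is precisely where orientations must be matched and where $\pi$-contractions are indispensable: the red boundary and pendant edges already point from centre to leaf, matching both $R_{\text{\textcolor{red}{r}}}$ and $R_{\text{\textcolor{blue}{b}}}$, and cannot be reversed because $G_{\text{\textcolor{red}{r}}}$ is trivial, whereas the blue boundary edges may have the wrong orientation but can always be reversed by labelling them with $\phi_n\in G_{\text{\textcolor{blue}{b}}}$. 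I would check that in each case the required reorientation is realized by a diagram $g\in\C^\times(y,x)$, producing a $y$ in which the collapse is an honest simple contraction, so that it is a simple $\pi$-contraction of $x$ in the sense of \cref{def.pi.contraction}. Two such contractions at centres $c_1,c_2$ are parallel precisely when $c_1,c_2$ are non-adjacent in $T'$ (their stars are then edge-disjoint), and by \cref{rmk.parallel.pi.contractions} a single $g$ simultaneously adjusts the disjoint blue edges involved.

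Finally I would prove that $T'$ always has a large independent set of vertices of degree $\le 2$. Writing $B=|V_{T'}|$ and letting $L,M_2,M_{\ge 3}$ count the branch points of $T'$-degree $1$, $2$, and $\ge 3$, the identity $\sum_v d_{T'}(v)=2(B-1)$ rearranges to $\sum_{d\ge 3}(d-2)=L-2$, hence $M_{\ge 3}\le L-2\le L+M_2$ and so the number $L+M_2$ of collapsible branch points is at least $B/2$. These collapsible vertices induce a subforest of $T'$ of maximum degree $\le 2$, i.e. a disjoint union of paths, which therefore has an independent set of size at least $(L+M_2)/2\ge B/4$, giving at least $B/4$ pairwise parallel simple $\pi$-contractions of $\Gamma$. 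Since an expansion built in $j$ steps has $B=j+1$ branch points and only finitely many expansions have $B$ below any fixed bound, for every $m$ all but finitely many expansions (those with $B>4m$) admit at least $m$ parallel simple $\pi$-contractions, which is exactly $\infty$-contractivity of $\mathcal{D}_n$ with $\mathbb{G}_{\mathcal{D}_n}$.
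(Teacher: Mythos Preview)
Your proof is correct and follows the same overall strategy as the paper: identify the branch points that support a simple $\pi$-contraction, use $\phi_n$ to repair blue-edge orientations, and then exhibit a large pairwise non-adjacent family of such centres. Where you differ is in precision. The paper asserts that \emph{every} internal vertex of $\Gamma$ supports $(n-2)!$ $\pi$-contractions and then appeals to the growth of independent sets in $T'$; but this assertion fails at any branch point adjacent to three or more other branch points, since neither $R_{\text{\textcolor{blue}{b}}}$ (two blue edges) nor $R_{\text{\textcolor{red}{r}}}$ (one blue edge) matches the star there, and $\pi$-isomorphisms cannot change colours. You correctly restrict to branch points with $d_{T'}(c)\le 2$ and then supply the missing counting: the tree identity $\sum_{d\ge 3}(d-2)=L-2$ gives $L+M_2\ge B/2$, and the induced subforest of degree~$\le 2$ yields an independent set of size $\ge B/4$. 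This is exactly the patch the paper's argument needs, so your version is the more complete of the two.
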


\begin{proof}
First observe that a simple $\pi$-contraction in this setting is just a simple contraction that ignores the orientation of blue edges.
Indeed, if two graph expansions only differ by the orientation of one or more blue edges, there is a diagram that has them as domain and range graphs, where each edge whose orientation needs reversing is labeled by $\phi_n$.

Colors aside, each graph expansion of $\mathcal{D}_n$ is a finite tree whose internal vertices have degree $n$ (where by \textit{degree} of a vertex we mean the amount of edges that are incident to it).
A graph expansion $\Gamma$ features precisely $(n-2)!$ $\pi$-contractions of $\Gamma$ for every internal vertex $v$ of $\Gamma$ (note that this fails when looking at ``standard'' contractions, which take into account the edge orientation).
In the context we just described, let us say that the $\pi$-contraction is \textit{centered} at $v$.
Then $\pi$-contractions of $\Gamma$ are parallel precisely when they are centered at distinct vertices which are not adjacent.
Evidently, the maximal amount of distinct vertices that are not adjacent grows to infinity as the number of vertices increases, so for each $m \in \mathbb{N}$ there are only finitely many graph expansions featuring at most $m$ parallel $\pi$-contractions.
This means that $\mathcal{D}_n$ is $m$-contractive for all $m \in \mathbb{N}$, i.e., it is $\infty$-contractive.
\end{proof}

\subsubsection{The airplane rearrangement group has type \texorpdfstring{$F_\infty$}{F-infty}}

For the airplane replacement system $\mathcal{A}$ (\cref{fig.airplane.replacement.system}), let $\phi_\infty \in \Omega_\text{\textcolor{blue}{b}}$ be defined by
\[ \widehat{\phi_\infty} \colon I \mapsto F,\, F \mapsto I,\, T \mapsto B,\, B \mapsto T, \]
and with state $\phi_\infty | w$ trivial for each non-empty word $w$.
Again, $\phi_\infty$ is a finitary automorphism of the color \textcolor{blue}{b}.
If we let $G_\text{\textcolor{blue}{b}} \coloneq \langle \phi_\infty \rangle$ and $G_\text{\textcolor{red}{r}}$ be trivial, then $\mathbb{G}_\mathcal{A} \coloneq ( G_\text{\textcolor{blue}{b}}, G_\text{\textcolor{red}{r}} )$ is a finitary self-similar tuple that is compatible with $\mathcal{A}$.
Once again, $RG_\mathcal{A} = E_\mathcal{A} \mathbb{G}_\mathcal{A}$ and the groups of the self-similar tuple are finite and thus have type $F_\infty$, so we are only missing $\infty$-contractivity.

\begin{lemma}
\label{lem.airplane}
The airplane replacement system $\mathcal{A}$ is $\infty$-contractive with the self-similar tuple $\mathbb{G}_\mathcal{A}$.
\end{lemma}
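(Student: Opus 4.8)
The plan is to mirror the proof of \cref{lem.dendrites}: first describe exactly which simple $\pi$-contractions are available for $\mathcal{A}$ with the tuple $\mathbb{G}_\mathcal{A}$, then produce, in every sufficiently large graph expansion, a number of pairwise \emph{parallel} ones that grows without bound. Since $G_{\text{\textcolor{red}{r}}}$ is trivial and $G_{\text{\textcolor{blue}{b}}} = \langle \phi_\infty \rangle$ only reverses the orientation of blue edges, a simple $\pi$-contraction of $\mathcal{A}$ is exactly an ordinary simple contraction carried out after reversing some blue edges. First I would isolate its two kinds. A \emph{red contraction} is the inverse of a red expansion: it collapses a degree-three vertex carrying two red leaf arcs with distinct endpoints and one pendant blue leaf edge (the propeller of $R_{\text{\textcolor{red}{r}}}$), and uses no reversal. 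A \emph{blue contraction} is the inverse of a blue expansion: it collapses a clean red $2$-cycle $\{T,B\}$ together with the unique blue leaf edge at each of its two vertices, as in the contraction of $\{II,FI,T,B\}$ in \cref{fig.airplane.contractions}; here $\phi_\infty$ is essential, as these two blue tips typically point inward and must be reoriented to match $R_{\text{\textcolor{blue}{b}}}$. Two facts I would record at this stage: every clean red $2$-cycle supports a blue contraction (the far endpoints of the two tips may carry arbitrary further structure), and every pendant blue leaf lying on a subdivided circle supports a red contraction at its base vertex.

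Next I would reduce the problem to counting \emph{contraction sites}. Parallelism means the contracted subgraphs are pairwise edge-disjoint, so the maximal number of simultaneous $\pi$-contractions is the independence number of the conflict graph whose vertices are the sites and whose edges join sites sharing a hyperedge. A red site is determined by its two arc edges and its pendant blue edge, a blue site by its two circle edges and two tips, and one checks that each hyperedge lies in at most two sites; hence the conflict graph has bounded degree and its independence number is at least a constant multiple of the number of sites. It therefore suffices to prove that the number of contraction sites tends to infinity with the number of edges.

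This growth estimate is the crux, and is exactly where a Sierpinski-type degeneracy (cf.\ \cref{rmk.non.example.known.results}) must be excluded. I would organize an expansion $\Gamma$ by its \emph{circle tree}, obtained by collapsing each circle to a node and retaining the blue edges as connecting edges. The decisive structural remark is that sites regenerate along every chain of expansions: subdividing a circle to kill a blue site creates a propeller, hence a pendant blue leaf and a red site, while expanding a propeller to kill a red site creates a fresh clean circle, hence a blue site. Quantitatively, every leaf of the circle tree and every tree-degree-two circle supplies a site — a blue contraction when the circle is clean, and otherwise a red contraction at a pendant blue leaf, which a non-clean circle of small tree-degree necessarily carries — whereas the subdivided circles supplying no site have tree-degree at least three; a handshake count on the tree then forces the number of leaves, and hence the number of sites, to exceed half the number of circles. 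In the complementary regime in which circles are few but edges are many, some single circle is subdivided many times, and the centers of its subdivision arcs supply many sites (pairwise parallel after passing to non-adjacent arc-centers, just as for the internal vertices of a path in \cref{lem.dendrites}).

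The hardest point is precisely this uniform growth estimate: one must handle arbitrary interleavings of circle subdivisions and propeller expansions at once and verify that the ``fully internal'' subdivided circles, whose own edges support no site, are always dominated by the leaves and degree-two circles they force into existence. Once the number of sites is shown to grow with the number of edges, combining it with the bounded-degree conflict estimate yields that for every $m$ all but finitely many expansions admit at least $m$ parallel simple $\pi$-contractions; thus $\mathcal{A}$ is $m$-contractive for every $m$, i.e.\ $\infty$-contractive with $\mathbb{G}_\mathcal{A}$. It is worth flagging that $\phi_\infty$ cannot be dropped: without it the blue spine expansion $\Gamma_0\triangleleft I\triangleleft II\triangleleft\cdots$ admits only its single terminal contraction, so $\mathcal{A}$ would fail to be even $2$-contractive.
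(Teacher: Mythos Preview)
Your plan is essentially the paper's own argument: both pass to the tree obtained by collapsing each red cycle to a node and keeping the blue edges, identify red $\pi$-contractions with leaves and blue $\pi$-contractions with degree-$2$ cycle-nodes, and then combine a handshake count on this tree with a bounded-conflict estimate (each edge of $\Gamma$ lies in at most two contraction sites) to extract linearly many parallel contractions.

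Where you diverge is the two-regime split (``many circles'' versus ``one heavily subdivided circle''), and this comes from a small structural oversight. In every airplane expansion, each vertex on a red cycle carries exactly one blue edge (the $M$ from the red expansion that created it, or one of $I,F$ for the original two vertices). Hence a cycle with $k$ red edges has tree-degree exactly $k$: there are no ``non-clean circles of small tree-degree'', and subdividing a circle automatically adds new leaves to the circle tree. So the ``few circles, many edges'' regime never occurs --- the number of tree vertices is always comparable to the number of edges of $\Gamma$ (in fact $|E_\Gamma|\le 3V$). The paper exploits this directly: with $V_1,V_2$ the numbers of leaves and degree-$2$ vertices of $T_\Gamma$, the handshake inequality gives $V_1+V_2\ge V/2$, hence at least $V/4$ contraction sites, and since each site conflicts with at most two others one gets $\ge V/12$ parallel ones. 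Once you notice the degree-equals-length fact, your sketch collapses to exactly this single clean count.
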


\begin{proof}
As done in \cref{lem.dendrites}, first note that a $\pi$-contraction here is either a ``standard'' red contraction (no orientation reversal allowed) or a blue contraction that ignores the orientation of blue edges.
This is, again, because $\phi_\infty$ allows to switch the orientation of blue edges.

Each blue $\pi$-contraction corresponds to a red cycle made of two sole red edges, while a red $\pi$-contraction corresponds to two adjacent red edges together with a blue edge that is attached to that unique red cycle.
Let us build a tree $T_\Gamma$ from a graph expansion $\Gamma$ of $\mathcal{A}$:
as vertices, it has all the red cycles of $\Gamma$ and all the vertices that are solely adjacent to a blue edge(let us call these \textit{cycles} and \textit{leaves});
two vertices are connected by an edge in $T_\Gamma$ if $\Gamma$ has a blue edge joining the corresponding cycles or leaves.
This tree comes equipped with a \textit{rotation system}, i.e., a cyclic ordering of the edges adjacent to each vertex, corresponding to the cyclic order of the blue edges attached to each red cycle.
Note that two leaves cannot be adjacent, whereas two cycles or a cycle and a leaf can be.
Moreover, the vertices that we called \textit{leaves} truly correspond to the leaves of $T_\Gamma$.
Now, blue $\pi$-contractions correspond to cycles of degree $2$ of $T_\Gamma$ and red $\pi$-contractions to its leaves.
Furthermore, $\pi$-contractions are parallel precisely when the blue $\pi$-contractions do not correspond to cycles that are adjacent in $T_\Gamma$ and the red $\pi$-contractions do not correspond to leaves of $T_\Gamma$ whose unique adjacent blue edges are consecutive in the rotation system around a cycle (a red and a blue $\pi$-contraction are always parallel, since we only allow contractions leading to elements of $\mathrm{Ob}(\C_{\mathcal{A},\mathbb{G}_\mathcal{A}})$).

Let $C$ be the total number of contractions of $\Gamma$, $V$ the amount of vertices of $T_\Gamma$, $V_1$ be the number of leaves, $\widetilde{V_1}$ that of leaves whose sole adjacent vertex has degree at least $3$, $V_2$ be the number of vertices of degree $2$ and $\widetilde{V_2}$ the number of vertices of degree $2$ that are not adjacent to a leaf.
Then $C = \widetilde{V_1} + V_2 = V_1 + \widetilde{V_2}$, so
\[ 2C = (\widetilde{V_1} + V_2) + (V_1 + \widetilde{V_2}) \geq V_1 + V_2 \geq V/2. \]
(The last inequality holds in any finite tree).
Then, ultimately, $C \geq V/4$.

Now, assume that $T_\Gamma$ has at most $V$ vertices.
Then it must admit at least $V/4$ contractions.
Each contraction can be non-parallel to at most two other contractions, so there must be $C/3 \geq V/12$ parallel contractions.
Since for each $m=V$ there are finitely many graph expansions of $\mathcal{A}$ with at most $V$ vertices, we have that only finitely many graph expansions can have less than $V/12$ parallel contractions, so ultimately $\mathcal{A}$ is $\infty$-contractive.
\end{proof}

\begin{corollary}
The airplane rearrangement group $T_A$ and the dendrite rearrangement groups $G_n$ have type $F_\infty$.
\end{corollary}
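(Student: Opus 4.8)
The plan is to assemble the pieces already established and invoke the $\infty$-contractive case of \cref{thm.main}. In each case the rearrangement group has already been realized as an ESS group with an explicit finitary self-similar tuple: we have $G_n = RG_{\mathcal{D}_n} = E_{\mathcal{D}_n}\mathbb{G}_{\mathcal{D}_n}$ and $T_A = RG_\mathcal{A} = E_\mathcal{A}\mathbb{G}_\mathcal{A}$, where the identifications with the rearrangement groups hold precisely because $\mathbb{G}_{\mathcal{D}_n}$ and $\mathbb{G}_\mathcal{A}$ are finitary.

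First I would check that the groups of each self-similar tuple have type $F_\infty$. For the dendrites the tuple consists of the trivial group and the cyclic group $\langle \phi_n \rangle$ of order two, and for the airplane it consists of the trivial group together with the finite cyclic group $\langle \phi_\infty \rangle$; since finite groups always have type $F_\infty$, this hypothesis is met in both cases.

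Next I would recall that the replacement systems $\mathcal{D}_n$ and $\mathcal{A}$ are $\infty$-contractive with respect to these tuples, which is exactly the content of \cref{lem.dendrites} and \cref{lem.airplane}. With $\infty$-contractivity and type-$F_\infty$ building blocks in hand, the $\infty$-contractive clause of \cref{thm.main} immediately yields that $E_{\mathcal{D}_n}\mathbb{G}_{\mathcal{D}_n}$ and $E_\mathcal{A}\mathbb{G}_\mathcal{A}$ have type $F_\infty$, and therefore so do $G_n$ and $T_A$.

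I do not anticipate any genuine obstacle in the corollary itself: the substantive work has already been discharged by the two preceding lemmas, whose delicate combinatorial estimates count the parallel simple $\pi$-contractions available in each graph expansion. The corollary proper reduces to confirming that the hypotheses of \cref{thm.main} are satisfied and reading off the conclusion, so the only care needed is to keep track of the identifications $RG_{\mathcal{D}_n} = E_{\mathcal{D}_n}\mathbb{G}_{\mathcal{D}_n}$ and $RG_\mathcal{A} = E_\mathcal{A}\mathbb{G}_\mathcal{A}$ that let the ESS-group statement transfer to the rearrangement groups.
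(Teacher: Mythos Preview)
Your proposal is correct and matches the paper's approach exactly: the paper does not even give a separate proof of this corollary, since the identifications $RG_{\mathcal{D}_n} = E_{\mathcal{D}_n}\mathbb{G}_{\mathcal{D}_n}$ and $RG_\mathcal{A} = E_\mathcal{A}\mathbb{G}_\mathcal{A}$, the finiteness (hence type $F_\infty$) of the groups in the tuples, and the $\infty$-contractivity from \cref{lem.dendrites,lem.airplane} are all established in the text preceding the lemmas and the corollary, leaving only a direct invocation of \cref{thm.main}.
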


It was stated in \cite{BF19}, but not proved, that $T_A$ has type $F_\infty$, so this results fills the gap.
For dendrite rearrangement groups, this answers \cite[Question 3.7]{dendrite}.

In general, one can attempt to apply this technique to rearrangement groups whenever a replacement system $\R$ admits a finitary self-similar tuple $\mathbb{G}$ that features some element that permutes non-trivially the boundary vertices of a cell (such as $\phi_n$ and $\phi_\infty$ from our examples).

\subsection{ESS Groups of Wa\.zewski Dendrites}
\label{sub.dendrites}

By \cref{lem.dendrites}, dendrite replacement systems $\mathcal{D}_n$ are $\infty$-contractive if the self-similar tuple has $\langle \phi_n \rangle$ in its blue entry.
Thus, by \cref{thm.main}, the finiteness property of an ESS group $E_{\mathcal{D}_n} (\langle \phi \rangle, G_{\text{\textcolor{red}{r}}})$ of homeomorphisms of a dendrite $D_n$ is at least that of $G_{\text{\textcolor{red}{r}}}$.
This is also the case for the airplane by \cref{lem.airplane}, but dendrites provide particularly flexible examples because of the graph structure of their replacement graphs.
Indeed, for each $n \geq 2$, if $G$ is a self-similar group acting on the rooted $n$-regular tree, one can have a copy $G_{\text{\textcolor{red}{r}}}$ of $G$ acting on the cells of $D_{n+1}$ which correspond to words without instances of the symbol $1$.

As an example, consider the replacement system $\mathcal{D}_3$ and the self-similar tuple $\mathbb{G}=(\langle \phi_3 \rangle, G)$, where $\phi_3$ defined in \cref{sub.phin} and $G$ is a copy of the first Grigorchuk group (famously generated by involutions $\{a,b,c,d\}$) acting on the words in $\{2,3\}$.
Since the elements of $G$ do not permute the boundary points of red cells, the $\pi$-contractions are the same that we described in \cref{sub.finiteness.rrg.gps}, so the replacement system is $\infty$-contractive with the self-similar tuple $\mathbb{G}$.
\cref{fig.grig.diagrams} portrays a diagram and two of its expansions (the subscripts of symbols was dropped to improve clarity, as they do not provide information that is not already encoded).

\begin{figure}
\centering
\begin{subfigure}[t]{\textwidth}\centering
\begin{tikzpicture}[font=\footnotesize]
    \begin{scope}[xshift=-2.75cm]
    \node[vertex] (l) at (180:2) {};
    \node[vertex] (c) at (0:0) {};
    \node[vertex] (t) at (90:2) {};
    \node[vertex] (r) at (0:2) {};
    \node[vertex] (rc) at (0:1) {};
    \node[vertex] (rt) at ($(0:1)+(90:1)$) {};
    \draw[edge,red] (c) to node[above]{$1a$} (l);
    \draw[edge,red] (c) to node[above left]{$2$} (t);
    \draw[edge,blue] (rc) to node[below]{$31$} (c);
    \draw[edge,red] (rc) to node[left]{$32$} (rt);
    \draw[edge,red] (rc) to node[below]{$33b$} (r);
    \end{scope}
    \draw[-to] (-.15,0) -- node[above]{$\f$} (.15,0);
    \begin{scope}[xshift=2.75cm]
    \node[vertex] (l) at (180:2) {};
    \node[vertex] (lc) at (180:1) {};
    \node[vertex] (lb) at ($(180:1)+(270:1)$) {};
    \node[vertex] (c) at (0:0) {};
    \node[vertex] (t) at (90:2) {};
    \node[vertex] (r) at (0:2) {};
    \draw[edge,red] (c) to node[above left]{$\f(32)$} (t);
    \draw[edge,red] (c) to node[above]{$\f(33)$} (r);
    \draw[edge,blue] (lc) to node[above]{$\f(31)$} (c);
    \draw[edge,red] (lc) to node[right]{$\f(2)$} (lb);
    \draw[edge,red] (lc) to node[above]{$\f(1)$} (l);
    \end{scope}
    \node[align=center] at (6.3,0) {$\pi_{31} = (1\,2)$};
\end{tikzpicture}
\end{subfigure}
\\
\
\\
\begin{subfigure}[t]{\textwidth}\centering
\begin{tikzpicture}[font=\footnotesize]
    \begin{scope}[xshift=-2.75cm]
    \node[vertex] (l) at (180:2) {};
    \node[vertex] (lb) at ($(180:1)+(270:1)$) {};
    \node[vertex] (lc) at (180:1) {};
    \node[vertex] (c) at (0:0) {};
    \node[vertex] (t) at (90:2) {};
    \node[vertex] (r) at (0:2) {};
    \node[vertex] (rc) at (0:1) {};
    \node[vertex] (rt) at ($(0:1)+(90:1)$) {};
    \draw[edge,red] (lc) to node[above]{$13$} (l);
    \draw[edge,red] (lc) to node[left]{$12$} (lb);
    \draw[edge,blue] (lc) to node[above]{$11$} (c);
    \draw[edge,red] (c) to node[above left]{$2$} (t);
    \draw[edge,blue] (rc) to node[below]{$31$} (c);
    \draw[edge,red] (rc) to node[left]{$32$} (rt);
    \draw[edge,red] (rc) to node[below]{$33b$} (r);
    \end{scope}
    \draw[-to] (-.15,0) -- node[above]{$\f$} (.15,0);
    \begin{scope}[xshift=2.75cm]
    \node[vertex] (l) at (180:2) {};
    \node[vertex] (lc) at (180:1) {};
    \node[vertex] (lb) at ($(180:1)+(270:1)$) {};
    \node[vertex] (llc) at (180:1.5) {};
    \node[vertex] (llb) at ($(180:1.5)+(270:.5)$) {};
    \node[vertex] (c) at (0:0) {};
    \node[vertex] (t) at (90:2) {};
    \node[vertex] (r) at (0:2) {};
    \draw[edge,red] (c) to node[above left]{$\f(32)$} (t);
    \draw[edge,red] (c) to node[above]{$\f(33)$} (r);
    \draw[edge,blue] (lc) to node[above]{$\f(31)$} (c);
    \draw[edge,red] (lc) to node[below right]{$\f(2)$} (lb);
    \draw[edge,red] (llc) to node[above,xshift=-.25cm]{$\f(12)$} (l);
    \draw[edge,blue] (llc) to node[above]{$\f(11)$} (lc);
    \draw[edge,red] (llc) to node[below left]{$\f(13)$} (llb);
    \end{scope}
    \node[align=center] at (6.3,0) {$\pi_{31} = (1\,2)$};
\end{tikzpicture}
\end{subfigure}
\\
\
\\
\begin{subfigure}[t]{\textwidth}\centering
\begin{tikzpicture}[font=\footnotesize]
    \begin{scope}[xshift=-2.75cm]
    \node[vertex] (l) at (180:2) {};
    \node[vertex] (c) at (0:0) {};
    \node[vertex] (t) at (90:2) {};
    \node[vertex] (r) at (0:2) {};
    \node[vertex] (rc) at (0:1) {};
    \node[vertex] (rrc) at (0:1.5) {};
    \node[vertex] (rrt) at ($(0:1.5)+(90:.5)$) {};
    \node[vertex] (rt) at ($(0:1)+(90:1)$) {};
    \draw[edge,red] (c) to node[above]{$1a$} (l);
    \draw[edge,red] (c) to node[above left]{$2$} (t);
    \draw[edge,blue] (rc) to node[below]{$31$} (c);
    \draw[edge,red] (rc) to node[left]{$32$} (rt);
    \draw[edge,red] (rrc) to node[right]{$332a$} (rrt);
    \draw[edge,blue] (rrc) to node[below]{$331$} (rc);
    \draw[edge,red] (rrc) to node[below,xshift=.125cm]{$333c$} (r);
    \end{scope}
    \draw[-to] (-.15,0) -- node[above]{$\f$} (.15,0);
    \begin{scope}[xshift=2.75cm]
    \node[vertex] (l) at (180:2) {};
    \node[vertex] (lc) at (180:1) {};
    \node[vertex] (lb) at ($(180:1)+(270:1)$) {};
    \node[vertex] (c) at (0:0) {};
    \node[vertex] (t) at (90:2) {};
    \node[vertex] (rc) at (0:1) {};
    \node[vertex] (rt) at ($(0:1)+(90:1)$) {};
    \node[vertex] (r) at (0:2) {};
    \draw[edge,red] (c) to node[above left]{$\f(32)$} (t);
    \draw[edge,red] (rc) to node[right]{$\f(332)$} (rt);
    \draw[edge,blue] (rc) to node[below]{$\f(331)$} (c);
    \draw[edge,red] (rc) to node[below]{$\f(333)$} (r);
    \draw[edge,blue] (lc) to node[above]{$\f(31)$} (c);
    \draw[edge,red] (lc) to node[right]{$\f(2)$} (lb);
    \draw[edge,red] (lc) to node[above]{$\f(1)$} (l);
    \end{scope}
    \node[align=center] at (6.3,0) {$\pi_{31} = (1\,2)$};
\end{tikzpicture}
\end{subfigure}
\caption{A diagram of $E_{\mathcal{D}_n} (\langle \phi_3 \rangle, G)$ and two expansions.}
\label{fig.grig.diagrams}
\end{figure}
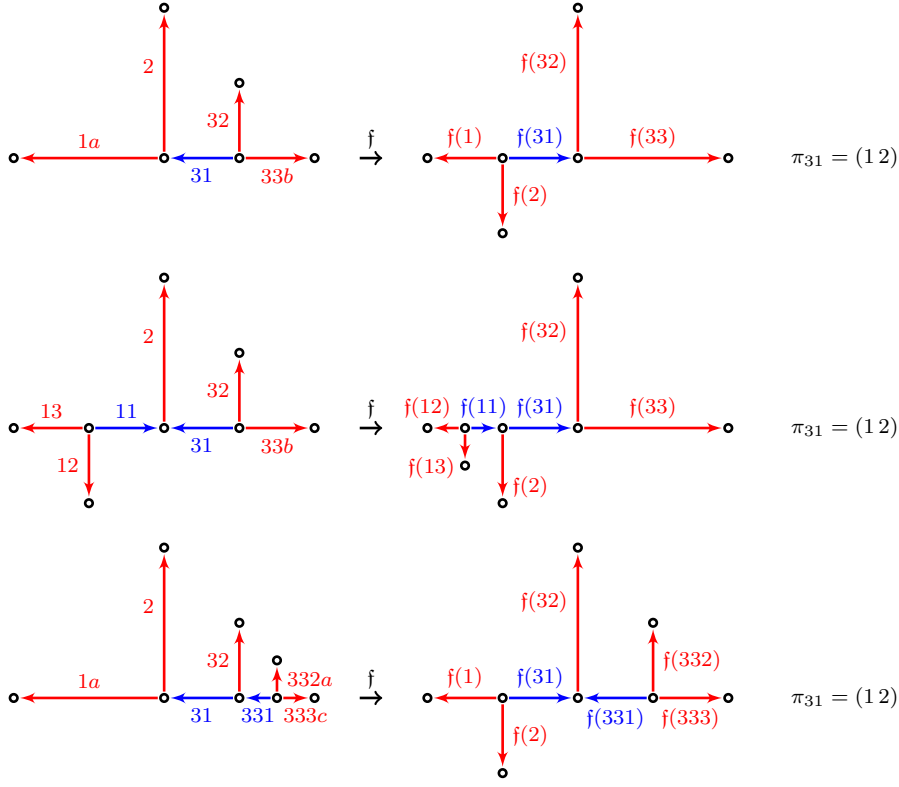

As an immediate consequence of \cref{thm.main}, we have the following fact. 

\begin{proposition}
The group $E_{\mathcal{D}_{3}} \mathbb{G}$ is finitely generated.
\end{proposition}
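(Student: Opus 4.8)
The plan is to obtain the statement as a direct application of \cref{thm.main} to the replacement system $\mathcal{D}_3$ and the tuple $\mathbb{G} = (\langle \phi_3 \rangle, G)$. Since finite generation is exactly type $F_1$, it suffices to produce a single value of $m$ for which the Main Theorem yields type $F_{\lfloor m/d \rfloor}$ with $\lfloor m/d \rfloor \geq 1$. First I would record the finiteness properties of the two groups of the tuple. The group $\langle \phi_3 \rangle$ is cyclic of order $2$ (as noted in \cref{sub.phin}), hence finite and so of type $F_\infty$, in particular of type $F_1$. The group $G$ is a copy of the first Grigorchuk group, which is finitely generated by its four standard involutions $a, b, c, d$; finite generation is precisely type $F_1$. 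Thus every group of $\mathbb{G}$ has type $F_1$.

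Next I would invoke the $\infty$-contractivity of $\mathcal{D}_3$ for this tuple, which is already justified in the preceding discussion: because the elements of $G$ fix the boundary points of red cells, the simple $\pi$-contractions available for $\mathbb{G}$ coincide with those computed for the rearrangement tuple in \cref{lem.dendrites}, so $\mathcal{D}_3$ is $\infty$-contractive with $\mathbb{G}$. In particular it is $m$-contractive for $m = d$, where $d$ is the maximal number of hyperedges in a replacement graph of $\mathcal{D}_3$; here each of $R_{\text{\textcolor{blue}{b}}}$ and $R_{\text{\textcolor{red}{r}}}$ has $3$ edges, so $d = 3$. Applying \cref{thm.main} with $m = d$, the hypotheses are that $\mathcal{D}_3$ is $d$-contractive and that every group of $\mathbb{G}$ has type $F_{\lfloor d/d \rfloor} = F_1$, both of which hold; the conclusion is that $E_{\mathcal{D}_3}\mathbb{G}$ has type $F_{\lfloor d/d \rfloor} = F_1$, i.e., is finitely generated.

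The main point to be careful about, rather than a genuine obstacle, is that $F_1$ is the strongest conclusion one can draw from this tuple: the Grigorchuk group is finitely generated but not finitely presented, so it is not of type $F_2$, and \cref{thm.main} cannot be pushed past $F_1$ here no matter how contractive the system is. The only mathematical content beyond bookkeeping is the transfer of $\infty$-contractivity from the rearrangement setting, which rests on the observation that the Grigorchuk generators do not permute the boundary points of red cells and therefore leave the $\pi$-contraction combinatorics of \cref{lem.dendrites} unchanged; once this is granted, the proof is a direct substitution into the Main Theorem.
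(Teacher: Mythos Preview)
Your proposal is correct and matches the paper's approach: the paper states the proposition as ``an immediate consequence of \cref{thm.main}'', and your argument fills in exactly the details that justification requires---finite generation of both groups in the tuple, the transfer of $\infty$-contractivity from \cref{lem.dendrites} via the observation that $G$ fixes the boundary points of red cells, and the computation $d=3$ so that $m=d$ yields $F_1$.
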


The analogous construction with Thompson group $V$, known as R\"{o}ver group (see \cite{Rover}), has type $F_\infty$ by \cite{RoverFinfty}, so it makes sense to ask the following questions.

\begin{question}
Is $E_{\mathcal{D}_3} \mathbb{G}$ finitely presented?
Is it $F_\infty$?
\end{question}

In general, using higher degree dendrite replacement systems $\mathcal{D}_n$, one can have any self-similar group on the $(n-1)$-regular rooted tree act on red edges.
Thus, for any $m \in \mathbb{N}$, using the self-similar groups provided by \cite[Theorem 6.13]{SWZ19}, one can build groups of type $F_m$ acting on dendrites.
However, determining whether these groups are sharply $F_m$ is beyond the scope of this paper.

\subsection{ESS Groups of Edge Shifts}
\label{sub.Matui}

In \cite{Matui} Matui showed that the topological full group of an irreducible (one-sided) edge shift has type $F_\infty$. Matui's proof starts by using results by Matsumoto \cite{Matsumoto10,Matsumoto13} to show that, up to a group isomorphism, we can make the following assumption on the edge shift.

\begin{assumption}
\label{ass.Matui}
For each color $i$, the replacement graph $R_i$ includes exactly an edge of every color $j \neq i$ and at least $2$ edges colored by $i$.
\end{assumption}

Edge shifts satisfying this assumption are $\infty$-contractive.
Indeed, let $L$ be the maximum amount of $i$-colored edges in each $R_i$.
Suppose that an expansion $\Gamma$ admits at most $m$ parallel contractions and fix an instance of those $m$ contractions.
Let $n$ be the number of total edges of $\Gamma$ and $r$ the number of edges that are not involved in the contractions.
Each contraction involves at most $L+k-1$ edges, so
\[ r \geq n - m(L+k-1). \]
The remaining edges cannot include more than $L$ edges of each color, or there would be another parallel contraction, violating the maximality of $m$, so
\[ r \leq kL. \]
Putting the two inequalities together, we have
\[ n \leq kL+m(L+k-1). \]
Hence, for every fixed $m$, there is a bound on the amount of edges of $\Gamma$, meaning that there are only finitely many graph expansions that allow at most $m$ reductions, for each $m \in \mathbb{N}$, as needed.

\begin{remark}
\label{rmk.bad.shift}
Note that the Matui's assumption is needed as it is straighforward to show that the edge shift described in \cref{ex.bad.shift} is not $\infty$-contracting.
\end{remark}

By virtue of the previous paragraph and exploiting \cref{thm.main}, we have a generalization of Matui's theorem in our context.

\begin{corollary}
Let $G$ be a a topological full group of an irreducible (one-sided) edge shift and let $\mathcal{R}$ be the edge shift satisfying \cref{ass.Matui}. If $\mathcal{G}$ is a compatible self-similar tuple where all the groups are of type $F_{n}$, then $E_{\mathcal{R}} \mathbb{G}$ is of type $F_{n}$.
\end{corollary}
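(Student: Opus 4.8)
The plan is to deduce the statement directly from \cref{thm.main}, the only substantial ingredient being the $\infty$-contractivity recorded in the preceding paragraph. First I would observe that, because the limit space here is an edge shift, the gluing relation is trivial; as noted after \cref{prop.barycentric.subdivision}, in this situation simple $\pi$-contractions coincide with ordinary simple contractions. Hence the edge-counting argument carried out just before the corollary for any edge shift satisfying \cref{ass.Matui}, which bounds the total number of edges of an expansion admitting at most $m$ parallel contractions by $kL + m(L+k-1)$ (where $k$ is the number of colors and $L$ the maximal number of same-colored edges in a replacement graph), is precisely a statement about parallel simple $\pi$-contractions. Since this bound is independent of the self-similar tuple, it shows that $\mathcal{R}$ is $m$-contractive for every $m \in \mathbb{N}$, that is, $\infty$-contractive, for any compatible self-similar tuple $\mathbb{G}$.

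Next I would fix the target finiteness degree. Let $d$ be the maximal number of edges among the replacement graphs of $\mathcal{R}$ and set $m = nd$. Since $\mathcal{R}$ is $\infty$-contractive it is in particular $(nd)$-contractive, and by hypothesis every group of $\mathbb{G}$ has type $F_n = F_{\lfloor nd/d \rfloor}$. Applying \cref{thm.main} with this value of $m$ then gives that $E_\mathcal{R}\mathbb{G}$ has type $F_{\lfloor nd/d \rfloor} = F_n$, which is exactly the claim.

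No genuinely new work is needed beyond these two steps, so there is no real obstacle; the only point worth checking is that the contractivity bound does not depend on $\mathbb{G}$, which is immediate because the edge-counting argument uses only the combinatorics of the replacement graphs together with the parallelism condition. The hypothesis that $G$ is a topological full group plays no role in the argument itself: it serves to frame the result as a generalization of Matui's theorem, which is recovered as the special case $\mathbb{G} = \mathds{1}$, where $E_\mathcal{R}\mathds{1} = RG_\mathcal{R}$ is the topological full group of the shift.
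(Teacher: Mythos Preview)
Your proposal is correct and matches the paper's approach exactly: the paper's entire proof is the sentence ``By virtue of the previous paragraph and exploiting \cref{thm.main},'' and you have simply unpacked this, correctly noting that the edge-counting argument yields $\infty$-contractivity independently of $\mathbb{G}$ (ordinary contractions being $\pi$-contractions for any tuple) and then choosing $m = nd$ to extract the precise finiteness degree from \cref{thm.main}. Your observation that the topological-full-group hypothesis on $G$ is purely contextual is also accurate.
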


In the case in which a self-similar groupoid from \cite{Deaconu} is a self-similar tuple (i.e., when there are no isomorphisms between different colors), this provides a criterion for addressing questions posed in \cite{Deaconu} after Corollary 5.5.

\begin{question}
As mentioned at the beginning of this subsection, Matsumoto's results \cite{Matsumoto10,Matsumoto13} allow us to substitute the replacement system with an $\infty$-contractive one without changing the given topological full group.
Does this also provide isomorphisms of ESS groups?
\end{question}


\section*{Acknowledgements}

The authors are thankful to Francesco Matucci for helpful advices, to Jim Belk and Bradley Forrest for kindly providing their image of the airplane limit space and to Rachel Skipper for useful discussions about finiteness properties.


\printbibliography[heading=bibintoc]

\end{document}